\newcommand{\A}{\ensuremath{\mathbb{A}}}
\newcommand{\cA}{\ensuremath{\mathcal{A}}}
\newcommand{\C}{\ensuremath{\mathbb{C}}}
\newcommand{\sD}{\ensuremath{\mathscr{D}}}
\newcommand{\cF}{\ensuremath{\mathcal{F}}}
\newcommand{\G}{\ensuremath{\mathbb{G}}}
\newcommand{\cH}{\ensuremath{\mathcal{H}}}
\newcommand{\rH}{\ensuremath{\mathrm{H}}}
\newcommand{\bL}{\ensuremath{\mathbb{L}}}
\newcommand{\sM}{\ensuremath{\mathscr{M}}}
\newcommand{\cO}{\ensuremath{\mathcal{O}}}
\newcommand{\bP}{\ensuremath{\mathbb{P}}}
\newcommand{\cP}{\ensuremath{\mathcal{P}}}
\newcommand{\Q}{\ensuremath{\mathbb{Q}}}
\newcommand{\cT}{\ensuremath{\mathcal{T}}}
\newcommand{\cU}{\ensuremath{\mathcal{U}}}
\newcommand{\Z}{\ensuremath{\mathbb{Z}}}
\newcommand{\cZ}{\ensuremath{\mathcal{Z}}}
\newcommand{\iso}{\cong}
\newcommand{\into}{\hookrightarrow}
\newcommand{\CH}{\ensuremath{\mathrm{CH}}}
\newcommand{\Smk}{\mathrm{Sm_k}}
\newcommand{\alt}{\mathrm{alt}}
\DeclareMathOperator{\Cell}{Cell}
\DeclareMathOperator*{\colim}{colim}
\DeclareMathOperator{\Comod}{Comod}
\DeclareMathOperator{\Ext}{Ext}
\DeclareMathOperator{\gr}{gr}
\DeclareMathOperator{\Gr}{Gr}
\DeclareMathOperator{\Hom}{Hom}
\DeclareMathOperator{\Mod}{Mod}
\DeclareMathOperator{\Pic}{Pic}
\DeclareMathOperator{\Rep}{Rep}
\DeclareMathOperator{\sgn}{sgn}
\DeclareMathOperator{\Spec}{Spec}
\DeclareMathOperator{\Vect}{Vect}
\numberwithin{equation}{subsection}
\numberwithin{figure}{subsection}
\newtheorem{thm}{Theorem}[subsection]
\newtheorem{prop}[thm]{Proposition}
\newtheorem{lemma}[thm]{Lemma}
\newtheorem{cor}[thm]{Corollary}
\newtheorem{conj}[thm]{Conjecture}
\theoremstyle{definition}
\newtheorem{defn}[thm]{Definition}
\newtheorem{rmk}[thm]{Remark}
\newtheorem{eg}[thm]{Example}
\newtheorem{qst}[thm]{Question}
\newtheorem{const}[thm]{Construction}
\newcommand{\NonDraftVersion}{10}
\newcommand{\PaperVersion}{\NonDraftVersion}  
\begin{document}

\title[The motivic fundamental group of $\bP^1-\{0,1,\infty\}$]{The motivic fundamental group of the punctured projective line}
\author{Bertrand J. Guillou}
\address{Department of Mathematics, University of Illinois at Urbana-Champaign, 1409 W. Green Street, Urbana, IL 61801}
\email{bertg@illinois.edu}

\begin{abstract} We describe a construction of an object associated to the fundamental group of $\bP^1-\{0,1,\infty\}$ in the Bloch-Kriz category of mixed Tate motives.
This description involves Massey products of Steinberg symbols in the motivic cohomology of the ground field. 
\end{abstract}
\date{\today}
\maketitle



\tableofcontents










\section{Introduction}



The importance of the algebraic fundamental group of $\bP^1-\{0,1,\infty\}$ has been known for some time:
\begin{quote}
``[A. Grothendieck] m'a aussi dit, avec force, que le compl\'{e}t\'{e} profini $\hat\pi_1$ du groupe fondamental de $X:=\bP^1(\C)-\{0,1,\infty\}$, avec son action de $\mathrm{Gal}(\overline{\Q}/\Q)$ est un objet remarquable, et qu'il faudrait l'\'{e}tudier.'' \qquad\qquad -\cite{DelP1}
\end{quote}
Indeed, Belyi's theorem implies that the canonical action of $\mathrm{Gal}(\overline{\Q}/\Q)$ is faithful. P. Deligne initiated the study of this fundamental group from the motivic viewpoint in \cite{DelP1}. The theory of motives is a theory of cohomology, and one knows from rational homotopy theory that 
one does not expect to recover arbitrary fundamental groups from cohomological information. Rather, one expects only to recover their nilpotent completions. Thus we seek to describe the quotients $\Q[\pi_1(X)]/I^n$, where $I$ denotes the augmentation ideal. Deligne and Goncharov have recently (\cite{DG}) used their description of this motivic fundamental group to deduce bounds on the dimensions of $\Q$-vector spaces generated by multiple zeta values. We give a description of the motivic torsor of paths in the Bloch-Kriz-May setup of mixed Tate motives. 

\vspace{4mm}
Let $X$ be a topological space and fix $a,b\in X$. The diagonal map on $X$ makes it into a (cocommutative) comonoid, and $X$ coacts on the two chosen basepoints, meaning that we have maps $a:*\to *\times X$ and $b:*\to X\times *$. We can thus form the two-sided cobar construction $coB^\bullet(a,X,b)$.  Recall that this is a cosimplicial space given in degree $n$ by 
\[coB^n(a,X,b)=\,\!_a X^n_b:=\{a\}\times X^n\times\{b\}\iso X^n.\]
The first and last coface maps $\,\!_aX^n_b\to\,\!_a X^{n+1}_b$ use the coaction of $X$ on $a$ and $b$, respectively, while the remaining coface maps are defined using the diagonal on $X$. The codegeneracies are given by projections.

The totalization of this cosimplicial space gives a model for the space $\,_b\cP_a$ of paths in $X$ from $a$ to $b$. 
Let $\,_bP_a=\pi_0(\,_b\cP_a)$ be the set of homotopy classes of paths, and note that it is a $\pi_1(X,b)-\pi_1(X,a)$-bimodule. Let $I$ denote the augmentation ideal in, say, $\Q[\pi_1(X,b)]$, and suppose $a\neq b$. Following Beilinson, Deligne and Goncharov (loc. cit.,  Proposition~3.4) show, under some finiteness hypotheses on $X$, that (the dual of) $\Q[\,_bP_a]/I^{n+1}$ is given by the relative cohomology group $\rH^n(X^n,\text{cofaces})$. The complex
\begin{equation}
C^*(X^n)\to C^*(a\times X^{n-1})\oplus 
\dots\oplus C^*(X^{n-1}\times b)\to\dots\to C^*(pt)^{\oplus^{n+1}}
\label{theComplex}\end{equation}
calculates this relative cohomology group.

The point is that this description can quite readily be mimicked in the motivic context. The definition of the cosimplicial scheme goes through without any changes, and one wants to study the above complex, replacing the singular cochains $C^*$ by a complex computing motivic cohomology. Since the coface maps in the cosimplicial scheme are given by closed inclusions (in particular, they are not flat maps), Bloch's cycle complex will not do. Rather, we must use a functorial model for motivic cochains. The above complex then defines an object of the derived category $\sD(\cA)$, where $\cA$ is the dga of motivic cohains on $\Spec k$. Using the notation of \S\ref{KMSect}, Deligne and Goncharov show 
that it lies in $\sD(\cA)^{\geq 0}$. Restricting now to $X=\bP^1-\{0,1,\infty\}$, we will see that it also lies in $\sD(\cA)^{\leq 0}$, so that it lies in the intersection $\cH$ and thus defines a mixed Tate motive in the sense of Kriz and May.
Concretely, our main task will be to describe a cellular approximation of 
this complex in the case $X=\bP^1-\{0,1,\infty\}$.

The paper is organized as follows. The sections  \ref{MotCohomSec} and \ref{MTMSec} serve as background and fix some notations. More specifically, \S\ref{MotCohomSec} reviews relevant properties of motivic cohomology and specifies a model for motivic cochains, while \S\ref{MTMSec} reviews the theory of mixed Tate motives, discussing in particular the approaches of Bloch-Kriz (\cite{BK}) and Kriz-May (\cite{KM}). Section~\ref{CellSec} contains the body of the paper. In \S\ref{n2case}, we begin to consider the problem in the case $n=2$. Here we see that the Steinberg relation in motivic cohomology (see \S\ref{ConjPicSec}) allows one to build the cell module. More precisely, the data needed to construct the cell module is a cochain bounding the Steinberg symbols $[a]\cdot[1-a]$. This study is continued in \S\ref{n3case}, where we consider the case $n=3$. Already here we see that a higher Steinberg relation is required to hold--namely, certain Massey products must contain zero. For this reason, we briefly review Massey products in \S\ref{Masseysec}. Finally, we state and prove the general answer in \S\ref{gensec}; the main result is that the extra data required to build the cell module are cochains bounding certain $n$-fold Massey products in the motivic cohomology of $\bP^1-\{0,1,\infty\}$. Strictly speaking, sections \ref{n2case} and \ref{n3case} are not necessary in order to understand the results in \S\ref{gensec}, though the reader may find it helpful to have these lower-dimesional examples worked out explicitly. 
In \S\ref{TrimFatSect}, we find minimal models for these cell modules; this yields objects of the Bloch-Kriz category of mixted Tate motives.
Finally, we close in \S\ref{hightotsec} and \S\ref{34MasseyProds} with some discussion of bounding cochains for these Massey products in certain special cases.



I am very grateful to Spencer Bloch for bringing this material to my attention and for many helpful discussions. This work owes much to the letter \cite{B}; indeed, much of section~\ref{n2case} comes directly from that source. I would also like to thank Dan Isaksen for his encouragement and advice and Igor Kriz and Marc Levine for useful conversations. The guidance and support of my advisor, Peter May, was a very strong motivating force, and I wish to give him my heartfelt thanks. This work was completed as part of my 2008 PhD thesis.




\newpage


\section{Motivic Cohomology}\label{MotCohomSec}

\begin{quote}``Imagine a world in which the $K$-theory $K^*(X)$ of a topological space $X$ had been defined, but the ordinary cohomology groups $\rH^*(X)$ had not yet been discovered \dots  Present day algebraic geometry is such a world.'' 
\begin{flushright} - \cite{BMS} \end{flushright}
\end{quote}

The above quote is from the mid 1980's, and in fact present day algebraic geometry is no longer ``such a world.'' There are now a number of constructions of a so-called ``motivic cohomology'' theory. These are known to agree when working over a perfect ground field. In this section we discuss some properties and constructions of motivic cohomology.

\subsection{The conjectural picture}\label{ConjPicSec}


We will give some constructions of motivic cohomology below, but first we describe some of the structure (both known and desired). We will work always in the category $\Smk$ of smooth varieties over a field $k$. For any abelian group $A$, motivic cohomology gives a contravariant functor 
\[\rH^{*,*}(-;A):\Smk^{op}\to \Gr\Gr Ab\]
to bigraded abelian groups. When $A=R$ is moreover a commutative ring, then $\rH^{*,*}(X;R)$ is a graded-commutative ring (the first grading contributes a sign to commutativity, but the second grading does not). For any $X$, the groups $\rH^{p,q}(X;\Z)$ vanish for $q<0$ or $p>2q$.
See Figure~\ref{CohomPtFig} for the conjectural picture of the motivic cohomology of a point.



\begin{figure}
\begin{center}
\ifthenelse{\PaperVersion=\NonDraftVersion}{
\psset{xunit=8ex}
\begin{pspicture}(-3,-3)(5,5)
\pspolygon[fillstyle=solid,fillcolor=lightgray](-3,-3)(-3,0)(5,0)(5,-3)
\pspolygon[fillstyle=solid,fillcolor=lightgray](1,0)(1,1)(2,1)(2,2)(3,2)(3,3)(4,3)(4,4)(5,4)(5,0)
\pspolygon[fillstyle=solid,fillcolor=lightgray](-3,0)(-3,2)(1,2)(1,1)(0,1)(0,0)
\pspolygon[fillstyle=vlines](-3,2)(-3,5)(1,5)(1,2)
\psgrid[subgriddiv=0,griddots=8,gridlabels=7pt](0,0)(-3,-3)(5,5)
\psaxes[linewidth=1pt,%
  	ticks=none,%
  	labels=none]{->}(0,0)(-3,-3)(5,5)
\uput{.5}[45](0.2,0){\large$\Z$}
\uput{.5}[45](1.1,.9){\large$k^\times$}
\uput{.1}[45](2,2.1){$K_2^M(k)$}
\uput{.1}[45](3,3.1){$K_3^M(k)$}
\uput{.1}[45](4,4.1){$K_4^M(k)$}
\put(4.1,-1.2){\huge$0$}
\put(-3.2,-1.2){\huge$0$}
\put(-2.5,2.7){\huge$0$?}

\end{pspicture} }{} 
\caption{$\rH^{*,*}(\Spec k)$}\label{CohomPtFig}
\end{center}
\end{figure}

Part of this conjectural picture is the

\begin{conj}[Beilinson-Soul\'{e} Vanishing Conjecture]\label{BSConj}
\[\rH^{p,q}(\Spec{k},\Z)=0 \quad \text{ if $p<0$ or if $p=0$ and $q>0$.}\]
\end{conj}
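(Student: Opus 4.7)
The statement is the celebrated \emph{Beilinson--Soul\'e vanishing conjecture}, which remains open in full generality; thus any proof proposal here can only outline strategies and record what is known. The plan is to translate the statement into a concrete assertion about Bloch's cycle complex, note the cases in which it has been verified, and identify why the general case is hard.

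First I would reformulate. Using the identification $\rH^{p,q}(\Spec k;\Z)\cong \CH^q(\Spec k, 2q-p)$ via Bloch's higher Chow groups, the conjecture becomes the claim that, for $q\geq 0$,
\[ \CH^q(\Spec k, n) = 0 \quad \text{for all } n > 2q, \]
together with $\CH^q(\Spec k, 2q) = 0$ for $q>0$. The first bound says that the cycle complex $z^q(\Spec k, \bullet)$ is acyclic in homological degrees above $2q$; the second says that the ``top'' class group vanishes in positive weight. A natural first attack is thus to show directly that an admissible codimension-$q$ cycle in $\A^n_k$ can be moved off the faces and then nullhomotoped once $n>2q$, via a moving lemma of Bloch--Levine type. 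Over a general base field, however, admissibility with respect to all faces is exactly what one cannot easily control, and no such direct argument is known.

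Second I would bring in $K$-theory. The Friedlander--Suslin/Levine motivic spectral sequence
\[ E_2^{p,q} = \rH^{p-q,-q}(\Spec k;\Z) \Longrightarrow K_{-p-q}(k) \]
degenerates rationally and expresses $K_n(k)_\Q$ as $\bigoplus_q \rH^{2q-n,q}(\Spec k;\Q)$. The vanishing conjecture (rationally) is therefore equivalent to Beilinson's assertion that $K_n(k)_\Q$ is concentrated in Adams weights $q$ with $n\leq 2q$, with the extreme weight appearing only for $n=0$. This is in fact how the conjecture is verified in the known cases: for finite fields it follows from Quillen's computation of $K_\ast(\F_q)$, and for number fields from Borel's computation of $K_\ast(F)\otimes\Q$, where in both cases the Adams weight decomposition is explicit and fits within the desired bound.

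The hard part will be any genuinely \emph{general} strategy, and here the proposal is necessarily speculative. A serious attempt would have to either (i) exhibit a nontrivial $t$-structure on an a priori triangulated category of mixed Tate motives before the conjecture is known --- which is essentially circular in the Kriz--May framework to be reviewed in \S\ref{MTMSec} --- or (ii) produce an independent motivic weight filtration on $K_\ast(k)_\Q$ strong enough to force the concentration in the stated range. Both routes hit the same obstruction: there is no tool available, beyond $K$-theory itself, that separates the Adams eigenspaces over an arbitrary field. For this reason the Beilinson--Soul\'e conjecture is invoked rather than proved in what follows; the relevance for the present paper is simply that Conjecture~\ref{BSConj} is what guarantees the heart $\cH$ discussed in \S\ref{MTMSec} is a genuine abelian category in which our cellular objects can live.
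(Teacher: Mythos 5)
This statement is the Beilinson--Soul\'e conjecture, which the paper states as an open conjecture and never proves; you correctly recognize this, and your reformulation via higher Chow groups, the rational comparison with $K$-theory, and the known cases (finite fields via Quillen, number fields via Borel) match exactly the facts the paper itself records in \S\ref{ConjPicSec} and Example~\ref{CohomNumFldEg}. No gap to report: the conjecture is invoked as a hypothesis throughout the paper, just as you say.
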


The conjectured vanishing is represented in Figure~\ref{CohomPtFig} by diagonal lines. The gray, shaded region is known to vanish. 

An important property of motivic cohomology is homotopy invariance: for \mbox{$X\in\Smk$,} if $p:X\times\A^1\to X$ denotes the projection, then the induced map
\[\rH^{*,*}(X;\Z)\xrightarrow{p^*}\rH^{*,*}(X\times\A^1;\Z)\]
is an isomorphism.

Another important property of motivic cohomology is its relationship to algebraic $K$-theory. There is an isomorphism
\[K_n(X)\otimes \Q \iso\bigoplus_q\rH^{2q-n,q}(X;\Q).\]
The space $\rH^{p,q}(X;\Q)$ can be recoved from $K_n(X)\otimes \Q$ as an eigenspace with respect to Adams operations. Moreover, one has integrally an Atiyah-Hirzebruch spectral sequence (\cite{FS}); when tensored with $\Q$, it degenerates at $E_2$ to yield the above isomorphism.

Finally, the groups $\rH^{p,q}(X;\Z)$ are completely understood for $q=0,1$. When $q=0$, the only nonzero group is $\rH^{0,0}(X;\Z)$, which is the group of locally constant $\Z$-valued functions on $X$. In particular, for $X$ connected one has $\rH^{0,0}(X;\Z)\iso\Z$. When $q=1$, the only nonzero groups are
\[\rH^{1,1}(X;\Z)\iso\cO^*(X),\] 
the group of invertible functions on $X$, and 
\[\rH^{2,1}(X;\Z)\iso\Pic(X),\]
the group of lines bundles on $X$. Of prime importance in \S\ref{CellSec} will be the Steinberg relation: given $f\in\cO^*(X)$ such that $1-f$ is also invertible on $X$, one has
\addtocounter{thm}{1}
\begin{equation} [f]\cdot[1-f]=0  \tag{\thethm}  \label{SteinbergRltn}\end{equation}
in $\rH^{2,2}(X;\Z)$. 

\begin{eg}\label{CohomNumFldEg} One of the instances in which one has a complete picture is in the case of number fields. Let $[k:\Q]<\infty$. Borel (\cite{Bor}) computed the rationalized $K$-groups $K_i(k)_\Q:=K_i(k)\otimes\Q$. They are given by
\[K_i(k)_\Q\iso\left\{\begin{array}{ll} \Q & i=0, \\ k^\times\otimes\Q & i=1, \\ \Q^{r_1+r_2} & i\equiv1\pmod4, \quad i>1, \\ \Q^{r_2} & i\equiv3\pmod4,\\ 0 & i\equiv0,2\pmod4, \quad i>1,\end{array}
\right.\]
where $r_1$ ($r_2$) denotes the number of real (resp. complex) embeddings of $k$. Moreover, it is only the summand $\rH^{1,n}(k;\Q)$ that contributes towards $K_{2n-1}(k)_\Q$. Note that this implies that the Beilinson-Soul\'{e} conjecture (\ref{BSConj}) holds for number fields.
\end{eg}

\begin{eg}\label{CohomQEg}
The above result says that, in particular, the nonzero rational $K$-groups of $\Q$ are $K_0(\Q)_\Q=\Q$, $K_1(\Q)=\Q^\times\otimes\Q$, and $K_i(\Q)=\Q$ for $i=5,9,13,\dots$. These correspond to the picture for $\rH^{p,q}(\Q;\Q)$ given by Figure~\ref{CohomQFig}. The answer for $\rH^{p,q}(\Z;\Q)$ is the same, except that $\rH^{1,1}(\Z;\Q)=0$.
\end{eg}

\begin{figure}
\begin{center}
\ifthenelse{\PaperVersion=\NonDraftVersion}{
\psset{xunit=6ex}
\begin{pspicture*}(-2,-1)(5,6)
\psgrid[subgriddiv=0,griddots=8,gridlabels=12pt](0,0)(-2,-1)(5,6)
\psaxes[linewidth=1pt,%
  	ticks=none,%
  	labels=none]{->}(0,0)(-2,-1)(5,6)
\uput{.5}[45](0.1,0){\large$\Q$}
\uput{.5}[45](1,.8){\large$\Q^\times_\Q$}
\uput{.1}[45](1.2,3.1){\large$\Q$}
\uput{.1}[45](1.2,5.1){\large$\Q$}

\end{pspicture*} }{} 
\caption{$\rH^{*,*}(\Spec \Q;\Q)$}\label{CohomQFig}
\end{center}
\end{figure}

\subsection{Bloch's cycle complex}\label{BlchCyclSect}

One of the first constructions deserving to be called motivic cohomology is due to S. Bloch.
Recall the algebraic standard cosimplicial scheme $\Delta^\bullet$: we define $\Delta^n\subseteq\A^{n+1}$ to be the closed subvariety defined by
\[\Delta^n=\{(x_0,\dots,x_n)\in\A^{n+1}\mid \sum_i x_i=1\}.\]
The usual formulae for the face maps and degeneracy maps from topology make $\Delta^\bullet$ into a cosimplicial scheme. Also, note that $\Delta^n\iso\A^n$.

\begin{defn} For $Y$ a quasi-projective variety over a field $k$, \textbf{Bloch's cycle complex} is defined as follows. Let 
$z^q(Y,n)$ be the free abelian group on the codimension $q$ subvarieties of $Y\times\Delta^n$ which meet all faces 
\[Y\times\Delta^m\subseteq Y\times\Delta^n\]
 properly, meaning that the intersection is either empty or of codimension $q$. We will call such cycles {\bf admissible}. For fixed $q$, the groups $z^q(Y,\bullet)$ form a simplicial abelian group and thus a chain complex in the usual way. Writing 
\[N^p(Y)(q)=z^q(Y,2q-p),\]
this makes $N^*(Y)(q)$ into a cochain complex.
The cohomology groups of this cochain complex, denoted $\mathrm{CH}^q(Y,*)$, are called the \textbf{higher Chow groups} of $Y$ of codimension $q$. More precisely, 
\[\mathrm{CH}^q(Y,p)=\rH^{2q-p}(N^*(Y)(q)).\]
\end{defn}

\subsubsection{Cubical variant}

For the purpose of defining products, it is more convenient to work with a cubical variant. Let $\square^n=(\A^1)^n\iso\A^n$. For $1\leq j\leq n$ and $\epsilon\in\{0,1\}$, there is an inclusion $i_{j,\epsilon}:\square^{n-1}\into\square^n$ given by inserting $\epsilon$ in the $j$th coordinate. In addition, for each $1\leq j\leq n-1$, there is a projection $\pi_j:\square^n\to\square^{n-1}$ which omits the $j$th coordinate. 

\begin{defn} Let $\tilde{N}^p(Y)^c(q)$ be the free abelian group on the codimension $q$ subvarieties of $Y\times\square^{2q-p}$ which meet all faces 
\[Y\times\square^m\subseteq Y\times\square^{2q-p}\]
 properly. Let 
\[D^p(Y)(q)\subseteq\tilde{N}^p(Y)^c(q)\]
 be the subgroup of ``degenerate cycles'', i.e., the sum of the images of the projections $\pi_j$. Then we define the \textbf{cubical Bloch complex} to be 
\[N^p(Y)^c(q)=\tilde{N}^p(Y)^c(q)/D^p(Y)(q).\]
\end{defn}

The following proposition says that the cubical cycle complexes also compute the higher Chow groups.

\begin{prop}[\cite{L1}, Thm.~4.7; \cite{BK}, Prop.~5.1]There is a canonical quasi-isomorphism 
\[N^p(Y)^c(q)\xrightarrow{\sim}N^p(Y)(q).\]
\end{prop}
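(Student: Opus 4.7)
The plan is to construct an explicit comparison map $\psi:N^*(Y)^c(q)\to N^*(Y)(q)$ and then show it is a quasi-isomorphism by a bicomplex argument, with a moving lemma supplying the required transversality.

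First, I would build $\psi$ from the classical triangulation of the $n$-cube into $n!$ simplices indexed by permutations $\sigma\in S_n$ (one for each linear ordering of the cubical coordinates). Algebraically each triangulating piece is given by an affine-linear map $\Delta^n\to\square^n$, and one forms $\psi$ as the alternating sum over $\sigma$ with coefficient $\sgn(\sigma)$ of pullbacks along these maps. The checks are then (i) that admissible cubical cycles pull back to admissible simplicial cycles, so that proper intersection with faces is preserved (here faces $y_i=0,1$ of $\square^n$ must be matched to faces $t_j=0$ of $\Delta^n$ via the triangulation); (ii) that $\psi$ commutes with the differentials, the alternating structure matching the cubical $(\partial^0_i-\partial^1_i)$ differential against the simplicial alternating face sum; and (iii) that cycles pulled back from a projection $\pi_j$ become degenerate on the simplicial side, so the map descends from $\tilde N^*(Y)^c(q)$ to the normalized quotient $N^*(Y)^c(q)$.

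Second, to establish that $\psi$ is a quasi-isomorphism, I would introduce a hybrid bicomplex whose $(a,b)$-term consists of admissible codimension $q$ cycles on $Y\times\Delta^a\times\square^b$ (meeting all products of faces properly), equipped with the two independent simplicial and cubical differentials. Filtering by cubical degree and collapsing the simplicial direction via the standard contraction of $\Delta^\bullet$ to a point (applied fiberwise in the cubical variables) identifies the total cohomology with $\rH^*(N^*(Y)^c(q))$; filtering the other way similarly identifies it with the higher Chow groups $\CH^q(Y,*)$. The map $\psi$ factors through this bicomplex, so these identifications force it to induce isomorphisms on cohomology.

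The main obstacle is the proper intersection hypothesis, which is not preserved under arbitrary pullbacks and is what prevents a purely formal comparison. Making the contractions above act at the level of cycles requires that representatives be moved into sufficiently general position with respect to all relevant face stratifications; this is the role of the moving lemma used in both \cite{L1} and \cite{BK}. The genuine technical content of the proposition is this transversality argument, and everything else reduces to careful bookkeeping with faces and signs.
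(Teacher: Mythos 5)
The paper offers no proof of this proposition --- it is imported wholesale from \cite{L1} and \cite{BK} --- so your sketch can only be judged on its own terms, and there it has a genuine gap at the very first step. The facets of a triangulating simplex $\{0\le y_{\sigma(1)}\le\cdots\le y_{\sigma(n)}\le 1\}$ are not only the two cubical faces $y_{\sigma(1)}=0$ and $y_{\sigma(n)}=1$ but also the $n-1$ diagonal hyperplanes $y_{\sigma(i)}=y_{\sigma(i+1)}$, and cubical admissibility gives no control whatsoever over intersections with these diagonals. So your check (i) fails: an admissible cubical cycle can meet a diagonal improperly, or even lie inside one. For instance the irreducible curve $\{y_1=y_2,\ y_1(y_1-1)y_3=1\}\subseteq\square^3$ is a non-degenerate admissible element of $N^1(\Spec k)^c(2)$ (it meets only the face $y_3=1$, in two points), yet it is contained in the hyperplane $y_1=y_2$, so its pullback along any triangulating simplex having that diagonal as a facet lies entirely inside a face $t_j=0$ of $\Delta^3$ and is not admissible there. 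Consequently $\psi$, as you define it, does not land in $N^*(Y)(q)$: the problem is not merely that the quasi-isomorphism argument needs a moving lemma, but that the comparison map itself is undefined on the full cubical complex.

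The repair --- which is where the real content of the cited results lies --- is to restrict first to the subcomplex of cubical cycles in good position with respect to the diagonals (equivalently, with respect to the entire face stratification of the triangulation), prove by a moving lemma that this subcomplex includes quasi-isomorphically into $N^*(Y)^c(q)$, and only then define the triangulation map there; the ``canonical quasi-isomorphism'' of the statement is then realized by a zig-zag, or a map in the derived category, rather than by a chain map defined on every admissible cycle. Your closing paragraph names the right tool but deploys it one step too late, only for the contraction in the bicomplex argument. Once the well-definedness of $\psi$ is handled this way, the remaining items are of the right shape: the sign bookkeeping in (ii), the verification in (iii) that cubical degeneracies die (which requires a cancellation over pairs of permutations, not just a face-matching), and the hybrid bicomplex in which the ``contraction of $\Delta^\bullet$'' is really homotopy invariance of the higher Chow groups, itself a consequence of Bloch's moving lemma.
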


\subsubsection{Products}

If $W\subseteq X\times\square^m$ and $Z\subseteq Y\times\square^n$ meet all faces properly, then the product 
\[W\times Z\subseteq X\times\square^m\times Y\times\square^n\iso X\times Y\times\square^{m+n}\]
also meets all faces properly. Thus we get a product
\[N^{2p-m}(X)^c(p)\otimes N^{2q-n}(Y)^c(q)\to N^{2(p+q)-(m+n)}(X\times Y)^c(p+q)\]
inducing a product
\[\CH^p(X,m)\otimes\CH^q(Y,n)\to\CH^{p+q}(X\times Y,m+n).\]
Although the cycle complexes only have contravariant functoriality with respect to {\em flat} maps, the higher Chow groups nevertheless have contravariant functoriality with respect to {\em all} morphisms when the codomain is smooth (\cite{B1}, Thm.~4.1). Thus, if $X$ is smooth, one can pull back along the diagonal $X\into X\times X$ to make $\CH^*(X,*)$ into a bigraded ring. Moreover, this product is graded commutative (\cite{B1}, Cor.~5.7) with respect to the homological, or cubical, grading (as opposed to the grading by codimension).

\subsubsection{Alternating cycle complex}\label{altcyclsec}
When $Y=\Spec{k}$, the cubical complex $N^p(Y)^c(q)$ yields a homotopy-
commutative dga. After tensoring with $\Q$, one can produce an honest cdga by restricting to {\em alternating cycles}, as we explain below.

Note that the symmetric group $\Sigma_n$ acts on $\square^n$ and therefore on $N^{2q-n}(Y)^c(q)$. One can then consider the subgroup of cycles which are alternating with respect to this action, that is, the cycles $Z$ such that 
\[\sigma(Z)=\sgn(\sigma)\cdot Z\] 
for all $\sigma\in\Sigma_n$. In fact, one can show (\cite{B2}, Lemma~1.1) that the alternating cycles are closed under the differential, so that the alternating cycles form a subcomplex. However, the product of alternating cycles need no longer be alternating. One can rectify this, by projecting onto the subspace of alternating cycles, but it is necessary to pass to $\Q$ coefficients. Let $\mathrm{alt}_n\in\Q[\Sigma_n]$ be the element
\[\mathrm{alt}_n=\frac1{n!}\sum_{\sigma\in\Sigma_n}\sgn{(\sigma)}\cdot\sigma.\]
Then, letting
\[A^p(Y)(q)\subseteq N^p(Y)^c(q)\otimes\Q\]
be the vector subspace of alternating cycles, mutliplication with $\mathrm{alt}_{2q-p}$ gives a projection
\[\mathrm{alt}_{2q-p}:N^p(Y)^c(q)\otimes\Q\to A^p(Y)(q).\]
We now define a product by the composition
\[A^m(X)(p)\otimes A^n(Y)(q)\to N^{m+n}(X\times Y)(p+q)\xrightarrow{\mathrm{alt}}A^{m+n}(X\times Y)(p+q).\]
When $X=Y=\Spec k$, this makes $A=A^*(\Spec k)(*)$ into a commutative dga by construction.

\begin{defn} We will define (rational) motivic cohomology to be 
\[\rH^{p,q}(X;\Q):=\rH^p(A^*(X)(q)).\]
\end{defn}

These motivic cohomology groups can be shown to agree with the higher Chow groups, using homotopy invariance of the latter.

\begin{prop}[\cite{BK}, Proposition~5.1] 
\[\rH^{p,q}(X;\Q)\iso \rH^{p}(N^*(X)^c(q)\otimes\Q)\iso \mathrm{CH}^q(X,2q-p)\otimes\Q.\]
\end{prop}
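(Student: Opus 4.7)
The plan is to prove each of the two displayed isomorphisms separately. The second isomorphism
\[
\rH^p(N^*(X)^c(q)\otimes\Q)\iso\CH^q(X,2q-p)\otimes\Q
\]
is essentially a consequence of the quasi-isomorphism $N^*(Y)^c(q)\xrightarrow{\sim}N^*(Y)(q)$ already quoted (\cite{L1}, Thm~4.7) from the preceding subsection: since $\CH^q(X,2q-p)=\rH^p(N^*(X)(q))$ by definition, and since tensoring a quasi-isomorphism of chain complexes of abelian groups with the flat $\Z$-module $\Q$ produces another quasi-isomorphism, this part requires nothing beyond invoking the cited cubical-versus-simplicial comparison.

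The substantive content is the first isomorphism
\[
\rH^p(A^*(X)(q))\iso\rH^p(N^*(X)^c(q)\otimes\Q),
\]
which I would prove by showing that the inclusion $\iota\colon A^*(X)(q)\into N^*(X)^c(q)\otimes\Q$ is itself a quasi-isomorphism. The first step is to verify that the averaging operator $\mathrm{alt}_n\in\Q[\Sigma_n]$, acting via the coordinate permutation action of $\Sigma_n$ on $\square^n$, defines a chain map $\pi\colon N^*(X)^c(q)\otimes\Q\to A^*(X)(q)$ that retracts $\iota$. This reduces to checking that the cubical differential $d=\sum_{j,\epsilon}(-1)^{j+\epsilon}i_{j,\epsilon}^*$ commutes with the $\Sigma_n$-action after passage to the quotient by degenerate cycles, which is a bookkeeping argument: a transposition permutes pairs of face maps in a way that is compensated by the sign in $d$, modulo degeneracies.

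Once $\pi$ is a chain-level retraction of $\iota$, the complex $N^*(X)^c(q)\otimes\Q$ splits as a direct sum $A^*(X)(q)\oplus\ker(\pi)$, and it suffices to show that $\ker(\pi)$ is acyclic. Since $\mathrm{alt}_n$ is the idempotent projector onto the sign representation in $\Q[\Sigma_n]$, this is equivalent to the assertion that $\Sigma_n$ acts on $\rH^p(N^*(X)^c(q)\otimes\Q)$ through the sign character, or equivalently that each simple transposition $\tau_j=(j,j+1)$ acts as $-\id$ on cohomology. I would establish this via an $\A^1$-homotopy: the involution $t\mapsto 1-t$ on a single coordinate of $\square$ swaps the two faces, can be joined to the identity by a linear interpolation in $X\times\square^n\times\A^1$, and hence acts trivially on cohomology by $\A^1$-homotopy invariance; composing with the swap of two adjacent coordinates then realizes $\tau_j$ and shows it acts as $-\id$ on $\rH^p$. (One may alternately cite \cite{B2}, Lemma~1.1 for this.)

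The main obstacle I anticipate is precisely the last step: pinning down the sign in the action of $\tau_j$ on cohomology, together with the verification that $\mathrm{alt}_n$ descends to the quotient by the degenerate subcomplex $D^*(X)(q)$ as a chain map. The chain-map property can genuinely fail on the unnormalized cubical complex, so the role of the degenerate-cycle quotient is essential; and the $\A^1$-homotopy argument must be carried out while respecting admissibility of cycles (meeting all faces properly), which requires some care. Once these two technical points are secured, the proposition follows formally from the fact that $A^*(X)(q)$ is the image of the idempotent $\mathrm{alt}_{2q-p}$ and that the complementary summand has been shown to be acyclic.
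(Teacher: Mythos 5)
First, note that the paper does not actually prove this statement: it is imported verbatim as \cite{BK}, Proposition~5.1, with only the one-line remark that the comparison with higher Chow groups uses homotopy invariance of the latter. So your proposal is a from-scratch reconstruction of a result the paper treats as a black box. Your handling of the second isomorphism (cubical versus simplicial, then tensor the quasi-isomorphism with the flat module $\Q$) is correct, and your overall strategy for the first isomorphism --- show $\mathrm{alt}$ is a chain-level idempotent retraction onto $A^*(X)(q)$ and then show the complementary summand is acyclic --- is one of the standard routes.

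The genuine gap is in the pivotal step, the acyclicity of $\ker(\pi)$. Two problems. (i) The reformulation ``$\Sigma_{2q-p}$ acts on $\rH^p(N^*(X)^c(q)\otimes\Q)$ through the sign character'' is not well posed as stated: an individual permutation $\sigma$ acts only on the single term $N^{2q-n}(X)^c(q)$ and is \emph{not} a chain map of the complex (one has $i_{j,\epsilon}^*\circ\sigma^*=(\sigma_j')^*\circ i_{\sigma(j),\epsilon}^*$ with an induced permutation $\sigma_j'$ depending on $j$, and the signs $(-1)^j$ versus $(-1)^{\sigma(j)}$ do not match), so ``acts as $-\id$ on cohomology'' requires a different formulation before it can even be proved. (ii) The proposed justification via ``linear interpolation'' and $\A^1$-homotopy invariance fails. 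The interpolation $t\mapsto(1-s)t+s(1-t)$ does not preserve the faces $\{t=0\},\{t=1\}$ for $0<s<1$, so pulling back admissible cycles along it is not compatible with the cubical differential and yields no chain homotopy; moreover, homotopy invariance of higher Chow groups concerns the scheme variable (the factor $Y$ in $Y\times\square^n$), whereas the coordinate you are deforming is a cubical coordinate, i.e.\ part of the homological direction of the complex, to which that theorem does not apply. Finally, even granting that the flip $t\mapsto 1-t$ ``acts trivially,'' the conclusion that the transposition $\tau_j$ acts by $-\id$ does not follow --- the flip and the transposition are unrelated group elements, and no sign has been produced anywhere in your argument. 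The correct proofs in the literature either triangulate the cube and compare directly with the simplicial complex (Bloch--Kriz, Levine), or identify each cubical degree with a multi-relative Chow group and use the anticommutativity of suspension coordinates; either way this step is a real theorem, not a formal consequence of $\A^1$-invariance, and as written your argument does not establish it.
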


\subsection{Friedlander-Suslin-Voevodsky variant}\label{FSVSect}

As we have already mentioned, the higher Chow groups have good functoriality for maps between smooth schemes, but the cycle complexes we have introduced do not. In this section, we discuss cycle complexes of Friedlander-Suslin-Voevodsky with good functoriality. A similar discussion appears in \S4.2 of \cite{EL}. Assuming resolution of singularities, as do we, they show (\cite{EL}, proof of Lemma~4.2.1) that their presheaf of cdga's agrees with ours up to quasi-isomorphism.

\begin{defn}Given smooth schemes $X$ and $Y$, let $z_{equi}(Y,r)(X)$ denote the free abelian group on the closed and irreducible subvarieties $Z\subseteq X\times Y$ which are dominant and equidimensional of relative dimension $r$ over $X$.
\end{defn}

As is discussed in (\cite{MVW}, 16.1), for fixed $Y$ and $r$ this defines a presheaf in $X$.

\begin{defn}We will write 
\[\cZ^p(Y)(q)=C_{2q-p}(z_{equi}(\A^q,0))(Y).\]
Again, this is the
free abelian group on the (codimension $q$) closed, irreducible subvarieties of $\Delta^{2q-p}\times Y\times\A^q$ which are quasifinite and dominant over $\Delta^{2q-p}\times Y$. For fixed $q$, this defines a cochain complex.
\end{defn}

Since $C_{2q-p}(z_{equi}(\A^q,0))$ determines a presheaf on the category of smooth $k$-schemes $\Smk$ (and in fact an \'{e}tale sheaf), we get functoriality of these complexes with respect to all maps between smooth schemes.

Note that any codimension $q$ cycle $Z\subseteq \Delta^{2q-p}\times Y\times\A^q$ which is quasifinite over $\Delta^{2q-p}\times Y$ must meet the faces $\Delta^m\times Y\times\A^q$ properly, so we have an inclusion
\[\cZ^p(Y)(q)\into N^p(Y\times\A^q)(q).\]
Moreover, homotopy invariance of the higher Chow groups (\cite{B1}, Thm.~2.1) says that the pullback along the projection $Y\times\A^q\to Y$ induces a quasi-isomorphism
\[N^p(Y)(q)\xrightarrow{\sim}N^p(Y\times\A^q)(q).\]
Thus, in order to show that our new complexes compute the higher Chow groups, it remains to show that the inclusion $\cZ(Y)\into N(Y\times\A^q)$ is a quasi-isomorphism. For $Y=\Spec{k}$, this is given by the following theorem of Suslin.

\begin{thm}[\cite{S}, Thm.~2.1]The inclusion 
\[\cZ^*(\Spec{k})(q)\into N^*(\A^q)(q)\]
is a quasi-isomorphism.
\end{thm}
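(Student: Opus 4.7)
First, observe that the inclusion is well-defined. If $Z \subseteq \Delta^{2q-p} \times \A^q$ is closed and irreducible, quasifinite and dominant over $\Delta^{2q-p}$, then $\dim Z = 2q-p$, so $Z$ has codimension $q$, and its intersection with any face $\Delta^m \times \A^q$ remains quasifinite over $\Delta^m$, hence has dimension at most $m$ and codimension at least $q$. Thus $Z$ is admissible in $N^p(\A^q)(q)$, and the displayed inclusion of cochain complexes is a map.

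For the quasi-isomorphism, my plan is to construct a chain-level deformation retract of $N^*(\A^q)(q)$ onto the subcomplex $\cZ^*(\Spec k)(q)$. The key geometric claim is a moving lemma tailored to the projection $\pi\colon \A^q \times \Delta^n \to \Delta^n$: every admissible codimension-$q$ cycle $Z \subseteq \A^q \times \Delta^n$ is rationally equivalent, through cycles in $N^*(\A^q)(q)$, to one quasifinite and dominant over $\Delta^n$. To produce such equivalences I would exploit the homogeneity of $\A^q$ under its affine group. Concretely, a generic family of affine endomorphisms $\mu_t\colon x \mapsto tx + v$ parameterized by $t \in \A^1$ yields a cycle on $\A^1 \times \A^q \times \Delta^n$ whose slices at $t=0$ and $t=1$ compare $Z$ with a cycle having a strictly smaller ``non-quasifinite locus'' in $\Delta^n$. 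Reinterpreting the $\A^1$-family via the standard triangulation of $\Delta^n \times \A^1$ into simplices of dimension $n+1$ converts this family into a chain homotopy.

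The main obstacle is the behavior at $t=0$, where $\mu_t$ degenerates and the naive limit cycle can fail to be admissible or to have the correct codimension. The remedy is a two-step procedure: first apply Bloch's moving lemma (from \cite{B1}, whose functoriality statement is already invoked above) to put every component of $Z$ in generic position with respect to the fibers of $\pi$ and relative to a sufficient collection of auxiliary subvarieties, thereby controlling the dimension of the subset of $\Delta^n$ over which $Z$ is not quasifinite; then induct on this ``defect dimension,'' using the affine homotopy to decrease the defect strictly at each step. A filtration of $N^*(\A^q)(q)$ by defect, together with an associated spectral-sequence argument, reduces the claim to the acyclicity of successive subquotients, each controlled by the moving homotopy. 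The technically hardest point is verifying that the geometric homotopy remains admissible for \emph{all} $t \in \A^1$ simultaneously; this is the delicate step where Noether normalization over $\A^q$ and the affine structure of the target are essential, and it is the genuine core of Suslin's argument.
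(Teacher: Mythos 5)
The paper does not prove this statement at all: it is quoted verbatim from Suslin (\cite{S}, Thm.~2.1) as an external input, so there is no internal argument to compare yours against. Judged on its own terms, your text is a proof \emph{plan} rather than a proof, and the plan defers precisely the step that constitutes Suslin's theorem. The assertion that a generic family of affine maps yields an admissible cycle on $\Delta^n\times\A^1\times\A^q$ whose slices at $t\neq 0$ are quasifinite over $\Delta^n$ (or at least have strictly smaller defect) \emph{is} Suslin's generic equidimensionality theorem; you name it as ``the genuine core'' and stop there, so the gap is not incidental but central. Two further points would sink the sketch as written even if that core were granted. First, the family $\mu_t(x)=tx+v$ degenerates at $t=0$ to a constant map, so the $t=0$ slice is not the original cycle $Z$ but its collapse; the family one actually needs is of translation type (the identity at one endpoint), so that the triangulation of $\Delta^n\times\A^1$ produces a chain homotopy between $Z$ and its moved version. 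Second, the appeal to Bloch's moving lemma from \cite{B1} is off target: that lemma arranges proper intersection with the faces $\Delta^m\subseteq\Delta^n$ (and is what underlies contravariant functoriality of the higher Chow groups), but it says nothing about quasifiniteness over the simplex and does not by itself bound the dimension of the non-quasifinite locus, which is the quantity your induction and filtration-by-defect spectral sequence require.

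Your opening paragraph, checking that the inclusion is well defined, is correct and matches the remark the paper itself makes just before stating the theorem. If you want to turn the rest into an actual argument, the honest route is to reproduce Suslin's moving construction: fix the cycle, choose a generic vector of polynomial maps $\Delta^n\to\A^q$ of sufficiently high degree, form the associated one-parameter family of translations, prove admissibility of the total family and quasifiniteness of the generic slice by a dimension count over each face, and only then run the homotopy/acyclicity-of-quotient argument. As it stands, the proposal correctly describes the architecture of that proof but does not supply it.
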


Note that, generalizing what we discussed above, there is an inclusion of groups
\[C_{2q-p}(z_{equi}(Y\times \A^q,r))(\Spec{k})\into N^{p+2\dim{Y}-2r}(Y\times\A^q)(q-\dim{Y}+r)\]
Specializing to $r=\dim{Y}$, this gives
\[C_{2q-p}(z_{equi}(Y\times \A^q,\dim{Y}))(\Spec{k})\into N^p(Y\times\A^q)(q)\]
Assuming resolution of singularities, Suslin also proved

\begin{thm}[\cite{S}, Thm.~3.2] If $k$ satisfies resolution of singularities then for any equidimensional quasiprojective scheme $Y$, the natural inclusion
\[C_{2q-p}(z_{equi}(Y\times\A^q,\dim{Y}))(\Spec{k})\into N^p(Y\times\A^q)(q)\]
is a quasi-isomorphism.
\end{thm}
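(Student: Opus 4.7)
The strategy is to reduce the statement to the base case $Y = \Spec k$, which is the previous theorem, via an induction on $d = \dim Y$ in which resolution of singularities provides the main d\'{e}vissage. At each step, I would compare the behavior of the two complexes under a closed/open decomposition of $Y$.

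The first thing I would establish is that both sides of the inclusion sit in compatible localization long exact sequences: given a closed subscheme $W \hookrightarrow Y$ with open complement $U$, one wants a long exact triangle connecting the complexes for $W$, $Y$, and $U$, and one wants the natural inclusion to induce a map between these triangles. For Bloch's complex $N^p(-)(q)$ this is Bloch's localization theorem (\cite{B1}). For the equidimensional complex $C_{2q-p}(z_{equi}(-\times \A^q, \dim Y))(\Spec k)$, the analogous localization is a theorem of Suslin--Voevodsky (\cite{S}), whose proof is precisely where the hypothesis of resolution of singularities intervenes, via the platification technique for repairing equidimensionality after pullback.

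With these tools in hand, the induction on $d$ is then essentially mechanical. In the base case $d = 0$, after reducing to $Y = \Spec k'$ for a finite separable extension $k'/k$ and using $\A^q$-invariance to absorb the extra factor, the claim is exactly the previous theorem. For the inductive step, apply resolution of singularities to produce a smooth dense open $U \subseteq Y$ with complement $W$ of dimension $<d$. The five-lemma, applied to the map of localization sequences and using the inductive hypothesis for $W$, reduces the problem to the case of $Y$ smooth and equidimensional. Covering such a $Y$ by affine opens and invoking Mayer--Vietoris for Zariski covers on both sides (once again available on the equidimensional side via Suslin's theory of pretheories) lets one further assume $Y$ is smooth affine; then Noether normalization $Y \to \A^d_k$ together with homotopy invariance of higher Chow groups (\cite{B1}, Thm.~2.1) and of the $z_{equi}$-complex reduces this to the already-established base case.

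The principal obstacle is the first step: establishing the localization triangle on the equidimensional side in a form strictly compatible with Bloch's on the other side. Equidimensionality is not preserved under arbitrary pullback, so one must arrange it by modifying the ambient space via a suitable blow-up, which is exactly why resolution of singularities is a standing hypothesis. Once localization and Mayer--Vietoris are in place on both sides, the remaining d\'{e}vissage through dimension, smoothness, affineness, and Noether normalization is formal.
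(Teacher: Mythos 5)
This theorem is not proved in the paper at all: it is quoted verbatim as Theorem~3.2 of Suslin's paper \cite{S}, so there is no internal argument to compare yours against. Judged on its own terms, your sketch does capture the genuine skeleton of Suslin's argument: an induction on $\dim Y$ driven by compatible localization sequences, with Bloch's localization theorem on the higher-Chow side and the Friedlander--Suslin--Voevodsky localization theorem for the $z_{equi}$-complexes on the other side, the latter being exactly where resolution of singularities enters. The identification of the base case with the previous theorem (Suslin's Thm.~2.1, which holds over an arbitrary field and hence over the residue fields appearing in dimension $0$) is also correct.

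There is, however, a concrete gap in your final reduction. After arriving at $Y$ smooth and affine, you propose to finish by ``Noether normalization $Y\to\A^d_k$ together with homotopy invariance.'' Noether normalization produces a \emph{finite surjective} map $Y\to\A^d$, and neither Bloch's complex nor the equidimensional complex is invariant under such maps; a finite surjection induces transfers, not quasi-isomorphisms, so knowing the statement for $\A^d$ gives you nothing for $Y$ by this route. Homotopy invariance only collapses an honest $\A^1$-bundle projection. The standard way to close the induction (and the one consistent with Suslin's argument) is a continuity/generic-point step: both complexes commute with the filtered colimit over dense open subsets $U\subseteq Y$, and the colimit is the corresponding complex over $\Spec k(Y)$, where Theorem~2.1 applies directly; Bloch's and Friedlander--Voevodsky's localization sequences together with Noetherian induction on the closed complements then yield the statement for $Y$ itself. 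If you replace the Noether-normalization step by this limit argument, your outline becomes a faithful reconstruction of the cited proof.
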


The duality theorem of Friedlander and Voevodsky reads

\begin{thm}[\cite{FV}, Thm.~7.4] If $k$ satisfies resolution of singularities, $Y$ is a smooth quasiprojective $k$-scheme, and $X$ is a $k$-scheme if finite type, then for any $r$ the inclusion
\[z_{equi}(X,r)(Y)\into z_{equi}(Y\times X,r+\dim{Y})(\Spec{k})\]
induces a quasi-isomorphism
\[C_*\bigl(z_{equi}(X,r)\bigr)(Y)\xrightarrow{\sim} C_*\bigl(z_{equi}(Y\times X,r+\dim{Y})\bigr)(\Spec{k}).\]

\end{thm}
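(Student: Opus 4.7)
The plan is to prove that the natural inclusion
\[\Phi_Y:C_*\bigl(z_{equi}(X,r)\bigr)(Y)\to C_*\bigl(z_{equi}(Y\times X,r+\dim Y)\bigr)(\Spec k)\]
is a quasi-isomorphism by combining a routine check that the map is defined, a descent argument in $Y$, and a reduction to a trivial base case via homotopy invariance. First I would verify that $\Phi_Y$ lands where claimed: an element of the source is a cycle $Z\subseteq Y\times\Delta^n\times X$ that is equidimensional of relative dimension $r$ over $Y\times\Delta^n$, and since $Y$ is smooth of pure dimension $\dim Y$, the composition $Z\to\Delta^n$ is automatically equidimensional of relative dimension $r+\dim Y$, so $Z$ represents an element of the target. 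Compatibility with the simplicial structure is immediate from the face/degeneracy formulas, so $\Phi_Y$ is a well-defined map of complexes.

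Second, I would establish that both sides, viewed as functors of $Y\in\Smk$, satisfy Mayer--Vietoris for Zariski (or Nisnevich) covers. The source is already a presheaf on $\Smk$ by construction of $z_{equi}(X,r)$. For the target, a cover $Y=U\cup V$ yields a Mayer--Vietoris sequence via a localization triangle for equidimensional cycle complexes on $Y\times X$; this step is where resolution of singularities enters, as it is needed for the relevant cdh-descent. Standard descent arguments then reduce us to the case in which $Y$ is affine.

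Third, by Noether normalization I would pick a finite surjection $\pi\colon Y\to\A^{\dim Y}$ and use pushforward along $\pi$ (which is available because $\pi$ is finite) combined with pullback to reduce the statement for $Y$ to the statement for $\A^n$. Iterated homotopy invariance -- on the source a property of homotopy-invariant presheaves with transfers, on the target a consequence of the $\Delta^\bullet$-parameter together with an $\A^1$-contraction on the cycle side -- then brings the problem down to $Y=\Spec k$, where $\Phi_Y$ is the identity.

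The hard part is the geometric input underpinning the descent step: one needs a moving lemma saying that any cycle on $Y\times\Delta^\bullet\times X$ equidimensional over $\Delta^\bullet$ is rationally equivalent in the complex to one that is equidimensional over $Y\times\Delta^\bullet$. This is the technical heart of \cite{FV} and requires the full theory of relative cycles together with resolution of singularities used to compactify and resolve auxiliary varieties that control the deformation; without it the localization/descent machinery in the second step does not assemble. In effect, proving the theorem in detail is essentially tantamount to executing this moving argument.
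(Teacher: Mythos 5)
First, a point of reference: the paper does not prove this statement at all --- it is quoted verbatim from Friedlander--Voevodsky (\cite{FV}, Thm.~7.4) as an external input, so there is no internal proof to compare yours against. Judged on its own terms, your sketch correctly verifies that the map is well defined (the composite $Z\to Y\times\Delta^n\to\Delta^n$ is equidimensional of relative dimension $r+\dim Y$ when $Y$ is smooth of pure dimension $\dim Y$), and it correctly identifies the moving lemma as the technical heart. But the reduction scheme you wrap around that moving lemma is not the one in \cite{FV}, and it does not work as stated. The actual proof first establishes the duality quasi-isomorphism for $Y$ smooth \emph{projective} via the Friedlander--Lawson moving lemma for families, and then passes to quasi-projective $Y$ using the localization/blow-up distinguished triangles in the covariant variable, which is where resolution of singularities enters. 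Your steps two and three replace this with descent to affine $Y$, Noether normalization, and homotopy invariance down to $Y=\Spec k$.

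The gap is in that last reduction, which is essentially circular. To contract the target $C_*\bigl(z_{equi}(\A^n\times X,r+n)\bigr)(\Spec k)$ down to $C_*\bigl(z_{equi}(X,r)\bigr)(\Spec k)$ you need homotopy invariance of the Borel--Moore-type complex in its covariant variable, i.e.\ that $z_{equi}(X,r)\to z_{equi}(\A^1\times X,r+1)$ induces a quasi-isomorphism on $C_*$; but that statement \emph{is} the duality theorem for $Y=\A^1$ (in \cite{FV} it appears as Theorem~8.3 and is deduced from duality, not used to prove it). So the ``$\A^1$-contraction on the cycle side'' you invoke is exactly the content you are trying to establish. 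The Noether-normalization step is also unjustified: a finite surjection $\pi:Y\to\A^{\dim Y}$ gives transfer maps exhibiting the source as a retract up to degree, which can prove injectivity rationally but not a two-sided quasi-isomorphism, and defining compatible transfers on the target side of $\Phi_Y$ already requires moving cycles into good position. Finally, the localization triangle for $z_{equi}(-,s)(\Spec k)$ in the covariant variable that your descent step leans on is itself a hard theorem of \cite{FV} (Thm.~5.11), proved with the same moving-lemma-plus-resolution machinery; it is not available ``off the shelf'' prior to the duality theorem. In short, the skeleton you propose cannot be fleshed out without importing the theorem itself at the $Y=\A^n$ stage.
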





Combining the previous two theorems gives the desired quasi-isomorphism
\[\cZ^p(Y)(q)\xrightarrow{\sim}N^p(Y\times\A^q)(q).\]

\begin{rmk} Again, the above comparison relies on resolution of singularities. In fact, Friedlander and Suslin have shown (\cite{FS}, Corollary~12.2), for any perfect field, that the higher Chow groups are computed by hypercohomology with coefficients in the complexes of sheaves $\cZ^*(q)$. Moreover, (\cite{FV}, Theorem~8.1) show that the ``naive'' cohomology groups used above agree with the hypercohomology groups, but this latter result requires resolution of singularities.
\end{rmk}

\subsubsection{Products}

Now that we have functorial cycle complexes, we want to modify them to yield cdga's after tensoring with $\Q$. 

As above, the first step is to pass from simplicial complexes to cubical complexes. Again (\cite{L2}, Lemma~5.26.1), one can show that the cubical complexes $\cZ^*(Y)^c(*)$ are quasi-isomorphic to the simplicial ones. 

At this point, we have functorial dga's computing the higher Chow groups (assuming resolution of singularities). It only remains to make the dga's commutative. To clarify, the products we are considering are
\[\cZ^m(Y)^c(p)\otimes\cZ^n(Y)^c(q)\to\cZ^{m+n}(Y\times Y)^c(p+q)\xrightarrow{\Delta^*}\cZ^{m+n}(Y)^c(p+q).\]
Recall that $\cZ^m(Y)^c(p)$ is a subgroup of the group of cycles on $\square^{2p-m}\times Y\times\A^p$. There are thus two sources of noncommutativity: the box coordinate $m$ and the affine coordinate $p$. 
As above, we tensor our dga's with $\Q$ and consider the subspaces of cycles which are alternating with respect to permutation of the box coordinates. It can be shown (\cite{L2}, Lemma~5.26.2) that the result is quasi-isomorphic. 

However, this does not yet produce a cdga, since we have the extra affine coordinates. If we now restrict to the cycles which are {\em invariant} under permutation of the affine coordinates, then we finally obtain a cdga. Once again, this restriction does not change the quasi-isomorphism type (\cite{L2}, Lemma~5.27) (this essentially follows from homotopy invariance of the higher Chow groups). To sum up, we have the following presheaf of cdga's over $\Q$ which will serve as a model for the Bloch cycle complexes:


\begin{defn}\label{cAModel}The cdga $\cA^{*,*}(X)$ is defined by
\[\begin{split}
\cA^{p,q}(X)&=(\alt_{2q-p}(\cZ^p(X)^c(q)_\Q))^{\Sigma_q} \\
&=(\alt_{2q-p}(z_{equi}(\A^q,0)(X\times\square^{2q-p})\otimes\Q))^{\Sigma_q}.
\end{split}\]
That is, one takes the $\Q$-vector space generated by varieties $W\subseteq X\times\square^{2q-p}\times\A^q$ which are dominant and quasi-finite over $X\times\square^{2q-p}$. One then restricts to cycles which are alternating with respect to the $\Sigma_{2q-p}$ action on $\square^{2q-p}$. Finally one restricts to cycles which are invariant with respect to the $\Sigma_q$ action on $\A^q$.
\end{defn}

\begin{rmk} Note that the Beilinson-Soul\'{e} conjecture (\ref{BSConj}) for the field $k$ is precisely the statement that the dga $\cA(k)$ is cohomologically connected, meaning that $\rH^{p,q}(A)=0$ if $q=0$ and $p<0$ or $q>0$ and $p\leq 0$.
\end{rmk}

\subsection{Cocycles for functions}

It is known (\cite{NS}, \cite{T}) that
\[\rH^{n,n}(\Spec{k};\Z)\iso\CH^n(\Spec{k},n)\iso K^M_n(k),\]
where the groups on the right are the Milnor $K$-groups. In particular, in accord with \S\ref{ConjPicSec}, this says that 
\[\rH^{1,1}(\Spec{k};\Z)\iso\CH^1(\Spec{k},1)\iso k^\times,\]
the units in the field (this appeared already in \cite{B1}), and that $\CH^2(\Spec{k},2)$ is the quotient of $k^\times\otimes k^\times$ by the subgroup generated by Steinberg elements of the form $a\otimes(1-a)$, with $a\in k - \{0,1\}$. 

We will be interested in producing an explicit map 
\[\cO^\times(\Spec{k})=k^\times\to\cA^{1,1}(\Spec{k}).\]
In fact, it suffices to work with $\cZ^*(\Spec{k})^c(*)$ since we have an explicit projection $\cZ\otimes\Q\to\cA$.

The map $k^\times\to 
N^1(\Spec{k})^c(1)$ is defined by
\[a\mapsto C_a:=Z(t(a-1)-a)\subseteq\square^1.\]
In other words, $C_a$ is merely the point $\frac{a}{a-1}\in\square^1=\A^1$. To get a codimension $1$ cycle in $\square^1\times\A^1$, we pull back along the projection $\square^1\times\A^1\to\square^1$, yielding the cycle $C_a\times\A^1\subseteq\square^1\times\A^1$. Note, however, that $C_a\times\A^1\notin\cZ^1(k)^c(1)$ since this cycle is not quasifinite and dominant over $\square^1$.

\begin{rmk}When $a=1$, we have
\[C_a=Z(t(1-1)-1)=Z(-1)=\emptyset\subseteq\square^1,\]
so that $[C_1]=0\in\CH^1(\Spec{k})$.
\end{rmk}

Recall that the maps
\[\cZ^1(k)^c(1)\into N^1(\A^1)^c(1)\xleftarrow{\pi^*} N^1(k)^c(1)\]
are quasi-isomorphisms. We are looking for a cycle in $\cZ^1(k)^c(1)$ which maps to the class of $C_a\times\A^1$. 

\begin{defn}Denoting the coordinate on $\square^1$ by $x$ and the coordinate on $\A^1$ by $t$, the cycle
\[\Gamma_a:=Z(tx(x-1)-(x(a-1)-a))\subseteq\square^1\times\A^1\]
is quasi-finite and dominant over $\square^1$. This defines a map 
\[\Gamma_{(-)}:k^*\to\cZ^1(k)^c(1).\]
\end{defn}

The cycle $\Gamma_a$ is the graph of the rational function 
\[\varphi(x)=\frac{x(a-1)-a}{x(x-1)}=(a-1)\frac{x-\frac{a}{a-1}}{x(x-1)}.\] 
Moreover, we can see that it maps to the class of $C_a\times\A^1$, as an explicit homotopy $C_a\times\A^1\simeq\Gamma_a$ is given by
\[Z(tx(x-1)h-(x(1-a)-a))\subseteq\square^2\times\A^1\]
(see Figure~\ref{HtpyFig}). Here we are considering $x$ and $h$ as coordinates on $\square^2$. 

\begin{figure}
\ifthenelse{\PaperVersion=\NonDraftVersion}{
\begin{pspicture*}(-75pt,-3.5)(110pt,3.7)
\psset{xunit=45pt}
{\tiny\psaxes[Dx=1,Dy=5,dx=6,dy=6]
{<->}(0,0)(-1.6,-3.5)(2.4,3.7)
\psplot{-3}{-0.13}{x 1.4 sub x div x 1 sub div 2.5 div} 
\psplot{0.02}{0.99}{x 1.4 sub x div x 1 sub div 2.5 div}
\psplot{1.02}{2.7}{x 1.4 sub x div x 1 sub div 2.5 div}  
\psline[linewidth=.05, linestyle=dotted](1,-3.5)(1,3.5)
\psline[linewidth=.07]{->}(0.6,2)(0.6,3)
\psline[linewidth=.07]{->}(-0.7,-1.2)(-1.4,-2)
\psline[linewidth=.07]{->}(2,0.6)(1.7,2)
\psline[linewidth=.07]{->}(1.18,-1.3)(1.25,-2.6)
\uput{.3}[135](1,0){\small1}
\uput[-45](1.4, 0){\small$\frac{a}{a-1}$}
\psdot(1.4,0)
}
\end{pspicture*}\hspace{1.5em}
\begin{pspicture*}(-55pt,-3.5)(80pt,3.7)
\psset{xunit=35pt}
{\tiny\psaxes[dx=6,dy=6]
{<->}(0,0)(-1.6,-3.5)(2.2,3.7)
\psline[linewidth=.05](1.4,-3.5)(1.4,3.5)
\uput[-45](1.4, 0){\small$\frac{a}{a-1}$}
\psdot(1.4,0)
}
\end{pspicture*} }{} 
\caption{The cycle $\Gamma_a$ is deformed to $C_a\times\A^1$.}\label{HtpyFig}
\end{figure}
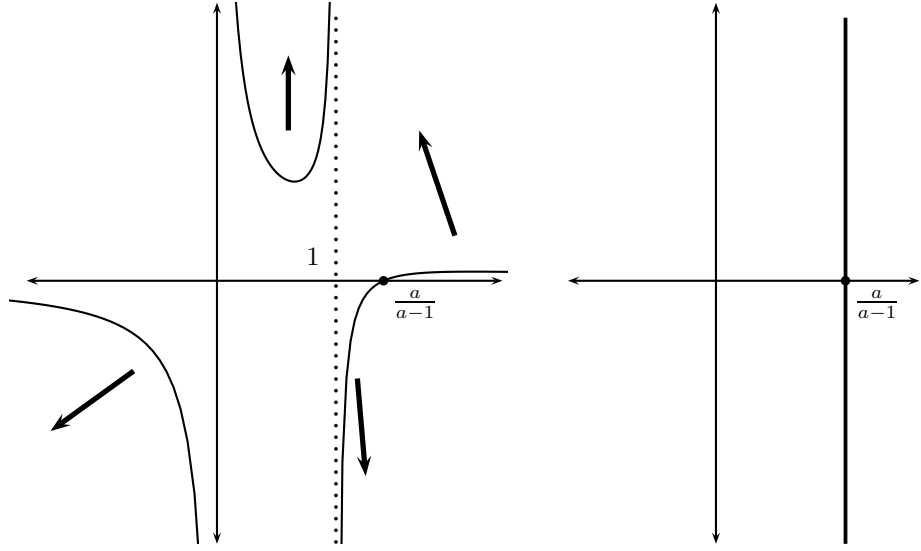


\vspace{3mm}

In fact, one has more generally an isomorphism
\[\rH^{1,1}(Y;\Z)\iso \cO^\times(Y).\]
\begin{defn}We define a map $\cO^\times(Y)\xrightarrow{\Gamma}\cZ^1(Y)^c(1)$ by
\[f\mapsto \Gamma_f:=Z\bigl(tx(x-1)-[x(f(y)-1)-f(y)]\bigr).\]
\end{defn}
As above one can show, by use of an explicit homotopy, that this is compatible with the map $\cO^\times(Y)\to N^1(Y)^c(1)$. We will hereafter write $[f]=\Gamma_f\in\cA^{1,1}(Y)$, as the notation is less cumbersome.

\begin{qst}
What are explicit cycles in $\cA^{1,2}(\Spec{k})$ realizing the Steinberg relations? This, and much more, was done in \cite{T} for the cubical Bloch complex.
\end{qst}

Although we do not know of a particular choice of bounding cochain for $[a]\cdot[1-a]$ in $\cA^{2,2}(k)$ we will use the notation $T_{a,1-a}$ to indiscriminately refer to some Totaro cycle bounding this product.



\section{Mixed Tate Motives}\label{MTMSec}

In this section, 
we review the theory of 
mixed Tate motives. 
In particular, we review the Bloch-Kriz and Kriz-May categories of mixed Tate motives.

\subsection{Mixed Tate categories}\label{MxdTtCatSubsec}

We begin by describing, via the formalism of Tannakian categories,
some of the formal structure expected of a category of mixed Tate motives.

\begin{defn} A {\bf mixed Tate category} is a neutral Tannakian $\Q$-linear category $\sM$ with an invertible object $\Q(1)$ such that 
\begin{enumerate}
\item any simple object is isomorphic to $\Q(m):=\Q(1)^{\otimes m}$ for some $m\in\Z$
\item 
\[\Hom_\sM(\Q(m),\Q(n))=\left\{\begin{array}{ll} \Q & m=n \\ 0 & m\neq n\end{array}\right.\]
\item\label{extproperty}
\[\Ext_\sM^1(\Q(m),\Q(n))=0\]
for $m\geq n$.
\end{enumerate}
\end{defn}

\begin{lemma} Each object $M\in\sM$ has a canonical finite increasing filtration $W_*M$ such that $Gr^W_{-2n}M\iso\oplus\Q(n)$ (and $Gr^W_{-2n+1}M=0$).
\end{lemma}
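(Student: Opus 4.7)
The plan is to define the filtration intrinsically via maximal subobjects with prescribed composition factors, then use the $\Ext^1$ vanishing hypothesis to pin down the structure of the successive quotients. Since $\sM$ is neutral Tannakian, it is abelian with every object of finite length, and the simple factors in any composition series are by axiom (1) of the form $\Q(m_i)$.

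First I would define, for each $n \in \Z$,
\[
W_{-2n}M := \text{ the maximal subobject of $M$ whose composition factors are all of the form $\Q(k)$ with $k \geq n$,}
\]
and set $W_{-2n-1}M := W_{-2(n+1)}M$. To see that a maximal such subobject exists, note that if $A, B \subseteq M$ both have composition factors among $\{\Q(k) : k \geq n\}$, then so does $A+B$, since $(A+B)/B \cong A/(A \cap B)$ is a quotient of $A$ and thus inherits the same bound on weights. As $M$ has finite length, the supremum is attained. By construction the filtration is increasing, canonical, and finite (only finitely many weights appear among the composition factors of $M$).

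Next I would verify that $Gr^W_{-2n} M := W_{-2n}M / W_{-2(n+1)}M$ has all composition factors isomorphic to $\Q(n)$. By definition, the factors lie among $\Q(k)$ with $k \geq n$, so I must rule out $k > n$. If some $\Q(k)$ with $k \geq n+1$ appeared as a subquotient, I would take the maximal subobject $V \subseteq Gr^W_{-2n}M$ with composition factors $\Q(k), k \geq n+1$ (which exists by the same sum argument), and pull it back to a subobject $\widetilde V \subseteq W_{-2n}M$ containing $W_{-2(n+1)}M$. Then $\widetilde V$ itself has composition factors only $\Q(k)$ with $k \geq n+1$, contradicting the maximality of $W_{-2(n+1)}M$ — unless $V = 0$. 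Combined with the fact that any nonzero object with finite composition series has a simple subobject (obtained, if necessary, by using axiom (3) to swap a higher-weight simple to the bottom of a composition series, via the splitting of $0 \to \Q(m) \to E \to \Q(m') \to 0$ whenever $m' \geq m$), this forces all composition factors of $Gr^W_{-2n}M$ to be exactly $\Q(n)$.

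Finally, to upgrade this to a direct sum decomposition, I would induct on the length of $Gr^W_{-2n}M$, using that extensions
\[
0 \to \Q(n) \to E \to \textstyle\bigoplus \Q(n) \to 0
\]
are classified by $\Ext^1_{\sM}(\bigoplus \Q(n), \Q(n)) = 0$, which holds by axiom (3) since $m = n \geq n$. Hence every such extension splits and $Gr^W_{-2n}M \cong \bigoplus \Q(n)$. The odd graded pieces vanish by the convention $W_{-2n-1}M = W_{-2(n+1)}M$. The main obstacle is the weight-separation step: one really needs both the Ext vanishing (to rearrange composition series and show the maximal sub-of-higher-weights exists as a \emph{subobject} rather than merely a subquotient) and a careful use of maximality to prevent higher-weight Tate objects from leaking into $Gr^W_{-2n}M$; once that is in place, the semisimplicity of each graded piece is immediate from $\Ext^1(\Q(n),\Q(n)) = 0$.
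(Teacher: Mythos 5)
The paper states this lemma without proof, so there is nothing to compare against directly; your argument is the standard one and it is correct. Defining $W_{-2n}M$ as the maximal subobject with composition factors $\Q(k)$, $k\geq n$ (which exists by your sum argument plus finite length), and then using axiom (3) both to separate weights and to split the graded pieces, is exactly how this is done in the literature (e.g.\ Levine, Deligne--Goncharov). The one step worth spelling out a bit more is the ``swap to the bottom'' move: when you push a factor $\Q(k')$ with $k'\geq n+1$ past the factor $\Q(k)$ directly beneath it, you need $\Ext^1(\Q(k'),\Q(k))=0$, i.e.\ $k'\geq k$; this holds because, choosing the \emph{lowest} occurrence of a high-weight factor in a composition series of $Gr^W_{-2n}M$, everything below it has weight exactly $n$. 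With that made explicit, the maximality contradiction and the final splitting via $\Ext^1(\Q(n),\Q(n))=0$ go through as you describe.
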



In fact, letting $\sM_{\leq 2n}$ be the full subcategory of objects $M$ such that $\Hom(\Q(i),M)=0$ for $i<-n$, the above defines a functor $W_{2n}:\sM\to\sM_{\leq 2n}$ which is right adjoint to the inclusion $\sM_{\leq 2n}\into \sM$. 

The weight filtration gives a canonical fiber functor $\sM\to{\mathrm Gr\!}\Vect_\Q$ by
\[M\mapsto \bigoplus_n \omega_n(M):= \bigoplus_n \Hom(\Q(-n),\gr^W_{2n}M).\]
Since the Tannaka group of ${\mathrm Gr\!}\Vect_Q$ is $\G_m$, 
the above, together with the inclusion ${\mathrm Gr\!}\Vect_Q\into\sM$ given by $V_n\mapsto V_n\otimes \Q(-n)$, gives a splitting
\[\G_m\into G_{\sM}\twoheadrightarrow \G_m,\]
where $G_{\sM}$ denotes the Tannaka group of $\sM$
Thus we have a decomposition $G_{\sM}\iso \G_m\ltimes\cU$ for some 
$\cU$. 

Furthermore, the $\G_m$-action on $\cU$ means that if $\cU\iso\Spec A$, then $A$ is a graded Hopf algebra. Using that $\gr^W_{2n+2}W_{2n}M=0$, one can show the following result:

\begin{lemma} $A_*$ is a connected (commutative) Hopf algebra, meaning that it is concentrated in non-negative degrees and $A_0=\Q$. 
\end{lemma}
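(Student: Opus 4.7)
The plan is to exploit the semidirect decomposition $G_\sM \iso \G_m \ltimes \cU$: the $\G_m$-conjugation action on $\cU$ induces the grading on $A = \cO(\cU)$, with $f \in A_n$ iff $f(t^{-1}ut) = t^n f(u)$ for all $t \in \G_m$ and $u \in \cU$. Commutativity of $A$ is automatic since $\cU$ is an affine (pro-algebraic) group scheme, so only the two grading claims require argument.

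First I would establish the key structural fact that any $u \in \cU$ preserves the weight filtration $W_\bullet M$ on every $M \in \sM$ and acts as the identity on $\gr^W M$. This is forced by $\cU$ being the kernel of $G_\sM \twoheadrightarrow \G_m$, where the quotient $\G_m$ is the Tannaka group of the semisimple subcategory $\gr^W \sM \simeq \Gr\Vect_\Q$ of graded pieces. Via the fiber functor $\omega(M) = \bigoplus_n \omega_n(M)$, this means $u$ acts on $\omega(M)$ as $I + N$, where the components $N_{ij} : \omega_j(M) \to \omega_i(M)$ vanish unless $i < j$; the stated vanishing $\gr^W_{2n+2} W_{2n}M = 0$ is precisely what guarantees this strict lowering of the weight index.

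Next, a direct computation in the semidirect product shows that conjugation by $t \in \G_m$ scales the block $N_{ij}$ by $t^{i-j}$, so the corresponding matrix-entry function on $\cU$ transforms as $t \cdot N_{ij} = t^{j-i} N_{ij}$ and hence lies in $A_{j-i}$ with $j - i > 0$. As $M$ ranges over all objects of $\sM$, these matrix-entry functions generate $A$ as an algebra by Tannakian reconstruction, so $A$ is concentrated in non-negative degrees.

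Finally, for $A_0 = \Q$, I would show that the fixed-point subgroup $\cU^{\G_m}$ is trivial. Any $u \in \cU$ commuting with every $t \in \G_m$ necessarily preserves each weight component $\omega_n(M)$ individually (commuting with $\G_m$ forces respect for the weight decomposition), while simultaneously, as an element of $\cU$, it acts as the identity on each $\gr^W_{2n} M \iso \omega_n(M) \otimes \Q(-n)$. Combining these, $u$ acts trivially on every object of $\sM$ and hence equals $1 \in \cU$. The main technical subtlety will be phrasing everything carefully in the pro-algebraic setting, where $\cU$ is a projective limit and $A$ an ind-Hopf algebra, but the argument on each finite-dimensional $M$ passes to the limit without issue.
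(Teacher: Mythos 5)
Your argument is correct and is exactly the route the paper intends: the lemma is stated there with no proof beyond the hint that one uses $\gr^W_{2n+2}W_{2n}M=0$, which is precisely the fact you isolate to show that $\cU$ acts by strictly weight-lowering unipotent operators, whence the matrix-entry functions generating $A$ have non-negative $\G_m$-weight. One small remark: your final fixed-point step is redundant and slightly shaky on its own --- once $A$ is generated by the constants together with the strictly positive-degree entries $N_{ij}$, the equality $A_0=\Q$ follows immediately, whereas triviality of $\cU^{\G_m}$ by itself would not give $A_0=\Q$ without the non-negativity you had already established (a $\G_m$-action with weights of both signs can have trivial fixed points yet nonconstant invariant functions).
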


Tannaka theory gives the following description of a mixed Tate category:

\begin{thm} The fiber functor $\omega_*:\sM\to{\mathrm Gr\!}\Vect_\Q$ induces an equivalence of $\otimes$-categories
\[\sM\simeq \Comod(A_*)\]
to the category of finite-dimensional (graded) $A_*$-comodules.
\end{thm}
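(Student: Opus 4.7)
The plan is to invoke standard Tannakian reconstruction for $\sM$ and then to unpack what the extra structure on $G_\sM$ contributes, using the splitting $G_\sM \iso \G_m \ltimes \cU$ already constructed in the excerpt.

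First, forget the grading on $\omega_*$ and take the underlying fiber functor $\omega : \sM \to \Vect_\Q$. Standard Tannakian reconstruction (as in Deligne--Milne) yields a tensor equivalence $\sM \xrightarrow{\sim} \Rep_\Q(G_\sM)$, where $G_\sM = \underline{\mathrm{Aut}}^\otimes(\omega)$ and $\Rep_\Q$ denotes finite-dimensional representations. It remains only to identify $\Rep_\Q(G_\sM)$ with finite-dimensional graded $A_*$-comodules and to verify that this identification is effected by $\omega_*$.

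Second, I would describe representations of $\G_m \ltimes \cU$ concretely. A finite-dimensional representation of this semidirect product on $V$ consists of a $\G_m$-representation on $V$, i.e.\ a grading $V = \bigoplus_n V_n$, together with a $\cU$-action compatible with the $\G_m$-action on $\cU$ built into the semidirect product structure. Since $\cU = \Spec A_*$, a $\cU$-action on $V$ is the same data as a comodule structure $\rho : V \to V \otimes A_*$, and the $\G_m$-compatibility is precisely the statement that $\rho$ is a morphism of graded vector spaces when $A_*$ carries its own grading (which, as noted in the excerpt, is the grading induced by the $\G_m$-conjugation on $\cU$). Hence $\Rep_\Q(G_\sM)$ is equivalent to the category of finite-dimensional graded $A_*$-comodules, and this is evidently compatible with tensor products at every stage.

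Finally, I would check that the composite equivalence is given by $\omega_*$ rather than merely by $\omega$. The cocharacter $\G_m \hookrightarrow G_\sM$ used to construct the splitting was chosen precisely so that the associated grading on $\omega$ recovers the weight grading $\omega_n(M) = \Hom(\Q(-n), \gr^W_{2n} M)$. Consequently the grading inherited by $\omega$ from being a $G_\sM$-representation agrees on the nose with the weight grading of $\omega_*$, so $\sM \simeq \Rep_\Q(G_\sM) \simeq \Comod(A_*)$ is induced by $\omega_*$ itself. The only non-formal step is this identification of the two gradings; but since the splitting in the excerpt was built from the weight filtration, this is really a bookkeeping matter rather than a substantive obstacle. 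Everything else is mechanical translation between affine group schemes, their Hopf algebras, and comodules.
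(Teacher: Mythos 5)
Your proposal is correct and follows exactly the route the paper intends: the paper states this theorem without proof as a consequence of "Tannaka theory," having already constructed the splitting $G_{\sM}\iso\G_m\ltimes\cU$ and the graded Hopf algebra $A_*=\cO(\cU)$, and your argument (Tannakian reconstruction, identification of representations of the semidirect product with graded $A_*$-comodules, and the check that the $\G_m$-grading on $\omega$ is the weight grading because the cocharacter was built from the weight filtration) is the standard filling-in of those details.
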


This result shows that defining a mixed Tate category is equivalent to prescribing a Hopf algebra $A_*$. Below, we describe yet other means of defining such an $\sM$.

Let $M\in\sM$. Since $\cU$ preserves the weight filtration on $\omega(M)$ and acts trivially on $\omega(gr^W_*M)$ (since it acts trivially on $\Q(-n)$ by definition), $\cU$ acts unipotently on $\omega(M)$. Moreover, letting $\langle M\rangle\subseteq\sM$ be the full subcategory of subquotients of sums of copies of $M$,
we have that $\cU\mid_{\langle M\rangle}$ is unipotent. Since $\sM\iso\colim\langle M\rangle$, we deduce that $\cU$ is a pro-unipotent algebraic group.

Let $I=\ker(\epsilon:A_*\to\Q)$ be the augmentation ideal in $A_*$. One can form the ``indecomposable elements'' $QA_*:=I/I^2$. Define a map $\psi:I/I^2\to I/I^2\otimes I/I^2$ by $\psi=\Delta-\tau\circ\Delta$ where $\Delta:A\to A\otimes A$ is the comultiplication and $\tau:A\otimes A\iso A\otimes A$ is the transposition. This makes $I/I^2$ into a coLie algebra, in the sense that the induced structure on $L:=\Hom(I/I^2,\Q)$ is a Lie algebra. The Lie algebra $L$ is the usual Lie algebra of the algebraic group $\cU$. Note that it is concentrated in negative degrees. Recall also that the ``associated Lie algebra'' functor induces an equivalence of categories (over a field of characteristic $0$)
\[\left\{\mbox{unipotent algebraic groups} \right\} \leftrightarrow \left\{ \parbox{10em}{finite dimensional \\ nilpotent Lie algebras}\right\}\]
It follows that $L$ is a pro-nilpotent Lie algebra.

\begin{prop} There is an equivalence of categories $\Comod(A_*)\simeq \Rep(L)$, where in both cases we only allow finite dimensional objects. More generally, one can identify $A_*$-comodules with ``generalized nilpotent'' representations of $L$.
\end{prop}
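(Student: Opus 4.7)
The plan is to construct mutually inverse functors between $\Comod(A_*)$ and $\Rep(L)$ using the duality, standard for connected graded commutative Hopf algebras in characteristic zero, between $A_*$ and the universal enveloping algebra of its indecomposables.

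First I would define a functor $\Phi: \Comod(A_*) \to \Rep(L)$ as follows. For a comodule $(V,\rho)$, write $\rho(v) = v \otimes 1 + \tilde\rho(v)$ with $\tilde\rho(v) \in V \otimes I$, and compose with the quotient $I \twoheadrightarrow I/I^2$ to obtain a map $\bar\rho: V \to V \otimes I/I^2$. For $\ell \in L = \Hom(I/I^2, \Q)$ set $\ell \cdot v := (\mathrm{id}_V \otimes \ell)\bar\rho(v)$. The main verification is that this is a Lie algebra action: coassociativity $(\rho \otimes \mathrm{id})\rho = (\mathrm{id} \otimes \Delta)\rho$, combined with the fact that $\Delta$ on $A_*$ descends modulo decomposables to a map $\bar\Delta: I/I^2 \to I/I^2 \otimes I/I^2 \oplus \text{primitive terms}$, lets one compute $\ell_1(\ell_2 v) - \ell_2(\ell_1 v)$ as $(\mathrm{id} \otimes \ell_1 \otimes \ell_2 - \mathrm{id} \otimes \ell_2 \otimes \ell_1)(\mathrm{id} \otimes \bar\Delta)\bar\rho(v)$, which by the definition of the bracket on $L$ as dual to $\psi = \Delta - \tau\Delta$ equals $[\ell_1,\ell_2] \cdot v$.

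Next I would verify that the resulting $L$-representation is always generalized nilpotent. Any comodule over a coalgebra is the union of its finite-dimensional subcomodules, so it suffices to consider graded finite-dimensional $V$. Since $A_*$ is connected graded in non-negative degrees, $L$ lies in strictly negative degrees, so iterated action of elements of $L$ strictly lowers degree and is eventually zero on each vector. In the finite-dimensional case the action factors through a finite-dimensional nilpotent quotient of $L$; in general, the stabilization of each vector after finitely many steps is precisely the generalized nilpotence condition.

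For the inverse functor I would invoke the identification $A_* \cong U(L)^\vee$ of Hopf algebras (the graded dual), a consequence of Cartier--Milnor--Moore applied dually to the commutative connected Hopf algebra $A_*$. A generalized nilpotent $L$-module $V$ extends to a $U(L)$-module, and one defines the coaction $\rho(v) = \sum_i (u_i \cdot v) \otimes u_i^\vee$, where $\{u_i\}$ is a graded basis of $U(L)$ and $\{u_i^\vee\}$ the dual basis in $A_*$. Generalized nilpotence ensures this sum is finite in each degree so that $\rho$ is well-defined, and the Hopf algebra pairing guarantees coassociativity and counitality. The two constructions are mutually inverse by a direct unwinding of definitions. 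The main obstacle will be making the duality $A_* \cong U(L)^\vee$ precise in the pro-nilpotent (possibly infinite-dimensional) regime and matching up the finiteness conditions on both sides; once the pairing is set up correctly, the remainder of the argument is essentially formal.
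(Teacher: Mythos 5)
The paper states this proposition as a standard fact from Tannakian/Hopf-algebra theory and offers no proof, so there is nothing internal to compare against; your proposal is a reasonable reconstruction of the standard argument. The forward functor is right: reducing the coaction modulo $I^2$ and dualizing does give an $L$-action, the bracket verification via coassociativity and the antisymmetrization $\psi=\Delta-\tau\Delta$ is the correct computation, and the generalized-nilpotence verification via the negative grading of $L$ is fine.

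The one genuine issue is the step you defer to the end, and it is not merely a technicality in this paper's setting. The identification $A_*\iso U(L)^\vee$ via Milnor--Moore requires $A_*$ to be of finite type (finite-dimensional in each degree), and here it is not: $A_1$ contains $k^\times\otimes\Q$, which is infinite-dimensional for essentially every field of interest. Consequently $L_{-1}=\Hom(A_1,\Q)$ is an infinite product, your basis/dual-basis formula $\rho(v)=\sum_i(u_i\cdot v)\otimes u_i^\vee$ produces infinite sums, and worse, a finite-dimensional $L$-module need not arise from any comodule at all unless its action factors through a finite-dimensional nilpotent quotient of $L$ (a discontinuous functional on $\prod_i\Q$ already gives a counterexample). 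The standard repair, and the one implicit in the paper's setup, is to avoid the global duality: by the fundamental theorem of comodules every finite-dimensional comodule factors through a finite-dimensional sub-Hopf-algebra of $A_*$, and dually one must interpret $\Rep(L)$ as representations of the \emph{pro-nilpotent} Lie algebra $L=\lim L/L'$, i.e., those factoring through a finite-dimensional nilpotent quotient; the equivalence is then assembled from the finite-dimensional correspondence between unipotent groups and nilpotent Lie algebras that the paper records just before the proposition. With that restatement your two functors are mutually inverse as you claim, but as written the inverse functor is not well-defined and the equivalence is false for the full linear-dual $\Rep(L)$.
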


\subsection{The approach of Bloch-Kriz}\label{BlchKrzSec}

As we have seen, to describe any mixed Tate category, it suffices to specify either a Hopf algebra or a Lie algebra. This is the approach followed in \cite{BK}, which we now review. The main idea, expounded already in \cite{B2}, is that one should consider the coLie algebra corresponding to the $1$-minimal model for (a connected version of) the Bloch cycle complex. This idea is improved upon in \cite{BK} in that the authors give a convenient construction of the $1$-minimal model. Their construction does {\it not} require the crucial Beilinson-Soul\'{e} vanishing conjecture. 

\begin{const} Let $R^*$ be an augmented cdga with augmentation ideal $I\subseteq R$. One can then form the two-sided bar construction $B(R):=B(\Q,R,\Q)$. This is a simplicial cochain complex over $\Q$ given in simplicial degree $n$ by the complex
\[ B_n(\Q,R,\Q)=\Q\otimes_\Q R^{\otimes n}\otimes_\Q \Q\iso R^{\otimes n}.\]
One may form an associated bicomplex in the usual way and totalize to produce a single complex. As usual when passing from simplicial objects to complexes, there are quasi-isomorphic reduced and unreduced versions. It will be more convenient to work with the reduced, or normalized, bar construction, which is given by
\[ \overline{B_n}(\Q,R,\Q)=I^{\otimes n}\]
Moreover, there is a graded commutative ``shuffle'' product as well as a coproduct defined on $B(R)$, making it into a commutative differential-graded Hopf algebra. It follows that
\[\chi_R:=\rH^0B(R)\]
is a commutative Hopf algebra.
\end{const}

We have been deliberately vague about some of the details of the above construction, but let us give a more careful treatment of $\chi_R$. First, note that an element of degree $0$ of $\overline{B}(R)$ is an element $r_1\otimes\dots\otimes r_n$ such that $|r_1|+\dots+|r_n|=n$. In particular, if $R$ is connected, this implies that each $r_i$ must have degree $1$. The differential is given by
\[\begin{split}
 D(r_1\otimes\dots\otimes r_n) &=\sum_{i=1}^n (-1)^{r_1+\dots+r_{i-1}}r_1\otimes\dots\otimes d(r_i)\otimes\dots\otimes r_n \\
 & \qquad \qquad +\sum_{j=1}^{n-1} (-1)^{n+j}r_1\otimes\dots\otimes r_jr_{j+1}\otimes\dots\otimes r_n.
 \end{split}\]



Recall that if $R$ is an augmented algebra with augmentation ideal $I\subseteq R$, the set of {\bf indecomposables} in $R$ is $QR:=I/I^2$.

\begin{defn} Let $R_*$ be a cohomologically connected cdga. Sullivan observed that the structure of a coLie algebra on a vector space $V$ is equivalent to the structure of a dga on $\Lambda(V[-1])$. A
 {\bf $1$-minimal model} for $R_*$ is a coLie algebra $\gamma$ together with a map of dga's $\Lambda(\gamma[-1])\to R_*$ inducing an isomorphism on $\rH^1$ and a monomorphism on $\rH^2$. It is known that any cohomologically connected dga admits a unique $1$-minimal model. We say that $R_*$ is a $K(\pi,1)$ if it is quasi-isomorphic to its $1$-minimal model.
\end{defn}

\begin{thm}[\cite{BK}, Theorem~2.30] Let $R$ be a cohomologically connected cdga. Then the Hopf algebra $\chi_R$ is a free commutative algebra. 
Moreover, $\Lambda Q\chi_R$ is a $1$-minimal model for $R$.
\end{thm}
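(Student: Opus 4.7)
The plan is to prove the two assertions in order. First I would show $\chi_R$ is a free graded-commutative algebra by invoking the Leray--Milnor--Moore structure theorem; second, I would construct a comparison map $\Lambda Q\chi_R \to R$ and verify the $1$-minimal model conditions.

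For the first assertion, observe that $B(R)$ is a differential graded Hopf algebra with the shuffle product and deconcatenation coproduct, and that all this structure respects the weight grading inherited from $R$. Passing to $H^0$ with respect to the total bar differential preserves the Hopf structure and leaves a graded-commutative Hopf algebra $\chi_R$ graded by weight. Cohomological connectedness of $R$ forces the weight-zero part of $R$ to reduce to $\Q$, so every weight-zero bar tensor $r_1\otimes\cdots\otimes r_n$ has each $r_i\in\Q$, giving $\chi_R$ in weight zero equal to $\Q$. At this point the classical Leray--Milnor--Moore theorem (any connected graded-commutative Hopf algebra over a field of characteristic zero is free graded-commutative) provides a graded-commutative algebra isomorphism $\chi_R \iso \Lambda Q\chi_R$.

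For the second assertion, equip $Q\chi_R$ with the coLie cobracket $\psi = \Delta - \tau\circ\Delta$ recalled in \S\ref{MxdTtCatSubsec}, so that $\Lambda Q\chi_R$ acquires the Sullivan differential. I would construct the comparison map $\phi\colon \Lambda Q\chi_R \to R$ as follows. Represent each indecomposable $x\in Q\chi_R$ by a cocycle $\xi = \xi_1+\xi_2+\cdots$ in $\overline{B}(R)$, decomposed by bar length. The bar-length-$1$ piece $\xi_1$ lies in $R^1$, and the bar-length-$2$ piece $\xi_2$ corresponds, modulo decomposables, to the cobracket $\psi(x)$. A choice of section of $I\to I/I^2$ compatible with cocycle lifts yields a well-defined linear map $\rho\colon Q\chi_R \to R^1$, extended freely to an algebra map $\phi$. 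Inspecting the equation $D\xi = 0$ in bar length $1$ yields $d\xi_1 = -(\text{product part from } \xi_2)$, which translates to $d\phi(x) = \phi(\psi(x))$; this is precisely the statement that $\phi$ commutes with differentials.

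The final step is verifying that $\phi$ is a $1$-equivalence, and this is where the main obstacle lies. The computation of $H^1(\Lambda Q\chi_R)$ identifies with the elements of $Q\chi_R$ killed by $\psi$, which map via $\rho$ to $\ker(d\colon R^1\to R^2) = H^1(R)$; the bijectivity uses that the only contributions to $H^0\overline{B}(R)$ in bar length $1$ and weight $q$ come from $R^1$ itself, since cohomological connectedness rules out coboundaries arising from bar length $2$ in internal degree $-1$. For injectivity on $H^2$, a similar but more delicate analysis of bar lengths $1$ and $2$ is required, matching the decomposable part of the Sullivan differential with the Massey-type product structure encoded in the bar construction; again, cohomological connectedness of $R$ is exactly what prevents hidden contributions from bar length $\geq 3$.
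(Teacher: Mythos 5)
First, a point of order: the paper does not prove this statement at all --- it is quoted verbatim from \cite{BK} (Theorem~2.30) as background, so there is no in-paper argument to compare against. Judged on its own terms, your proposal follows what is essentially the standard (and, as far as I recall, the Bloch--Kriz) route: the Leray/Milnor--Moore structure theorem for the Hopf algebra $\chi_R$, followed by a comparison map from the Chevalley--Eilenberg/Sullivan algebra $\Lambda(Q\chi_R[-1])$ to $R$ built from bar-length-one components of cocycle representatives. That is the right architecture.

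However, two steps are genuinely incomplete as written. (1) Your linear map $\rho\colon Q\chi_R\to R^1$ is not well defined by ``choosing a section'': changing a representative $\xi$ by a bar coboundary $D\eta$ alters $\xi_1$ by $d\eta_1$ \emph{plus} the multiplication component $\mu(\eta_2)$, and the identity $d\phi(x)=\phi(\psi(x))$ on the nose (not merely modulo decomposables) requires choosing the representatives for the indecomposables \emph{coherently} with the length-two components appearing in $\bar\Delta\xi$. This is exactly why the $1$-minimal model is normally built stage by stage along the lower central series filtration of $Q\chi_R$ rather than by a single choice of section; your sketch needs that inductive scaffolding to be a proof. Relatedly, the claim that ``every weight-zero bar tensor has each $r_i\in\Q$'' is a chain-level statement that does not follow from a \emph{cohomological} connectedness hypothesis; you need the bar-length spectral sequence (whose $E_1$-term is $\overline{B}(\rH^*(R))$) together with the Adams-grading constraint $R^{p,q}=0$ for $p>2q$ to see that $\chi_R$ is connected in the weight grading before Leray's theorem applies. (2) The injectivity of $\rH^2(\Lambda Q\chi_R)\to\rH^2(R)$ is the substantive half of the $1$-minimal-model assertion --- it encodes that $\chi_R$ detects exactly the Massey-product relations among degree-one classes --- and your proposal explicitly defers it (``a similar but more delicate analysis is required''). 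Without that analysis, carried out via the same bar-length filtration in total degree $1$, the second assertion of the theorem is not established.
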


Recall, as stated above, that the statement that $\cA(\Spec{k})$ is cohomologically connected is precisely the Beilinson-Soul\'{e} conjecture (\ref{BSConj}). Bloch-Kriz thus define the category $MTM_{BK}$ of mixed Tate motives to be the category of (graded) comodules over the above (graded) Hopf algebra. Equivalently, mixed Tate motives are representations of the (graded) coLie algebra $Q\chi_\cA$. Note that if $\Q(r)$ denotes the trivial representation concentrated in degree $n$, then one has
\[\Ext_{MTM_{BK}}^i(\Q,\Q(r))\iso \rH^i(Q\chi_\cA)_{\mathrm{degree}\ r}\iso\rH^{i,r}(\Lambda Q\chi_R).\]
The following result is an immediate consequence.

\begin{prop}
If $\cA$ is a $K(\pi,1)$ then one has
\[\Ext_{MTM_{BK}}^i(\Q,\Q(r))\iso \rH^{i,r}(k;\Q).\]
\end{prop}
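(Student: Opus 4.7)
The plan is to chain together the three displayed isomorphisms immediately preceding the statement with the defining property of a $K(\pi,1)$ cdga. By the theorem of Bloch--Kriz cited just above, $\Lambda Q\chi_\cA$ is a $1$-minimal model for $\cA$, and the Ext-identification
\[
\Ext_{MTM_{BK}}^i(\Q,\Q(r))\iso \rH^i(Q\chi_\cA)_{\text{degree }r}\iso\rH^{i,r}(\Lambda Q\chi_\cA)
\]
is already in hand. So the only thing added by the hypothesis is that we can transport $\rH^{i,r}$ from the minimal model $\Lambda Q\chi_\cA$ to $\cA$ itself.

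First I would invoke the hypothesis: by definition, $\cA$ being a $K(\pi,1)$ means that the structure map $\Lambda Q\chi_\cA\to \cA$ from its $1$-minimal model is a quasi-isomorphism of cdga's. Since this map respects the weight (second) grading by construction, it is in fact a bidegreewise quasi-isomorphism, and hence induces an isomorphism
\[
\rH^{i,r}(\Lambda Q\chi_\cA)\xrightarrow{\ \iso\ }\rH^{i,r}(\cA).
\]

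Finally, I would appeal to the definition of rational motivic cohomology given in \S\ref{MotCohomSec} (via the cdga model $\cA$ from Definition~\ref{cAModel}), which tells us that $\rH^{i,r}(\cA)=\rH^{i,r}(\Spec k;\Q)$. Stringing these isomorphisms together yields the claim. There is no genuine obstacle here; this is a formal corollary of the preceding theorem, and the only point worth checking carefully is that all the isomorphisms in sight respect the weight grading $r$ (which they do, because the bar construction, passage to indecomposables, and the $1$-minimal model comparison are all defined internally to the category of bigraded objects).
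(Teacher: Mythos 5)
Your argument is correct and is exactly what the paper has in mind: it states the result as an immediate consequence of the displayed identification $\Ext_{MTM_{BK}}^i(\Q,\Q(r))\iso\rH^{i,r}(\Lambda Q\chi_\cA)$, the theorem that $\Lambda Q\chi_\cA$ is the $1$-minimal model of $\cA$, and the definition of $K(\pi,1)$. Chaining these with the definition of motivic cohomology via $\cA$ is precisely the intended (one-line) proof.
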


One of the key new features of the article \cite{BK} is the description of \'{e}tale as well as Hodge realizations functors, though we do not say more about that here.

\subsubsection{
Review of cofibers}\label{cofibsec}

In section~\ref{KMSect}, we will outline the approach to mixed Tate motives of Kriz and May. This approach begins with the derived category $\sD(\cA)$ of cohomologically bounded below $\cA$-modules. In the next two sections,
we review some relevant homological algebra.

In this section, we will reproduce some of the discussion from section~III.1 of \cite{KM}, including sign conventions. 
$R^*$ will denote a 
commutative dga.

Recall the unit interval $R$-module $I$. It is free of rank one in degree $-1$, with generator denoted $[I]$, and free of rank two in degree $0$, with generators denoted $[0]$ and $[1]$. The differential is determined by $d[I]=[0]-[1]$. The unit interval $I$ is introduced in order to define homotopies: given dg $R$-modules $M$ and $N$ and two maps $f,g:M\to N$, a homotopy from $f$ to $g$ is a map $h:M\otimes_R I\to N$ such that $h(m\otimes[0])=f(m)$ and $h(m\otimes[1])=g(m)$. Note that since we have chosen to write our unit interval on the right of $M$, we have the formula 
\[d(m\otimes[I])=dm\otimes[I]+(-1)^{\deg{m}}m\otimes([0]-[1]).\]
This gives the formula
\[d(h(m\otimes[I]))=h(dm\otimes[I])+(-1)^{\deg{m}}(f(m)-g(m)).\]

We define the {\bf cone} on $M$ to be the pushout

\centerline{\xymatrix{
M\otimes_R R[1] \ar[r] \ar[d] & 0 \ar[d] \\
M\otimes_R I \ar[r] & CM.}}

\noindent The induced differential is $d(m\otimes[I])=dm\otimes[I]+(-1)^{\deg{m}}m\otimes[0]$. Note that there is a canonical map $M\iso M\otimes_R R[0]\into CM$. More generally, given any map $f:M\to N$ of dg $R$-modules, we define the {\bf cofiber} of $f$ by the pushout diagram

\centerline{\xymatrix{
M \ar[r]^f \ar[d] & N \ar[d] \\
CM \ar[r] & C(f).}}

\noindent A typical element of $C(f)$ can be written in the form $n+m\otimes[I]$, and the induced differential is 
\[d(n+m\otimes[I])=dn+(-1)^{\deg{m}}f(m)+dm\otimes[I].\]
It is often useful to write the differential in the form of the matrix
\[\left(\begin{array}{cc} d_M & 0 \\ \overline{f} & d_N\end{array}\right),\]
where $\overline{f}(x)=(-1)^{|x|}f(x)$.
Given the above definitions, it is now straightforward to verify

\begin{lemma}\label{cofiber}Suppose given dg $R$-modules and maps between them as shown in the diagram

\centerline{\xymatrix{
W \ar[r]^\alpha \ar[d]_f \ar@{-->}[dr]^h & Y \ar[d]^g \\
X \ar[r]^\beta & Z,}}

\noindent where $h$ is a homotopy $h:\beta f\simeq g\alpha$. Then the assignment
\[x+w\otimes[I]\mapsto \beta(x)+h(w)+\alpha(w)\otimes[I]\]
defines a map of dg $R$-modules $C(f)\to C(g)$.
\end{lemma}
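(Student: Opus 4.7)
The plan is to verify the two things needed to show $\Phi(x + w \otimes [I]) := \beta(x) + h(w \otimes [I]) + \alpha(w)\otimes[I]$ is a map of dg $R$-modules: first that $\Phi$ is a well-defined, $R$-linear, grading-preserving map of underlying graded $R$-modules, and second that $\Phi$ commutes with the differentials. The universal property used to define $C(f)$ as a pushout will handle well-definedness, and the calculation with differentials is a direct, if sign-sensitive, bookkeeping exercise using the formulas recorded just above the lemma.

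First I would observe that $\beta$, $\alpha$, and $h$ are all maps of (graded) $R$-modules by hypothesis, and that the two summands $\beta(x)$ and $h(w \otimes [I]) + \alpha(w)\otimes [I]$ have matching degrees with $x$ and $w \otimes [I]$ respectively (using $|[I]| = -1$). Assembling these into $\Phi: X \oplus CW \to Z \oplus CY$ and then checking compatibility with the identification along $W \to X$ versus $W \to CW$ shows that $\Phi$ descends to the pushout $C(f)$ and lands in $C(g)$.

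Next I would verify the differential compatibility by direct computation. Using the explicit formula $d(n + m\otimes[I]) = dn + (-1)^{|m|}g(m) + dm\otimes[I]$ in $C(g)$, together with the homotopy identity
\[
d\bigl(h(w\otimes[I])\bigr) = h(dw\otimes [I]) + (-1)^{|w|}\bigl(\beta f(w) - g\alpha(w)\bigr),
\]
one expands
\[
d_{C(g)}\Phi(x + w\otimes[I]) = \beta(dx) + h(dw\otimes[I]) + (-1)^{|w|}\beta f(w) + \alpha(dw)\otimes[I],
\]
where the two $g\alpha(w)$ contributions cancel (one from $d\circ h$, one from the cone differential on $\alpha(w)\otimes[I]$). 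On the other hand, applying $\Phi$ to $d_{C(f)}(x + w\otimes[I]) = dx + (-1)^{|w|}f(w) + dw\otimes[I]$ yields the same expression, using $R$-linearity of $\beta$ and $\alpha$.

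The main obstacle is purely notational: keeping track of the Koszul sign $(-1)^{|w|}$ appearing in three different places (the cone differentials on $C(f)$ and $C(g)$ and the defect of the homotopy $h$) and confirming that the $g\alpha(w)$ terms cancel. Once the signs are laid out, the verification is mechanical and no further structural input is needed.
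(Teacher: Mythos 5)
Your verification is correct, and it is exactly the computation the paper has in mind: the lemma is stated with the remark that it is ``straightforward to verify'' from the sign conventions just recorded, and your expansion of $d_{C(g)}\Phi$ versus $\Phi d_{C(f)}$, with the two $(-1)^{|w|}g\alpha(w)$ terms cancelling, is that verification. No issues.
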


\subsection{The homotopy category of cell modules}

In this section, we review the description of the derived category of a commutative dga as the homotopy category of so-called ``cell modules''.

The notion of a cell module is based on that of a cell complex in topology. Recall that a CW complex is a topological space that is built up inductively by attaching cells of higher and higher dimension. More generally, there is the notion of cell complex in which one drops the condition on the dimension of the cells. In order to mimic this definition in a category of modules over a dga $R^*$, one needs only know what should be the appropriate notions of sphere and disk modules. All modules will be taken to be left $R$-modules.

\begin{defn} Let $R^*$ be a dga. The {\bf $n$-sphere $R$-module} $S_R^n$ is defined to be the free $R$-module with generator $i^n$ in degree $n$. Similarly, the {\bf $n$-disk $R$-module} $D_R^n$ is defined to be the cone $D_R^n=CS_R^{n+1}$ on the $(n+1)$-sphere. 
\end{defn}

With the above notions, we are now ready to describe cell modules:

\begin{defn}\label{CellMod} Let $R^*$ be a dga. A {\bf cell $R$-module} is an $R$-module $M$ of the form $M\iso\bigcup_n M_n$, where $M_0=0$ and $M_n$ is obtained from $M_{n-1}$ by a pushout diagram

\centerline{ \xymatrix {
\bigoplus_i S^{n_i}_R \ar[r] \ar@{^{(}->}[d] & M_{n-1} \ar@{^{(}->}[d] \\
\bigoplus_i D^{n_i-1}_R \ar[r] & M_n.
}}
\end{defn}

Note that if one ignores the differential, a cell $R$-module is a free module. Thus, cell modules are sometimes called ``semi-free'' modules.

The dga of interest for us will be the Bloch cycle complex, or rather the variant $\cA$ (\ref{cAModel}). Recall that this dga has a secondary grading. We will thus need a notion of cell module over dgas with an additional grading.

\begin{defn}An {\bf Adams-grading} on a dga $R^*$ is a second grading such that $R^{p,q}=0$ for $q<0$ or $p>2q$. The differential is of degree $0$ with respect to the Adams grading and the product preserves both gradings. In the case of a commutative dga, the Adams grading will {\em not} contribute a sign to commutativity.
%

An {\bf Adams-graded module} over an Adams-graded dga $R$ is a dg-$R$-module $M$ equipped with a second grading which is compatible with the Adams-grading of $R$. We say $M$ is {\bf cohomologically bounded below} if $\rH^{p,*}(M)=0$ for $p\ll 0$.
\end{defn}

One has as above sphere $S^{p,q}_R$ and disk $D^{p,q}_R$ modules for all $p,q\in\Z$. An Adams-graded cell module is thus built out of Adams-graded cells. If $R$ is Adams-graded, the unit interval $I$ of \S\ref{cofibsec} is defined to have generators in Adams grading $0$.

\begin{defn} Let $R^*$ be an Adams-graded dga. The {\bf derived category} $\sD(R)$ of cohomologically bounded below $R$-modules is obtained from the category of cohomologically bounded below dg-$R$-modules $M$ by formally inverting the quasi-isomorphisms.
\end{defn}

The above definition is only reasonable when $R$ is itself cohomologically bounded below. Our main example will be the cdga $\cA$ (\ref{cAModel}); again, the Beilinson-Soul\'{e} conjecture (\ref{BSConj}) says that this dga is cohomologically connected.

The point of introducing cell modules is that they provide a more convenient description of $\sD(R)$. Let $\Mod_R$ denote the category of (cohomologically bounded below) $R$-modules and $\Cell_R$ denote the full subcategory of (cohomologically bounded below) cell $R$-modules. One has a corresponding full embedding $h\!\Cell_R\into h\!\Mod_R$ of homotopy categories, obtained by passage to homotopy classes of morphisms. Since homotopy equivalences are quasi-isomorphisms, one has by the universal property of localization a functor $h\!\Mod_R\to\sD(R)$.

\begin{thm}\label{CellEqDer} The composite $h\Cell_R\into h\Mod_R\to\sD(R)$ is an equivalence of categories.
\end{thm}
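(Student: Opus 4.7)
The plan is to prove essential surjectivity by constructing cell approximations, and then full faithfulness via a Whitehead-style theorem for cell modules. These are the standard Kriz--May arguments (\cite{KM}, Part III) adapted to the Adams-graded setting.

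First, I would show that every cohomologically bounded below $R$-module $M$ admits a \emph{cell approximation}, i.e.\ a cell module $\Gamma M$ together with a quasi-isomorphism $\Gamma M \to M$. Starting from some $n_0$ with $\rH^{p,*}(M)=0$ for $p<n_0$, one builds $\Gamma M$ as an increasing union $\bigcup_n F_n$ using the pushout construction of Definition~\ref{CellMod}. At each stage, one attaches sphere cells $S^{p,q}_R$ (one for each generator of $\rH^{p,q}(M)$ not yet in the image) to guarantee surjectivity on cohomology up through degree $n$, and then attaches disk cells to kill the kernel introduced in lower degrees and any spurious new cohomology in degree $p<n$. Both the cohomological grading and the Adams grading are preserved throughout, since the sphere and disk modules are indexed by both, and the resulting map $\Gamma M \to M$ is a quasi-isomorphism by construction.

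Second, I would establish the Whitehead theorem: a quasi-isomorphism $f\colon X\to Y$ between cell $R$-modules is a homotopy equivalence. The key input is the lifting property of cells: if $Z$ is a cell module and $p\colon E\to B$ is a surjective quasi-isomorphism, then any map $Z\to B$ lifts up to homotopy to a map $Z\to E$. This is proved by induction over the cell filtration of $Z$, where at the inductive step one uses the acyclicity of the fiber of $p$ to choose a preimage of a bounding cochain for an attaching map. Applying this to (a mapping cylinder replacement of) $f$, whose cofiber is acyclic, produces a homotopy inverse to $f$. One should also check the more elementary statement that a cell module built out of acyclic data is contractible, which follows by induction using Lemma~\ref{cofiber}.

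Together these facts give the equivalence. Essential surjectivity is immediate: each $M\in\sD(R)$ is isomorphic, via $\Gamma M \to M$, to the image of a cell module. For full faithfulness, note that any morphism $M\to N$ in $\sD(R)$ is represented by a roof $M \xleftarrow{\sim} M' \to N$; cell-approximating and using the lifting property with $X=\Gamma M$, we may represent it by an honest $R$-module map $\Gamma M \to \Gamma N$. Two such maps agree in $\sD(R)$ exactly when they are chain-homotopic, again by Whitehead applied to their difference. The main obstacle is entirely bookkeeping rather than conceptual, namely ensuring the inductive cell-attachment argument respects the Adams grading and the cohomological boundedness hypothesis at every stage; this is automatic because the sphere/disk modules $S^{p,q}_R, D^{p,q}_R$ exist for all bidegrees and the attachment only ever increases the cohomological degree.
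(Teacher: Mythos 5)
Your proposal is correct and follows essentially the same route as the paper, which deduces the theorem from exactly the two inputs you identify: the Whitehead theorem (\cite{KM}, Thm.~III.2.3) and cellular approximation (\cite{KM}, Thm.~III.2.6). The only cosmetic difference is that the paper cites Whitehead in the form ``$e_*\colon h\Mod_R(M,N)\to h\Mod_R(M,P)$ is a bijection for $M$ a cell module,'' which yields full faithfulness slightly more directly than your ``quasi-isomorphism between cell modules is a homotopy equivalence'' version, but the two are interchangeable once cell approximation is in hand.
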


The main results needed to prove this are 

\begin{thm}[Whitehead, \cite{KM} Thm.~III.2.3] If $M$ is a cell $R$-module and $e:N\to P$ is a quasi-isomorphism of $R$-modules then $e_*:h\!\Mod_R(M,N)\to h\!\Mod_R(M,P)$ is an isomorphism.
\end{thm}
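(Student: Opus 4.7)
The plan is to establish the stronger statement that for any cell $R$-module $M$, the induced chain map
\[\Hom_R(M, e) \colon \Hom_R(M, N) \longrightarrow \Hom_R(M, P)\]
is itself a quasi-isomorphism of (Adams-)bigraded cochain complexes. Since degree-zero chain-homotopy classes of $R$-module maps are computed as $H^0$ of the internal hom complex, passing to $H^0$ will immediately recover the statement about $e_*$.

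The argument proceeds by induction on the cell filtration $0 = M_0 \subseteq M_1 \subseteq \cdots$ with $M = \bigcup_n M_n$. Two structural observations drive the induction. First, $\Hom_R(S^{p,q}_R, X)$ is, up to an Adams and cohomological regrading, just $X$ itself, so $\Hom_R(S^{p,q}_R, e)$ is a quasi-isomorphism whenever $e$ is. Second, $\Hom_R(D^{p,q}_R, X)$ is acyclic since $D^{p,q}_R = C S^{p+1,q}_R$ is contractible as an $R$-module, so $\Hom_R(D^{p,q}_R, e)$ is trivially a quasi-isomorphism. Applying $\Hom_R(-, X)$ to the pushout diagram attaching the $n$-th layer of cells yields a short exact sequence of cochain complexes relating $\Hom_R(M_n, X)$, $\Hom_R(M_{n-1}, X)$, and hom-complexes out of the sphere and disk summands. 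The inductive hypothesis combined with the two observations and a five-lemma chase on the associated long exact cohomology sequences then shows that $\Hom_R(M_n, e)$ is a quasi-isomorphism.

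The main obstacle is the passage from the finite stages to the colimit $M = \colim_n M_n$: because $\Hom_R(-, X)$ is contravariant, this colimit becomes an inverse limit of hom-complexes, and cohomology does not commute with inverse limits in general. This is handled using that each inclusion $M_{n-1} \hookrightarrow M_n$ is a split monomorphism on the underlying bigraded $R$-modules (the cell attachment supplies a complementary summand of free cells), so the restriction maps $\Hom_R(M_n, X) \twoheadrightarrow \Hom_R(M_{n-1}, X)$ are degreewise surjective. The Milnor short exact sequence
\[0 \longrightarrow {\lim_n}^1 H^{*-1}\Hom_R(M_n, X) \longrightarrow H^*\Hom_R(M, X) \longrightarrow \lim_n H^*\Hom_R(M_n, X) \longrightarrow 0\]
together with Mittag--Leffler vanishing of the $\lim^1$ terms, and naturality in $e$, reduces the limiting step to the already-proved finite-stage assertion. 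Taking $H^0$ then produces the promised bijection $e_* \colon h\Mod_R(M, N) \xrightarrow{\cong} h\Mod_R(M, P)$.
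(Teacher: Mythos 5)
The paper states this result only as a citation of \cite{KM}, Thm.~III.2.3, and gives no proof, so the relevant comparison is with the argument in that source. Your route is correct but genuinely different from it. Kriz and May deduce the Whitehead theorem formally from a Homotopy Extension and Lifting Property (their Thm.~III.2.2): HELP is first verified for the generating inclusions $S^{n+1}_R\into D^n_R$ directly from the definition of quasi-isomorphism and the long exact sequence, then propagated up the cellular filtration and through the colimit; surjectivity and injectivity of $e_*$ then follow by applying HELP to $0\to M$ and to the two ends of the cylinder $M\otimes_R I$. You instead prove the stronger statement that $\Hom_R(M,e)$ is a quasi-isomorphism of hom complexes, by induction over the filtration, using that $\Hom_R(-,X)$ carries the degreewise split short exact sequence $0\to M_{n-1}\to M_n\to M_n/M_{n-1}\to 0$ (with free, spherical cokernel) to a short exact sequence of complexes whose outer terms are controlled by the sphere and disk computations, followed by a Milnor sequence for the inverse limit. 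Both approaches are standard; yours buys the stronger conclusion that homotopy classes of maps of all degrees are preserved, while HELP buys the relative extension statement that Kriz and May reuse for cellular approximation of maps and for the model-categorical bookkeeping.

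One step should be restated. The Milnor sequence is available because the tower of complexes $\Hom_R(M_n,X)$ is degreewise surjective, but its $\lim^1$ term is the derived limit of the tower of cohomology groups $H^{*-1}\Hom_R(M_n,X)$, and there is no reason for that tower to be Mittag--Leffler or for its $\lim^1$ to vanish (nonvanishing is exactly the phenomenon of phantom maps). Fortunately you do not need vanishing: by the finite-stage result, $e$ induces an isomorphism of towers of cohomology groups, hence isomorphisms on both the $\lim$ and the $\lim^1$ terms of the two Milnor sequences, and a five-lemma comparison of the sequences for $N$ and $P$ gives that $H^*\Hom_R(M,N)\to H^*\Hom_R(M,P)$ is an isomorphism. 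Phrased that way, the limiting step is complete.
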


\noindent and

\begin{thm}[Cellular approximation, \cite{KM} Thm.~III.2.6] For any $R$-module $M$, there is a cell $R$-module $N$ and a quasi-isomorphism $e:N\xrightarrow{\sim} M$.
\end{thm}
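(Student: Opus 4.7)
The plan is to construct $N$ as a union $N = \bigcup_n N_n$ of cell submodules, together with compatible maps $e_n : N_n \to M$, so that $\rH^{p,*}(e_n)$ becomes an isomorphism in an ever-increasing range of degrees $p$. This is the direct analogue of classical CW-approximation, adapted to the Adams-graded dg setting. Since $M$ is cohomologically bounded below, fix $p_0$ with $\rH^{p,*}(M) = 0$ for $p < p_0$, and set $N_0 = 0$, $e_0 = 0$.

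For the inductive step, maintain the hypothesis that $\rH^{p,*}(e_{n-1})$ is an isomorphism for $p < p_0 + n - 1$ and a surjection for $p = p_0 + n - 1$. To produce $N_n$ from $N_{n-1}$, perform two kinds of cell attachments in a single pushout. First, for surjectivity in degree $p_0+n$: for each Adams degree $q$ and each element of a chosen basis of $\rH^{p_0+n,q}(M)$, attach a sphere cell $S^{p_0+n,q}_R$ whose generator maps to a chosen cocycle representative in $M$. Second, for injectivity in degree $p_0+n-1$: for each class $[z]$ in the kernel of $\rH^{p_0+n-1,*}(e_{n-1})$, choose a cocycle representative $z \in N_{n-1}$ and a bounding cochain $w \in M$ with $dw = e_{n-1}(z)$, then attach a disk cell $D^{p_0+n-2,*}_R$ via the attaching map sending the boundary sphere generator to $z$, extending $e_{n-1}$ by sending the disk generator to $w$. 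Inspecting the cofiber sequence $N_{n-1} \to N_n \to N_n/N_{n-1}$ (the quotient being a wedge of sphere modules) confirms that $\rH^{p,*}(e_n)$ is an isomorphism for $p < p_0 + n$ and surjective in degree $p_0+n$, advancing the inductive hypothesis.

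Set $N = \bigcup_n N_n$; this is a cell $R$-module by construction, with induced map $e : N \to M$. In each fixed degree $p$, only finitely many stages contribute: once $n > p - p_0 + 1$, no further attachment modifies $\rH^{p,*}$, so $\rH^{p,*}(e) = \rH^{p,*}(e_n)$ is already an isomorphism, whence $e$ is a quasi-isomorphism. The main technical hurdle is less the overall strategy, which is standard, than the bookkeeping with the Adams grading: one must attach cells indexed by Adams degree $q$ ranging freely over an infinite set, and verify that the cumulative construction is genuinely an Adams-graded cell module in the sense of Definition~\ref{CellMod}. The cohomological boundedness below is essential both to start the induction and to ensure that the bounding cochains $w$ needed in the injectivity step always exist.
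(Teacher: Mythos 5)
First, a remark on context: the paper does not prove this statement; it is quoted from \cite{KM} (Theorem~III.2.6), so your argument is being compared with the standard proof there rather than with anything in this text.

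Your overall strategy (build $N=\bigcup_n N_n$ by attaching spheres for surjectivity and disks for injectivity) is the right one, but the degree-by-degree induction has a genuine gap at the step ``inspecting the cofiber sequence \dots confirms that $\rH^{p,*}(e_n)$ is an isomorphism for $p<p_0+n$.'' The quotient $N_n/N_{n-1}$ is indeed a sum of sphere modules, but a sphere module $S^m_R$ is a \emph{free $R$-module} on a degree-$m$ generator, so $\rH^{*}(S^m_R)\cong\rH^{*-m}(R)$ is spread over every degree in which $R$ has cohomology, not concentrated in degree $m$ as for a topological cell. Concretely, when you attach a disk to kill a kernel class $[z]$ in degree $p_0+n-1$, you adjoin a free generator $w'$ in degree $p_0+n-2$ with $dw'=\pm z$; an element such as $rw'+y$ with $r\in R^1$ a cocycle satisfying $rz=\pm dy$ is then a new cocycle of $N_n$ in degree $p_0+n-1$ whose image $\pm rw+e_{n-1}(y)$ in $M$ need not be a coboundary. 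Thus stage $n$ can create new kernel classes in exactly the degree it was supposed to have fixed, and since your procedure never revisits that degree, the colimit map $e$ need not be injective on cohomology. (This is the same phenomenon that makes Massey products/Toda brackets the obstruction to naive cell-by-cell constructions elsewhere in this paper.)

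The repair---and the route taken in \cite{KM}---is to drop the degree filtration: take $N_1$ to be a sum of spheres surjecting onto all of $\rH^{*,*}(M)$ at once, and at each subsequent stage attach disks killing a generating set of the \emph{entire} kernel of $\rH^{*,*}(e_n)$, in all bidegrees simultaneously. Surjectivity persists because $N_n\subseteq N_{n+1}$, and injectivity holds in the colimit because any class in $\ker\rH^{*,*}(e)$ is represented by a cocycle lying in some finite stage $N_n$ (cohomology commutes with this sequential colimit of degreewise split inclusions), hence lies in $\ker\rH^{*,*}(e_n)$ and is killed in $N_{n+1}$. One further small point: the bounding cochains $w$ exist simply because $[z]\in\ker\rH^{*,*}(e_{n-1})$ means $e_{n-1}(z)$ is a coboundary; this has nothing to do with cohomological boundedness below, whose role is rather to keep the construction inside the category of cohomologically bounded below modules.
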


The following results will also be useful.

\begin{prop}[\cite{KM}, Lemma~III.4.1] Let $R$ be a cdga and $M$ be a cell-$R$-module. Then $M\otimes_R-:\Mod_R\to\Mod_R$ preserves quasi-isomorphisms. In particular, if $M$ is a cell $R$-module and $N$ is any $R$-module, we have $M\otimes^{\bL}_R N\iso M\otimes_R N$ in $\sD(R)$.
\end{prop}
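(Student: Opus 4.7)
My plan is to induct on the cell filtration $M = \bigcup_n M_n$, first handling sphere and disk modules, then propagating the property through each cell attachment, and finally passing to the colimit. The key leverage will be the semi-freeness of cell modules: after forgetting the differential, $M_n$ is a free graded $R$-module, so cell attachments yield degreewise split short exact sequences of graded $R$-modules, which remain exact after applying $-\otimes_R N$.

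For the base cases, tensoring with a sphere is just a shift, $S^m_R \otimes_R N \iso N[m]$, which manifestly preserves quasi-isomorphisms. For a disk $D^m_R = CS^{m+1}_R$, the cone construction supplies an $R$-linear contraction $s: D^m_R \to D^m_R$ with $ds + sd = \id$ (explicitly, $s$ sends the top generator back to the interval generator, up to sign); tensoring with $\id_N$ yields a contraction of $D^m_R \otimes_R N$, so every such tensor product is acyclic and any induced map is trivially a quasi-isomorphism. For the inductive step, suppose $M_{n-1}\otimes_R -$ preserves quasi-isomorphisms. The pushout defining $M_n$ exhibits $M_{n-1}\hookrightarrow M_n$ as a degreewise split injection of graded $R$-modules, with quotient $Q_n \iso \bigoplus_i S^{n_i-1}_R$. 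Tensoring with any $N$ preserves this split sequence, giving a short exact sequence of dg $R$-modules
$$0 \to M_{n-1}\otimes_R N \to M_n\otimes_R N \to Q_n\otimes_R N \to 0.$$
Given a quasi-isomorphism $f:N\to N'$, I would apply the five lemma to the induced map of long exact sequences in cohomology; the inductive hypothesis on $M_{n-1}$ together with the sphere case for $Q_n$ then forces $M_n\otimes_R f$ to be a quasi-isomorphism. Passing to $M = \colim_n M_n$ is immediate since cohomology commutes with filtered colimits of complexes.

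The second statement follows formally: by Cellular Approximation, choose a quasi-isomorphism $Q\xrightarrow{\sim} N$ from a cell module $Q$. By definition $M \otimes^{\bL}_R N \simeq M\otimes_R Q$ in $\sD(R)$, and the first part applied to $Q\xrightarrow{\sim} N$ produces a quasi-isomorphism $M\otimes_R Q \to M\otimes_R N$, yielding the desired identification. I expect the main technical hurdle to be purely bookkeeping: verifying carefully that the pushout defining $M_n$ really does descend to a genuine short exact sequence of dg $R$-modules after tensoring with $N$ (and not merely to a cofiber sequence in the derived sense). This reduces to the split-graded-monomorphism property of cell attachments, which in turn follows from the semi-freeness of cell modules.
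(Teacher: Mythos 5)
Your argument is correct and is essentially the standard one: the paper itself gives no proof (it cites \cite{KM}, Lemma~III.4.1), and the proof there proceeds exactly as you do, by induction up the cellular filtration using that spheres are free shifts, disks are contractible, and each cell attachment is a degreewise split extension whose quotient is a sum of spheres, followed by passage to the sequential colimit. The derivation of the second statement from the first is also the intended one.
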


\begin{prop}[\cite{KM}, Proposition~III.4.2]\label{DerivedInvarProp} Let $\varphi:R\to R'$ be a quasi-isomorphism of dga's. Then pullback functor $\varphi^*:\sD(R')\to\sD(R)$ is an equivalence of categories.
\end{prop}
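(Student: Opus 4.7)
The plan is to invoke Theorem~\ref{CellEqDer} to model both $\sD(R)$ and $\sD(R')$ by their homotopy categories of cell modules, and then to exhibit extension of scalars as a derived inverse to $\varphi^*$. Restriction $\varphi^*$ trivially preserves quasi-isomorphisms, so it descends to $\sD(R')\to\sD(R)$ at once. Extension of scalars $\varphi_* := (-)\otimes_R R'$ does not preserve quasi-isomorphisms of arbitrary modules, but it sends $S^{p,q}_R$ to $S^{p,q}_{R'}$ and commutes with the pushouts of Definition~\ref{CellMod}, hence carries cell $R$-modules to cell $R'$-modules. I therefore define $L\varphi_*\colon\sD(R)\to\sD(R')$ by choosing for each $M$ a cell approximation $\widetilde{M}\xrightarrow{\sim} M$ and setting $L\varphi_*(M)=\varphi_*\widetilde{M}$; Whitehead's theorem ensures this is well defined up to canonical isomorphism.

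Next I verify that the unit $\eta_M\colon M\to\varphi^*\varphi_*M = M\otimes_R R'$ is a quasi-isomorphism whenever $M$ is a cell $R$-module. This map is obtained by applying $M\otimes_R(-)$ to the quasi-isomorphism $\varphi\colon R\to R'$ viewed in $\Mod_R$, and the preceding Proposition asserts precisely that $M\otimes_R(-)$ preserves quasi-isomorphisms for cell $M$. This yields $\varphi^*\circ L\varphi_*\simeq\id_{\sD(R)}$ on cell modules, hence on all of $\sD(R)$.

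For the reverse composition, take $N'\in\sD(R')$ and choose a cell approximation $q\colon M\xrightarrow{\sim}\varphi^*N'$ over $R$; let $\tilde q\colon\varphi_*M\to N'$ denote its $R'$-linear adjoint. By naturality of the adjunction, $q$ factors as $M\xrightarrow{\eta_M}\varphi^*\varphi_*M\xrightarrow{\varphi^*\tilde q}\varphi^*N'$. Since both $q$ (by choice) and $\eta_M$ (by the previous step) are quasi-isomorphisms, two-out-of-three forces $\varphi^*\tilde q$, and hence $\tilde q$ itself, to be a quasi-isomorphism. Thus $L\varphi_*\circ\varphi^*\simeq\id_{\sD(R')}$.

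The main subtlety, and the only place the hypothesis that $\varphi$ is a quasi-isomorphism enters in an essential way, is the unit argument: $\varphi_*$ is badly behaved on general modules, and one must pass to cell approximations before extending scalars. Everything else is formal, relying on the semi-freeness of cell modules to control the behavior of the underived tensor product---precisely the content of the preceding Proposition.
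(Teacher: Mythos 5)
Your argument is correct and is essentially the standard proof of this result; the paper itself gives no proof, simply citing \cite{KM}, Proposition~III.4.2, and your plan (derive $(-)\otimes_R R'$ via cell approximation, check the unit is a quasi-isomorphism on cell modules using the preceding Proposition, and deduce the counit case by two-out-of-three through the adjunction) is the same route taken there. No gaps.
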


\subsection{The approach of Kriz-May}\label{KMSect}


We recall from \cite{BBD} the notion of a ``t-structure''.

\begin{defn} Let $\cT$ be a triangulated category. A {\bf t-structure} on $\cT$ is a pair of full subcategories $\cT^{\leq 0}$ and $\cT^{\geq 0}$ such that
\begin{enumerate}
\item $\cT^{\leq 0}\subset \cT^{\leq 1}:=\cT[-1]$ and $\cT^{\geq 1}:=\cT^{\geq 0}[-1]\subset \cT^{\geq 0}$
\item $\Hom(X,Y)=0$ if $X\in\cT^{\leq 0}$ and $Y\in\cT^{\geq 1}$
\item For any $X\in\cT$ there is a distinguished triangle $X^{\leq 0}\to X\to X^{\geq 1}\to X^{\leq 0}[1]$ with $X^{\leq 0}\in\cT^{\leq 0}$ and $X^{\geq 1}\in\cT^{\geq 1}$.
\end{enumerate}
One refers to $\cT^0:=\cT^{\leq 0}\cap\cT^{\geq 0}$ as the {\bf heart} of the t-structure.
\end{defn}

\begin{thm}[\cite{BBD}]\label{HeartThm} The heart of a t-structure is an abelian category.
\end{thm}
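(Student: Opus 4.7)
The plan is to verify the axioms of an abelian category directly. Additivity of $\cT^0$ is essentially immediate: it is a full subcategory of the additive category $\cT$, contains the zero object, and is closed under finite direct sums because both $\cT^{\leq 0}$ and $\cT^{\geq 0}$ are (this can be read off from axiom (2) applied to each summand of a candidate direct sum in $\cT$). What requires real work is the construction of kernels and cokernels and the identification of the canonical map from coimage to image.

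The main tool is a pair of truncation functors $\tau^{\leq n}\colon \cT\to \cT^{\leq n}$ and $\tau^{\geq n}\colon \cT\to \cT^{\geq n}$. The first step is to build these from axiom (3): for $X\in\cT$ choose a distinguished triangle $X^{\leq 0}\to X\to X^{\geq 1}\to X^{\leq 0}[1]$, and use axiom (2) (together with the long exact sequence obtained by applying $\Hom(W,-)$) to show that this decomposition is unique up to unique isomorphism and functorial. The resulting $\tau^{\leq 0}$ is right adjoint to $\cT^{\leq 0}\into\cT$, and $\tau^{\geq 1}$ is left adjoint to $\cT^{\geq 1}\into\cT$. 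A standard consequence (again using axiom (2)) is that these commute up to canonical isomorphism, $\tau^{\leq m}\tau^{\geq n}\cong \tau^{\geq n}\tau^{\leq m}$, so that the cohomology functor $\rH^n:=\tau^{\geq n}\tau^{\leq n}\cong \tau^{\leq n}\tau^{\geq n}\colon\cT\to\cT^0$ is well defined.

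Now let $f\colon X\to Y$ be a morphism in $\cT^0$ and complete to a distinguished triangle $X\xrightarrow{f}Y\to Z\to X[1]$. The key structural claim is that $Z\in\cT^{\leq 0}\cap\cT^{\geq -1}$: both inclusions follow by applying $\Hom(W,-)$ for $W\in\cT^{\leq -2}$, resp.\ $\cT^{\geq 1}$, to the rotated triangles, using the axioms and the vanishing of $\Hom$ guaranteed by axiom (2). Granting this, one defines
\[
\ker f := \rH^{-1}(Z),\qquad \mathrm{coker}\, f := \rH^{0}(Z),
\]
both objects of $\cT^0$, and checks the universal properties by combining the adjunction properties of the truncation functors with the long exact sequence obtained from the triangle.

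The main obstacle, and what occupies the bulk of the argument in \cite{BBD}, is showing that the canonical comparison map $\mathrm{coim}\,f\to \mathrm{im}\,f$ is an isomorphism; equivalently, that every monomorphism is a kernel and every epimorphism is a cokernel. The strategy is to factor $f$ as $X\twoheadrightarrow\mathrm{coim}\,f \to \mathrm{im}\,f\hookrightarrow Y$ and invoke the octahedral axiom twice to produce distinguished triangles whose vertices are identified, via truncation, with the kernel/cokernel of each factor. A careful diagram chase (exploiting both the uniqueness of truncation and the vanishing of $\Hom$ between appropriate pieces) then shows the middle map fits into a triangle whose other two vertices vanish in the heart, hence is an isomorphism. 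This octahedral bookkeeping is the delicate step; everything else is a formal consequence of the t-structure axioms and adjointness.
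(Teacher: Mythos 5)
This theorem is quoted in the paper as background from \cite{BBD}; the paper supplies no proof of its own, so there is nothing internal to compare against. Your outline correctly reproduces the standard argument of \cite{BBD} (truncation functors from axioms (2) and (3), $\ker f=\rH^{-1}(Z)$ and $\mathrm{coker}\,f=\rH^0(Z)$ for the cone $Z$ of $f$, and the octahedral axiom for the coimage--image comparison); the only slip is cosmetic, namely that membership of $Z$ in $\cT^{\leq 0}$ is checked by applying $\Hom(-,W)$ for $W\in\cT^{\geq 1}$ rather than $\Hom(W,-)$.
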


Since the heart $\cT^0$ is an abelian category, one can form its derived category $\sD(\cT^0)$, but this need not agree with $\cT$. In fact, there is no canonical functor in general. Moreover, even in cases where one has a functor $\sD(\cT^0)\to\cT$, one has that the induced map on $\Ext$ groups is an isomorphism in degree $0$ and $1$ but only an injection in degree $2$. This holds since the $\Ext$ groups in $\sD(\cT^0)$ are necessarily generated by $\Ext^1$, whereas this need not be the case in $\cT$.

Kriz and May consider the triangulated category $\sD(A)$ of cohomologically bounded below $A$-modules. They define subcategories $\sD^{\leq 0}\subset \sD(A)$ and $\sD^{\geq 0}\subset \sD(A)$ by
\[\sD^{\leq 0}=\{M\in\sD(A) \mid \rH^n(M\otimes^{\bL}_A \Q)=0, n<0\}\]
and
\[\sD^{\geq 0}=\{M\in\sD(A) \mid \rH^n(M\otimes^{\bL}_A \Q)=0, n>0\}.\]
Let $\cH:=\sD^{\leq 0}\cap\sD^{\geq0}$  and $\cF\cH\subseteq\cH$ be the full subcategory of $M\in\cH$ such that $\rH^0(M\otimes^{\bL}_A \Q)$ is finite dimensional.

\begin{thm}[\cite{KM}, Theorem~IV.1.1] If the Beilinson-Soul\'{e} conjecture holds for $k$, then the above defines a $t$-structure on $\sD(A)$. Moreover, the functor $\omega:\cF\cH\to\Vect_\Q$ defined by $\omega(M)=\rH^0(M\otimes^{\bL}_A\Q)$ is a fiber functor, making $\cF\cH$ into a Tannakian category.
\end{thm}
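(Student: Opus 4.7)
The strategy is to pass to the equivalent subcategory $h\Cell_A\subset\sD(A)$ via Theorem \ref{CellEqDer} and to exploit that for a cell $A$-module $M$ one has $M\otimes^{\bL}_A\Q\iso M\otimes_A\Q$, a complex whose underlying graded $\Q$-vector space is spanned by the cells of $M$. Axiom (1) of a $t$-structure is then immediate from the definitions. The core of the argument is the construction, for each $X\in\sD(A)$, of a quasi-isomorphic \emph{minimal} cell model $\tilde M$ whose cells are in bijection with a homogeneous basis of $\rH^{*,*}(X\otimes^{\bL}_A\Q)$. Here the Beilinson-Soul\'{e} conjecture enters essentially: cohomological connectedness of $A$ allows one to build $\tilde M$ inductively by increasing cell degree, choosing the attaching map of each newly added cell to land in the submodule generated by cells of strictly lower degree (no higher-degree targets are available because $A$ itself has no cohomology below degree $0$). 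Without connectedness this inductive step breaks down.

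Granted such a $\tilde M$, define $\tilde M^{\leq 0}$ to be the sub-$A$-module generated by cells of cohomological degree $\leq 0$; this is closed under the differential by construction, and the quotient $\tilde M^{\geq 1}:=\tilde M/\tilde M^{\leq 0}$ inherits a cell structure with all cells in degrees $\geq 1$. The short exact sequence $0\to\tilde M^{\leq 0}\to\tilde M\to\tilde M^{\geq 1}\to 0$ provides the distinguished triangle required in axiom (3), with the outer terms manifestly in $\sD^{\leq 0}$ and $\sD^{\geq 1}$. For axiom (2), represent $X\in\sD^{\leq 0}$ and $Y\in\sD^{\geq 1}$ by minimal cell models with cells in the respective degree ranges. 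Using Whitehead's theorem to compute $\Hom_{\sD(A)}(X,Y)$ as homotopy classes of chain-level maps, any such map evaluated on a generating cell of $X$ in degree $\leq 0$ lands in the degree-$\leq 0$ part of $Y$; since $Y$ has no cells there and $A$ is cohomologically connected, a degree count forces the map to be null-homotopic.

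For the fiber functor, exactness of $\omega$ on $\cH$ is immediate: for $M\in\cH$, $M\otimes^{\bL}_A\Q$ has cohomology concentrated in degree $0$, so $\omega$ coincides with $-\otimes^{\bL}_A\Q$ there, and a short exact sequence $0\to M'\to M\to M''\to 0$ in $\cH$ yields a distinguished triangle in $\sD(\Q)$ whose long exact sequence collapses by degree to a short exact sequence of $\Q$-vector spaces. Monoidality $\omega(M\otimes N)\iso\omega(M)\otimes_\Q\omega(N)$ follows from the identification $(M\otimes_A N)\otimes^{\bL}_A\Q\iso(M\otimes^{\bL}_A\Q)\otimes_\Q(N\otimes^{\bL}_A\Q)$ applied to cell representatives. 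Faithfulness is a minimal-model argument: if $\omega(M)=0$ for $M\in\cF\cH$, then, since $M$ is cohomologically concentrated in degree $0$, a minimal cell model of $M$ has no cells at all, whence $M\iso 0$ in $\sD(A)$. Rigidity of $\cF\cH$ and $\End(\mathbf{1})=\Q$ are then inherited via $\omega$ from $\Vect_\Q$, making $\cF\cH$ neutral Tannakian.

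The main obstacle is the minimal cell model construction of paragraph one: this is where the Beilinson-Soul\'{e} hypothesis does the real work, and where one must carefully verify both that the inductive choice of attaching maps can always be made and that the resulting object is well-defined up to quasi-isomorphism. Once the minimal model formalism is in place, the $t$-structure axioms and the exactness, monoidality, and faithfulness of $\omega$ are formal consequences of the cell-module calculus reviewed in the previous sections.
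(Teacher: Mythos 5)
The paper does not prove this statement: it is quoted from \cite{KM} (Theorem~IV.1.1), and the surrounding sections only assemble the ingredients (cellular approximation, the Whitehead theorem, minimal modules). Your sketch follows the same route as the actual proof in \cite{KM}, and the core of it is sound: Beilinson--Soul\'{e} gives cohomological connectedness of $A$, hence minimal models exist; for a minimal $M$ the complex $M\otimes_A\Q$ has zero differential, so membership in $\sD^{\leq 0}$ or $\sD^{\geq 0}$ is read off from the degrees of the cells; the sub-module on cells of degree $\leq 0$ is $d$-stable because the augmentation ideal of $A$ sits in positive degrees, which gives axiom (3); and a degree count on minimal representatives kills every chain map from $\sD^{\leq 0}$ to $\sD^{\geq 1}$, giving axiom (2). (Minor imprecision: minimality only forces the differential of a cell to involve cells of degree $\leq$ its own, not strictly lower, but this does not affect the argument.) Exactness, monoidality, and faithfulness of $\omega$ are handled correctly.

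The one step that is wrong as written is the last sentence: rigidity of $\cF\cH$ is not ``inherited via $\omega$ from $\Vect_\Q$'' --- duals cannot be pulled back along a functor. To make $\cF\cH$ Tannakian you must exhibit the dual inside $\cF\cH$ itself, e.g.\ as $\Hom_A(M,A)$ for a finite cell module $M$, and verify that it is again a finite cell module lying in the heart with $\omega(M^\vee)\iso\omega(M)^\vee$; this is a genuine (if routine) piece of the argument in \cite{KM} rather than a formal consequence of having a fiber functor.
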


Again, if we take $h\!\Cell_R$ as a model for the derived category $\sD(R)$, then the ordinary tensor product coincides with the derived tensor product.

\begin{defn}The Kriz-May category of mixed Tate motives $MTM_{KM}$ is the category $\cF\cH$.
\end{defn}

\begin{thm}[\cite{KM}, Theorem~IV.1.10] If the Beilinson-Soul\'{e} conjecture holds for $k$, the Kriz-May category of mixed Tate motives $MTM_{KM}$ is equivalent to the Bloch-Kriz category $MTM_{BK}$ (\S\ref{BlchKrzSec}).
\end{thm}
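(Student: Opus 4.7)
The strategy is Tannakian reconstruction: both $MTM_{KM}$ and $MTM_{BK}$ are neutral Tannakian categories over $\Q$ whose simple objects are the Tate twists $\Q(n)$, and both carry a canonical fiber functor to graded $\Q$-vector spaces given by the associated graded of the weight filtration. Under the Beilinson-Soul\'e assumption, $\cA$ is cohomologically connected, so both the Kriz-May $t$-structure exists and $\chi_\cA = \rH^0(B(\cA))$ is a connected Hopf algebra. My plan is to build a comparison functor $\Phi: MTM_{KM} \to MTM_{BK}$, check it is exact, tensor-compatible, and compatible with the fiber functors, and then deduce the equivalence by comparing the associated Hopf algebras (equivalently, by comparing $\Ext$-algebras and using that both are generated in degree $1$).

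To define $\Phi$, represent each $M \in \cF\cH$ by a cell module and use the normalized two-sided bar resolution $B(\cA, \cA, \Q) \xrightarrow{\sim} \Q$ of $\Q$ as a right $\cA$-module to compute
\[ \omega(M) \;=\; \rH^0(M \otimes^{\bL}_\cA \Q) \;\simeq\; \rH^0(M \otimes_\Q B(\cA)). \]
The coalgebra structure on $B(\cA)$ descends to the Hopf algebra structure on $\chi_\cA$ and endows $\omega(M)$ with a natural $\chi_\cA$-coaction; this defines the comodule $\Phi(M)$. Exactness of $\Phi$ on the heart is essentially the defining property of the $t$-structure (the higher derived tensor products vanish on $\cH$ by definition), and tensor compatibility follows from the shuffle-product/deconcatenation dg Hopf algebra structure on $B(\cA)$.

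The next step is to check that $\Phi$ induces an isomorphism on $\Ext^1(\Q, \Q(n))$. On the Kriz-May side, an extension in the heart corresponds to a class in $\Hom_{\sD(\cA)}(\Q, \Q(n)[1])$, which a small cell-module argument identifies with $\rH^{1,n}(\cA) = \rH^{1,n}(k;\Q)$. On the Bloch-Kriz side, the degree-$n$ part of the indecomposables $Q\chi_\cA$ computes $\Ext^1_{MTM_{BK}}(\Q, \Q(n))$, and by Theorem~2.30 of \cite{BK} it also equals $\rH^{1,n}(\cA)$ via the $1$-minimal model map $\Lambda Q\chi_\cA \to \cA$. Chasing definitions shows the two identifications agree.

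For essential surjectivity, construct a quasi-inverse $\Psi$ as a twisted tensor product. Given a finite-dimensional $\chi_\cA$-comodule $N$, lift the coaction $N \to N \otimes \chi_\cA$ through $B(\cA)$ and compose with the universal twisting cochain $\tau: B(\cA) \to \cA[1]$ to define a twisted differential on the cell $\cA$-module $N \otimes_\Q \cA$, producing $\Psi(N)$. The main obstacle, and the technical heart of the proof, is verifying that $\Psi(N)$ actually lies in the heart $\cH$ (rather than merely in $\sD(\cA)$), that $\Psi$ descends to a well-defined functor independent of the chosen lift of the coaction, and that the unit and counit $\Phi \Psi \simeq \id$ and $\Psi \Phi \simeq \id$ hold; this requires an obstruction-theory argument organized by the cosimplicial filtration on $B(\cA)$, using cohomological connectedness at each stage to extend partial lifts. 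Granting this, the higher-$\Ext$ comparison is then automatic from the Tannakian formalism once the $\Ext^1$ identification is in hand.
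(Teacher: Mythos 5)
This statement is not proved in the paper at all: it is quoted verbatim from Kriz--May (Theorem~IV.1.10 of \cite{KM}), and the only indication the paper gives of the underlying mechanism is the remark at the end of \S\ref{MinModSect} --- given an object of $\cF\cH$ represented by a cell module, replace it by its (unique) minimal model $M$; since the differential of $M$ is decomposable, the indecomposables $M\otimes_\cA\Q$ carry a natural $\chi_\cA$-comodule structure, and this assignment is the equivalence. Your route is genuinely different: you work with the bar resolution and a twisting cochain $B(\cA)\to\cA[1]$, i.e.\ a Koszul-duality comparison between $\cA$-modules and $\chi_\cA$-comodules. That is a legitimate alternative, but note what the minimal-model route buys: the equivalence $h\mathrm{Min}_\cA\simeq\sD(\cA)$ together with the uniqueness of minimal models does, once and for all, exactly the work you defer to an ``obstruction-theory argument organized by the cosimplicial filtration on $B(\cA)$'' (well-definedness of the inverse, membership in the heart, and the unit/counit identifications).

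Beyond being a sketch at its technical heart, your proposal contains one claim that is wrong as stated: that ``the higher-$\Ext$ comparison is then automatic from the Tannakian formalism once the $\Ext^1$ identification is in hand.'' If you genuinely construct the quasi-inverse $\Psi$ and verify $\Phi\Psi\simeq\id$ and $\Psi\Phi\simeq\id$, no $\Ext$ comparison is needed at all; if instead you want to deduce the equivalence from the $\Ext$ comparison, an isomorphism on $\Ext^1(\Q,\Q(n))$ is not sufficient. A tensor functor compatible with the fiber functors corresponds to a map of connected graded Hopf algebras; an isomorphism on $\Ext^1$ is an isomorphism on indecomposables, which gives surjectivity of that map but not injectivity. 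To conclude you would also need injectivity on $\Ext^2(\Q,\Q(n))$ --- precisely the ``isomorphism on $\rH^1$, monomorphism on $\rH^2$'' condition characterizing the $1$-minimal model in \S\ref{BlchKrzSec} --- and the paper itself warns in \S\ref{KMSect} that maps out of derived categories of hearts are in general only injective on $\Ext^2$. So either commit fully to constructing $\Psi$ (the hard part you postponed), or supply the $\Ext^2$ injectivity; as written the argument does not close.
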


In the category $MTM_{KM}$, the role of the objects $\Q(j)$ of \S\ref{MxdTtCatSubsec} is played by the cell modules $A(r)$, where the notation $A(r)$ signifies the free $A$ module on a generator in degree $(0,-r)$.

As we stated above, there is no guarantee that the $\Ext$ groups computed in the derived category of the heart of a $t$-structure on a triangulated category agree with the $\Ext$ groups in the triangulated category. Translated into this context, this means that $\Ext$ groups between mixed Tate motives do not necessarily compute motivic cohomology. The following result gives a sufficient condition.

\begin{thm}[\cite{KM}, Theorem~IV.1.11] If the motivic dga $\cA$ is a $K(\pi,1)$ then $\Ext^i_{MTM}(\Q,\Q(j))\iso\rH^{i,j}(k;\Q)$.
\end{thm}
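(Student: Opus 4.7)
The plan is to reduce the claim to the corresponding statement already established for the Bloch-Kriz category. The preceding theorem gives an equivalence $MTM_{KM}\simeq MTM_{BK}$ that matches the Tate generators $\cA(j)\leftrightarrow\Q(j)$, so it induces
\[\Ext^i_{MTM_{KM}}(\Q,\Q(j)) \iso \Ext^i_{MTM_{BK}}(\Q,\Q(j)),\]
and it suffices to compute the right-hand side.

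Now $MTM_{BK}$ is the category of graded $\chi_\cA$-comodules, and the identification
\[\Ext^i_{MTM_{BK}}(\Q,\Q(j)) \iso \rH^i(Q\chi_\cA)_{\mathrm{degree}\,j} \iso \rH^{i,j}(\Lambda Q\chi_\cA)\]
was already recorded in \S\ref{BlchKrzSec}; here $\Lambda Q\chi_\cA$ is the $1$-minimal model of $\cA$ furnished by Theorem~2.30 of \cite{BK}. The first isomorphism is the Hopf-algebra Ext formula applied to Tate comodules (the cobar complex on $\chi_\cA$ reduces to a complex computing Lie algebra cohomology of the associated pro-nilpotent Lie algebra with trivial coefficients), and the second re-expresses cohomology of the coLie algebra of indecomposables as cohomology of the free cdga on its shift.

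The $K(\pi,1)$ hypothesis now closes the argument: by definition it says exactly that the structural map $\Lambda Q\chi_\cA\to\cA$ is a quasi-isomorphism, so
\[\rH^{i,j}(\Lambda Q\chi_\cA) \iso \rH^{i,j}(\cA) = \rH^{i,j}(k;\Q).\]
Chaining the three isomorphisms yields the result.

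The only real obstacle is bookkeeping, namely keeping track of what has been invoked: the Tannakian equivalence $MTM_{KM}\simeq MTM_{BK}$, the Bloch-Kriz construction of a $1$-minimal model for any cohomologically connected cdga, and the computation of Ext in the Bloch-Kriz heart as cohomology of that model. Given these prior inputs, the theorem collapses to the observation that $K(\pi,1)$ is by definition the hypothesis required to replace $\rH^{*,*}(\Lambda Q\chi_\cA)$ with $\rH^{*,*}(\cA)$. Equivalently, one can view the proof directly inside $\sD(\cA)$: one always has $\Ext^i_{\sD(\cA)}(\cA,\cA(j))=\rH^{i,j}(k;\Q)$, and the $K(\pi,1)$ condition is exactly what guarantees that the natural map from $\Ext^i$ in the heart into $\Ext^i$ in $\sD(\cA)$ is an isomorphism in all degrees rather than merely the injection in degree $2$ supplied by general t-structure theory.
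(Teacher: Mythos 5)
Your proposal is correct and follows the route the paper itself sets up: the statement is quoted from Kriz--May without proof, but it is exactly the chain you describe, namely the equivalence $MTM_{KM}\simeq MTM_{BK}$ of the preceding theorem combined with the proposition in \S\ref{BlchKrzSec} that $\Ext^i_{MTM_{BK}}(\Q,\Q(j))\iso\rH^{i,j}(\Lambda Q\chi_\cA)$, with the $K(\pi,1)$ hypothesis supplying the final quasi-isomorphism $\Lambda Q\chi_\cA\simeq\cA$. The only caveat is your closing phrase that $K(\pi,1)$ is ``exactly what guarantees'' the heart-to-derived-category comparison is an isomorphism: for the proof one only needs sufficiency, which is what you actually use.
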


\subsection{Minimal modules}\label{MinModSect}

Now let $R^*$ be a connected dga, and let as usual $I\subseteq R$ be the augmentation ideal.

\begin{defn} We say a bounded below cell $R$-module is {\bf minimal} if it has decomposable differential, meaning that $d(M)\subset (IR)M$.
\end{defn}

Kriz and May prove the following useful results about minimal modules. 

\begin{prop}[\cite{KM}, Proposition~IV.3.3] A bounded below cell $R$-module is minimal if and only if $d=0$ on $M\otimes_R\Q$. Moreover, a quasi-isomorphism $f:M\to N$ between minimal $R$-modules is necessarily an isomorphism.
\end{prop}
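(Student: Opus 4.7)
The proposition has two separable claims: the characterization of minimality in terms of $M\otimes_R \Q$, and the rigidity statement that quasi-isomorphisms between minimal modules are isomorphisms.

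For the first claim, the plan is to observe that $M\otimes_R \Q = M/(IR)M$, and the differential on the tensor product is just the differential of $M$ reduced modulo $(IR)M$. Thus $d = 0$ on $M\otimes_R \Q$ is literally the statement $d(M) \subseteq (IR)M$, which is the definition of minimality. So this is essentially a tautology once the definitions are unpacked; the only thing to check is that the reduction of $d$ is well-defined, which follows from the fact that $d$ is $R$-linear and hence preserves the submodule $(IR)M$.

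For the rigidity claim, the plan is a Nakayama-style argument. First I would observe that since $M$ and $N$ are cell $R$-modules, the functor $-\otimes_R \Q$ on them computes the derived tensor product (by the proposition of Kriz--May quoted above), so $f\otimes_R \id_\Q : M\otimes_R \Q \to N\otimes_R \Q$ is still a quasi-isomorphism. But by the first part of the proposition, both $M\otimes_R\Q$ and $N\otimes_R\Q$ carry the zero differential, so this quasi-isomorphism is literally an isomorphism of bigraded $\Q$-vector spaces.

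Now, forgetting differentials, $M$ and $N$ are free $R$-modules on chosen cell-generators, say $M \cong \bigoplus_i R\cdot e_i$ and $N\cong \bigoplus_j R\cdot f_j$; the induced map $\overline f$ on $M\otimes_R\Q$ is then the map of free $\Q$-modules with those bases, and it is an isomorphism. The plan is to upgrade this to an isomorphism of underlying graded $R$-modules, hence of dg-$R$-modules (since $f$ is a chain map by hypothesis). One uses the bounded-below hypothesis and the Adams grading: in the lowest nonzero bidegree the statement is simply that $\overline f$ is an iso of $\Q$-vector spaces, and by induction on bidegree (lexicographically, with the bounded-below direction as outer induction), a standard Nakayama argument for graded modules over the connected graded ring $R$ shows that any $R$-linear map of free modules that becomes an isomorphism modulo $IR$ is itself an isomorphism.

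The main obstacle is not conceptual but bookkeeping: one must justify the Nakayama step in the bigraded, bounded-below setting, and verify that the cell-module structure gives free $R$-modules in each bidegree with finitely many cells below any given total degree so that the inductive argument terminates. Given the Adams grading bound $R^{p,q} = 0$ for $q < 0$ or $p > 2q$ and cohomological boundedness below of $M, N$, this finiteness is automatic, so the argument goes through without difficulty.
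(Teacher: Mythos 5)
The paper does not prove this statement---it is quoted verbatim from Kriz--May (\cite{KM}, Proposition~IV.3.3)---so there is no in-paper proof to compare against; your blind argument is correct and is essentially the standard proof from that source. The first claim is definitional once one identifies $M\otimes_R\Q$ with $M/IM$ and notes the induced differential is $d_M$ reduced mod $IM$, and the second follows, as you say, from the fact that $f\otimes_R\Q$ is a quasi-isomorphism of complexes with zero differential (hence an isomorphism) together with the graded Nakayama/triangularity argument, which is legitimate here because $R$ is connected and the cell generators' degrees are bounded below, so the ``lower-triangular'' correction term is locally nilpotent and the map can be inverted degree by degree.
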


\begin{prop}[\cite{KM}, Theorem~IV.3.7] Given any $R$-module $N$, there is a minimal $R$-module $M$ and a quasi-isomorphism $M\xrightarrow{\sim}N$. Moreover, $M$ is unique up to isomorphism.
\end{prop}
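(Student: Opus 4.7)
The proposition asserts both existence of a minimal model $M\xrightarrow{\sim} N$ and uniqueness up to isomorphism. I would prove uniqueness as a quick consequence of Whitehead's theorem combined with the earlier rigidity result (Proposition~IV.3.3), and prove existence by a cellular induction that carefully preserves decomposability of differentials at each stage.

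\textbf{Uniqueness.} Given two quasi-isomorphisms $f\colon M\xrightarrow{\sim} N$ and $f'\colon M'\xrightarrow{\sim} N$ from minimal cell $R$-modules, Whitehead's Theorem~III.2.3 applied to $f$ produces a map $g\colon M'\to M$, unique up to homotopy, with $f\circ g\simeq f'$. Since $f$ and $f'$ are quasi-isomorphisms, so is $g$. Proposition~IV.3.3 then forces $g$ to be an isomorphism, yielding $M\iso M'$.

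\textbf{Existence.} I would construct $M$ as the colimit of a sequence $M^{(0)}\subset M^{(1)}\subset\cdots$ of minimal cell $R$-modules built inductively, together with compatible maps $f^{(n)}\colon M^{(n)}\to N$. Let $p_0=\min\{p:\rH^p(N)\neq 0\}$, finite since $N$ is bounded below; take $M^{(0)}=0$. At the inductive step, suppose $f^{(n)}$ induces an isomorphism on $\rH^p$ for $p<p_0+n$ and a monomorphism on $\rH^{p_0+n}$. Two operations produce $M^{(n+1)}$. First, for each basis element of $\mathrm{coker}(\rH^{p_0+n}(f^{(n)}))$, attach a ``free'' cell $D^{p_0+n}_R$ along the zero map from $S^{p_0+n+1}_R$ and send the new generator to a chosen cycle representative in $N$. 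Second, for each basis element of $\ker(\rH^{p_0+n+1}(f^{(n)}))$, attach a cell $D^{p_0+n}_R$ whose attaching sphere maps to a cycle $z$ representing the class, so that the new generator $e$ satisfies $de=z$; extend the map to $N$ by sending $e$ to any chosen $w\in N$ with $dw=f^{(n)}(z)$. The first operation trivially preserves minimality (new differentials are zero); the second is the delicate one.

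\textbf{Main obstacle.} The crux is arranging that in the second step the cycle $z$ can always be chosen to lie in $IM^{(n)}$, so that the new differential $de=z$ remains decomposable. By minimality of $M^{(n)}$, boundaries already lie in $IM^{(n)}$, so the image of $[z]$ in $M^{(n)}/IM^{(n)}=M^{(n)}\otimes_R\Q$ is a well-defined invariant of the cohomology class. The key inductive invariant I would maintain is that the free generators introduced in the first operation map to linearly independent classes in $\rH^*(N)$; this forces the induced map $\rH^*(M^{(n)})\to M^{(n)}\otimes_R\Q$, restricted to $\ker(\rH^*(f^{(n)}))$, to vanish---precisely the condition for $z$ to admit an $IM^{(n)}$-representative. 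Verifying this invariant through the induction is the main technical step; once established, the colimit $M=\bigcup_n M^{(n)}$ is a minimal cell $R$-module, and $f\colon M\to N$ induces an isomorphism on cohomology in every degree, completing the proof.
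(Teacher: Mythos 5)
The paper offers no proof of this proposition; it is quoted as background from Kriz--May (their Theorem~IV.3.7), so there is no in-paper argument to compare against and I can only judge your sketch on its own terms. Your uniqueness argument is correct and standard: Whitehead's theorem gives $g\colon M'\to M$ with $fg\simeq f'$, homotopic maps agree on cohomology so $g$ is a quasi-isomorphism, and Proposition~IV.3.3 then makes $g$ an isomorphism.

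For existence, the cell-by-cell strategy is workable, but your emphasis is misplaced and the sketch has a genuine gap elsewhere. The step you call the main obstacle is in fact automatic: if generators are attached in order of increasing cohomological degree, then at the moment you kill a class in $\ker\rH^{p+1}(f^{(n)})$ every generator of $M^{(n)}$ sits in degree $\leq p$, and since $R$ is connected ($R^{<0}=0$, $R^0=\Q$) every element of $M^{(n)}$ of degree $p+1$ already lies in $IM^{(n)}$; no choice of representative is needed. Your ``linear independence'' invariant is therefore unnecessary, and as stated it is also not clearly sufficient: a kernel class may be represented by a cycle involving second-operation generators, whose images under $f^{(n)}$ are not even cycles of $N$, so independence of the classes $[f^{(n)}(e_\alpha)]$ for first-operation generators does not control the image of $z$ in $M^{(n)}\otimes_R\Q$. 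The real gap is in the injectivity step: attaching $e$ of degree $p$ with $de=z$ kills $[z]\in\rH^{p+1}(M^{(n)})$ but creates new degree-$(p+1)$ cycles $re+m$ with $r\in R^1$, $dr=0$, $rz=-dm$, whose classes may again lie in the kernel of the map to $\rH^{p+1}(N)$; your induction then advances to degree $p+2$ and never returns, so the colimit map need not be injective on $\rH^{p+1}$. Concretely, for $R=\Q\oplus\Q x$ with $x$ in degree $1$, $dx=0$, $x^2=0$, and $N=\Q$, one pass of your second operation yields $Re_0\oplus Re_1$ with $de_1=xe_0$, which still carries the nonzero kernel class $[xe_1]$ in $\rH^1$. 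The repair is either to iterate the kernel-killing step within each degree and pass to a colimit, verifying that the kernel is eventually exhausted, or to take the other standard route: start from an arbitrary cell approximation $N'\to N$, choose a splitting of $N'\otimes_R\Q$ into cohomology, boundaries, and a complement of the cycles, and lift it (inductively, using connectivity of $R$ and boundedness below) to a decomposition of $N'$ as a minimal module plus a contractible one.
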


This last result has as consequence the following

\begin{cor} Let $\mathrm{Min}_R$ denote the category of minimal $R$-modules. The composite
\[ 
h\mathrm{Min}_R\into h\Cell_R\xrightarrow{\sim} \sD(R)\]
is an equivalence of categories.
\end{cor}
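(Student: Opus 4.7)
The plan is to show this is an equivalence by verifying fully faithfulness and essential surjectivity, each of which should follow almost directly from results already in place.

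For fully faithfulness, first observe that $\mathrm{Min}_R$ is a full subcategory of $\Cell_R$ (minimal modules are a special kind of cell module, and we take all $R$-module maps between them). Passing to homotopy classes preserves this, so $h\mathrm{Min}_R \hookrightarrow h\Cell_R$ is fully faithful by construction. The second functor $h\Cell_R \xrightarrow{\sim} \sD(R)$ is an equivalence of categories by Theorem~\ref{CellEqDer} (it is in particular fully faithful). Since the composite of two fully faithful functors is fully faithful, the composite in question is fully faithful.

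For essential surjectivity, let $X \in \sD(R)$ be any object. Since $h\Cell_R \to \sD(R)$ is essentially surjective (again by Theorem~\ref{CellEqDer}, which itself rests on cellular approximation), there is a cell $R$-module $N$ with $N \iso X$ in $\sD(R)$. Now apply the preceding Proposition (\cite{KM}, Theorem~IV.3.7): there exists a minimal $R$-module $M$ together with a quasi-isomorphism $M \xrightarrow{\sim} N$. In $\sD(R)$ this quasi-isomorphism becomes an isomorphism, so $M \iso N \iso X$, exhibiting $X$ as the image of the minimal object $M$.

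Combining the two points, the composite $h\mathrm{Min}_R \into h\Cell_R \xrightarrow{\sim} \sD(R)$ is an equivalence of categories. There is really no obstacle here: the content is packaged in the two previous propositions, namely the existence of minimal models (giving essential surjectivity) and the rigidity statement that a quasi-isomorphism between minimal modules is an isomorphism (which, together with the cell-module equivalence, is already baked into fully faithfulness via Theorem~\ref{CellEqDer}). The only subtlety worth flagging is that one needs $R$ to be cohomologically connected for the notion of minimal cell module to be well behaved and for Proposition~IV.3.7 to apply, which is the standing hypothesis of this subsection.
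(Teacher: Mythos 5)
Your proof is correct and is exactly the argument the paper intends: the corollary is stated there as an immediate consequence of the existence of minimal models together with the equivalence $h\Cell_R\simeq\sD(R)$, which is precisely how you decompose it into fully faithfulness (full subcategory composed with an equivalence) and essential surjectivity (cellular approximation followed by minimal approximation). Your closing remark correctly flags the standing connectedness hypothesis; the only tiny inaccuracy is that the rigidity statement (a quasi-isomorphism of minimal modules is an isomorphism) is not actually needed for fully faithfulness of the composite, but this does not affect the proof.
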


It is this result which allows Kriz and May to establish a comparison of $MTM_{KM}$ with $MTM_{BK}$. Indeed, given a cell module $N$ in $MTM_{KM}$, one can replace it by a minimal module $M$. The indecomposables $M\otimes_R\Q$ give then the desired object of $MTM_{BK}$.

\section{The motivic fundamental group}\label{CellSec}

We write simply $\cA$ for $\cA(\Spec k)$, where $\cA(X)$ is the functorial motivic dga of Definition~\ref{cAModel}. Our goal is to describe a cellular approximation to the motivic version
\begin{equation}
\cA^*(X^n)\to \cA^*(a\times X^{n-1})\oplus 
\dots\oplus \cA^*(X^{n-1}\times b)\to\dots\to \cA^*(pt)^{\oplus^{n+1}}
\label{theMotivicComplex}\end{equation}
of the complex~(\ref{theComplex}) in the case $X=\bP^1-\{0,1,\infty\}$.


\subsection{The $n=2$ case}\label{n2case}



Let $X=\bP^1-\{0,1,\infty\}$, and let $a\neq b\in X(k)$ be rational points. 
When $n=2$, the complex~(\ref{theMotivicComplex}) becomes
\begin{equation}\begin{split}
\cA(X\times X) & \xrightarrow{\beta} \cA\bigl(\{a\}\times X\bigr)\oplus \cA(\Delta_X)\oplus \cA\bigl(X\times\{b\}\bigr) \\
 & \quad \xrightarrow{\beta'} \cA(\{a,a\})\oplus \cA(\{a,b\})\oplus \cA(\{b,b\}).
\end{split}\notag\end{equation}
Here the first map is given by the restriction along the inclusions, where restriction onto the diagonal is taken with a minus sign. The second map is given by restriction to $a$ minus restriction to $b$. More explicitly, the map $\beta'$ is defined by
\[\beta'(x,y,z)=(x_{\mid a}-y_{\mid a},-x_{\mid b}+z_{\mid a},-y_{\mid b}-z_{\mid b}).\]
The above is a complex of dg-\cA-modules, so that the associated total complex defines a dg-\cA-module. We will describe a cellular approximation to this total complex.



We will write $\cA\{n\}$ for $\cA(n)[n]$, the $\cA$-module obtained by shifting both the Adams and homological gradings by $n$; that is, $\cA\{n\}^{p,q}=\cA^{p+n,q+n}$. Recall that there is a quasi-isomorphism $\cA\oplus \cA\{-1\}\simeq \cA(\G_m)$. If $\pi:\G_m\to\Spec(k)$ is the structure morphism and if $t$ is the standard parameter on $\G_m$, this map is given by
\[(x,y)\mapsto \pi^*(x)+\pi^*(y)\cdot [t].\]
Together with the Mayer-Vietoris sequence associated to the covering 
\[(\A^1-\{0\})\cup(\A^1-\{1\})=\A^1\] 
(note that $(\A^1-\{0\})\cap(\A^1-\{1\})=X$), this gives a quasi-isomorphism 
\[\cA\oplus \cA\{-1\}\oplus \cA\{-1\}\xrightarrow{\sim} \cA(X).\]
Explicitly, this is
\[(x,y,z)\mapsto \pi^*(x)+\pi^*(y)\cdot [t]+\pi^*(z)\cdot [1-t].\]
Similarly, one gets a quasi-isomorphism
\[\cA\oplus \cA\{-1\}^4\oplus \cA\{-2\}^4\xrightarrow{\sim} \cA(X\times X)\]
given explicitly by
\begin{equation}
\begin{split}
(x,y_1,\dots,y_4,z_1,\dots,z_4) &\mapsto \pi^*x+\pi^*y_1\cdot [U]+\pi^*y_2\cdot[1-U] \\
 & \qquad +\pi^*y_3\cdot [V]+\pi^*y_4\cdot[1-V] \\
 & \qquad +\pi^*z_1\cdot [U]\cdot[V]+\pi^* z_2\cdot [1-U]\cdot[V] \\
 & \qquad +\pi^* z_3\cdot [U]\cdot[1-V]+\pi^*z_4\cdot[1-U]\cdot[1-V],
\end{split}\notag\end{equation}
where $U$ is the parameter on the first factor and $V$ is the parameter on the second factor. Keeping the above quasi-isomorphisms in mind, we will label the generators of the various summands by $[U]$, $[1-U]\cdot[V]$, etc.

Using these quasi-isomorphic free $\cA$-modules, we attempt to build a complex
\[\cA\oplus \cA\{-1\}^4\oplus \cA\{-2\}^4\xrightarrow{\alpha}\cA^3\oplus \cA\{-1\}^6\xrightarrow{\alpha'}\cA^3.\]
There is no trouble in defining $\alpha'$; this is defined to be restriction to $a$ minus restriction to $b$ as above. Similarly, there is no trouble defining $\alpha$ on the $\cA$ and $\cA\{-1\}$ summands. The difficulty comes when trying to define $\alpha$ on the $\cA\{-2\}$ summands. Writing $W$ for the parameter on the diagonal $\Delta_X$, one wants to set 
\[\alpha([U]\cdot[1-V])=[a]\cdot [1-V]- [W]\cdot [1-W]+ [U]\cdot [1-b],\]
but of course we can't since $[W]$ is not an element of $(\cA\{-1\})^2(2)=\cA^1(1)$. Instead, we simply define
\[\alpha([U]\cdot[1-V])=[a]\cdot [1-V]- [1-b]\cdot [U]\]
 (we have switched the order of $[U]$ and $[1-b]$ here, introducing a sign, because $[U]$ denotes the generator of the left $\cA$-module $\cA\{-1\}$). We define $\alpha$ similarly on the other $\cA\{-2\}$ summands. 

Unfortunately, the above definition of $\alpha$ does not yield a complex. As expected, we have $\alpha'\circ\alpha=0$ on the $\cA$ and $\cA\{-1\}$ summands. On the $\cA\{-2\}$ summands we have
\[\alpha'\circ\alpha([U]\cdot[V])=([a]^2,0,-[b]^2)\]
and
\[\alpha'\circ\alpha([1-U]\cdot[1-V])=([1-a]^2,0,-[1-b]^2).\]
These both vanish by graded-commutativity since the elements have (homological) grading $1$, and we are working over $\Q$. On the other hand,
\[\alpha'\circ\alpha([1-U]\cdot[V])=([1-a]\cdot[a],0,-[1-b]\cdot[b])\]
and
\[\alpha'\circ\alpha([U]\cdot[1-V])=([a]\cdot[1-a],0,-[b]\cdot[1-b]).\]
These do not vanish, but they are coboundaries since the Steinberg relation holds in motivic cohomology (\ref{SteinbergRltn}).
Choices of bounding cycles allow one to define a contracting homotopy $h:\alpha'\circ\alpha\simeq0$.


Let us suppose for a moment that we have chosen appropriate bounding cycles and defined a contracting homotopy. Using Lemma~\ref{cofiber}, this homotopy allows us to build a cell module out of our homotopy-complex. We must now compare this cell module to the original complex. At this point, we have the (not commutative!) solid arrow diagram

\centerline{\xymatrix @R=35pt {
\cA_{X\times X} \ar[r]^{\beta} & \cA_{a\times X}\oplus \cA_{\Delta_X}\oplus \cA_{X\times b} \ar[r]^(0.55){\beta'} \ar[r] & \cA_{a,a}\oplus \cA_{a,b}\oplus \cA_{b,b} \\
\cA\oplus \cA\{-1\}^4\oplus \cA\{-2\}^4 \ar[r]^{\alpha} \ar[u]_f \ar@{-->}[ur]^H \ar@/_2pc/@{-->}[rr]^h & \cA^3\oplus \cA\{-1\}^6 \ar[r]^(0.55){\alpha'} \ar[u]_{f'} & \cA^3 \ar@{=}[u],
}}

\vspace{3mm}\noindent where, for instance, $\cA_{a,a}$ denotes $\cA(\{a,a\})\iso\cA$. The right-hand square commutes, but the $\cA\{-2\}$ summands cause the left-hand square to only commute up to homotopy. The composites $\beta\circ f$ and $f'\circ \alpha$ differ by the element $[W]\cdot[1-W]$ on the summand $\cA\{-2\}[U]\cdot[1-V]$ and by $[1-W]\cdot[W]$ on the summand $\cA\{-2\}[1-U]\cdot[V]$. Defining a commuting homotopy $H:f'\circ\alpha\simeq\beta\circ f$ thus amounts to choosing a bounding cycle $T_{W,1-W}$ for $[W]\cdot[1-W]$ and for $[1-W]\cdot[W]$.

Suppose that we have chosen such a homotopy $H$. To compare our cell module to the original complex, it suffices to find a homotopy $\Theta:h\simeq \beta'\circ H$. But note that given any choice of $H$, we can simply define $h$ to be $\beta'\circ H$. Thus $\Theta$ is not necessary; the only required data is the homotopy $H$.

Any choice of $H$ now produces  a cell module, together with a quasi-isomorphism to the original complex. Thus any two choices will produce homotopy equivalent cell modules. 

\vspace{3mm}

We have given a construction of the truncated motivic fundamental group of \mbox{$X=\bP^1-\{0,1,\infty\}$} in the case $n=2$, using only the Steinberg relation $[t]\cdot[1-t]=0$ in $\rH^{2,2}(X)$ (this is the universal case). We will generalize this to the $n=3$ case in \S\ref{n3case}, and we give the general result in \S\ref{gensec}.

\subsection{Massey products}\label{Masseysec}

We now briefly discuss Massey products (see, e.g., \cite{M}), as these will be needed below.

Let $R^*$ be a dga. Given a cochain $r\in R^n$, we let $\overline{r}$ denote $(-1)^{n-1}r$. This will be convenient later.

Suppose given $\alpha_1,\alpha_2,\alpha_3\in\rH^*(R)$ such that $\alpha_1\alpha_2=0$ and $\alpha_2\alpha_3=0$. Choosing representing cocycles $a_1,a_2,a_3\in R^*$, so that $[a_i]=\alpha_i$, this means that there are cochains $a_{12},a_{23}\in R^*$ such that 
\[d(a_{12})=a_1a_2 \qquad \text{ and } \qquad d(a_{23})=a_2a_3.\]
Actually, it is more convenient to choose $a_{12}$ and $a_{23}$ so that $d(a_{12})=\overline{a_1}a_2$ and $d(a_{23})=\overline{a_2}a_3$. The cochain
\[\overline{a_{12}}a_3+\overline{a_1}a_{23}\]
is then a cocycle. We denote the cohomology class it represents by $\langle \alpha_1, \alpha_2, \alpha_3\rangle$ and refer to it as the {\bf triple Massey product}.

Unfortunately, the above class is not determined by $\alpha_1$, $\alpha_2$, and $\alpha_3$. The cohomology class is sensitive to the choices of $a_{12}$ and $a_{23}$; adding a cocycle to either of these cochains yields a new cohomology class which has equal right to be called the triple Massey product. Thus the triple Massey product $\langle\alpha_1,\alpha_2,\alpha_3\rangle$ is not an element of $\rH^*(R)$ but rather a coset of $\alpha_1\rH^*(R)+\rH^*(R)\alpha_3$ in $\rH^*(R)$. The subgroup $\alpha_1\rH^*(R)+\rH^*(R)\alpha_3\subseteq\rH^*(R)$ is called the {\bf indeterminacy} of this Massey product.

We will need more generally the notion of an $n$-fold Massey product, which is defined inductively. Suppose given $\alpha_1,\dots,\alpha_n\in \rH^*(R)$ with choices of representing cocycles $a_i\in R^*$, $[a_i]=\alpha_i$. Suppose moreover that we have defined the notion of $3$-fold, $4$-fold, \dots, and $n-1$-fold Massey products. Finally, suppose that all of the Massey products $\langle \alpha_i,\dots,\alpha_j\rangle$ are {\em compatibly} defined and contain zero for $j-i<n-1$. That these Massey products contain zero means that we have bounding cochains $a_{i,\dots,j}$ for $\langle\alpha_i,\dots,\alpha_j\rangle$. The compatibility condition is as follows: a choice of cochain $a_{23}$ appears in both $d(a_{13})$ and $d(a_{24})$; we require both choices to be the same. Then the cochain
\[\sum_{i=1}^{n-1}\overline{a_{1,i}}a_{i+1,n}\]
is a cocyle, and we denote the resulting cohomology class by $\langle \alpha_1,\dots,\alpha_n\rangle$. Again, there is (more complicated) indeterminacy involved. 

The Massey products that will arise below will be iterated Massey products of the elements $[t]$ and $[1-t]$ in 
$\rH^{*,*}(X)$.
In fact, we will see that 
a sufficient condition for building a cellular approximation to the complex~(\ref{theMotivicComplex}) will be that these iterated Massey products are defined and contain zero. As we have already used the notation $T_{t,1-t}$ for a cochain bounding the Massey product $\langle [t],[1-t]\rangle$, we will use the notation $T_{x_1,\dots,x_n}$ for a cochain bounding the Massey product $\langle x_1,\dots,x_n\rangle$, and we will refer to these as generalized Totaro cycles (do not confuse cycle with cocyle here!).

\subsection{The $n=3$ case}\label{n3case}

We now look at the next simplest case, when $n=3$. We are interested in obtaining a cellular approximation to the complex

\begin{equation}\begin{split}\cA(X^3)&\xrightarrow{\beta_1}\cA(\{a\}\times X^2)\oplus\cA(\Delta_X\times X)\oplus\cA(X\times\Delta_X)\oplus\cA(X^2\times\{b\}) \\
&\xrightarrow{\beta_2}\cA(\{a,a\}\times X)\oplus\cA(\{a\}\times \Delta_X)\oplus\cA(\Delta_{X^3}) \\
&\qquad \oplus \cA(\Delta_X\times\{b\})\oplus\cA(X\times\{b,b\})\oplus\cA(\{a\}\times X\times\{b\}) \\
&\xrightarrow{\beta_3}\cA(a,a,a)\oplus\cA(a,a,b)\oplus\cA(a,b,b)\oplus\cA(b,b,b)
\end{split}\notag\end{equation}
A Mayer-Vietoris argument again gives us a cellular approximation
\[\cA\oplus\cA\{-1\}^6\oplus\cA\{-2\}^{12}\oplus\cA\{-3\}^8\xrightarrow{\sim}\cA(X^3).\]

A cellular approximation to the above complex will then consist of maps as in the diagram

\begin{equation}
\xymatrix{
\cA\oplus\cA\{-1\}^6\oplus\cA\{-2\}^{12}\oplus\cA\{-3\}^8 \ar[rr]^(.65){f_3}_(.65){\sim} \ar[d]^{\alpha_3} 
& & \cA(X^3) \ar[d]^{\beta_3} \\
\cA^4\oplus\cA\{-1\}^{16}\oplus\cA\{-2\}^{16} \ar[rr]^(.65){f_2}_(.65){\sim} \ar[d]^{\alpha_2} 
& & \cA(X^2)^4 \ar[d]^{\beta_2} \\
\cA^6 \oplus\cA\{-1\}^{12} \ar[rr]^(.65){f_1}_(.65){\sim} \ar[d]^{\alpha_1} & & \cA(X)^6 \ar[d]^{\beta_1} \\
\cA^4 \ar@{=}[rr] & & \cA^4, 
}\label{n3diag}
\end{equation}

\noindent together with appropriate homotopies. Repeated application of Lemma~\ref{cofiber}, together with the fact that the bottom rectangle strictly commutes, gives that the required homotopies are
\begin{align*}
h_3^2&:0\simeq\alpha_2\alpha_3, \\
h_2^2&:0\simeq\alpha_1\alpha_2, \\
h_3^3&:\alpha_1h_3^2-h_2^2\alpha_3\simeq0, \\
H_3^1&:f_2\alpha_3\simeq\beta_3 f_3, \\
H_2^1&:f_1\alpha_2\simeq\beta_2 f_2, \\
H_3^2&:f_1h_3^2+H_2^1\alpha_3+\beta_2H_3^1\simeq0, \\
H_2^2&:\beta_1 H_2^1+h_2^2\simeq0, 
\end{align*}
and
\[H_3^3:h_3^3+H_2^2\alpha_3-\beta_1H_3^2\simeq0.\]

As we remarked in \S\ref{cofibsec}, it is often convenient to write the differentials on cofibers in matrix form. Here the matrices for the two cofibers would be
\[
C=\left(\begin{array}{cccc} d & 0 & 0 & 0 \\ \overline{\alpha_3} & d & 0 & 0 \\ \overline{h_3^2} & \overline{\alpha_2} & d & 0 \\ \overline{h_3^3} & \overline{h_2^2} & \overline{\alpha_1} & d \end{array}\right) \qquad \text{and} \qquad
B=\left(\begin{array}{cccc} d & 0 & 0 & 0 \\ \overline{\beta_3} & d & 0 & 0 \\ 0 & \overline{\beta_2} & d & 0 \\ 0 & 0 & \overline{\beta_1} & d \end{array}\right)
\]
The desired quasi-isomorphism of cofibers would be a matrix
\[H=\left(\begin{array}{cccc} f_3 & 0 & 0 & 0 \\ H_3^1 & f_2 & 0 & 0 \\ H_3^2 & H_2^1 & f_1 & 0 \\ H_3^3 & H_2^2 & H_1^1 & \mathrm{id}\end{array}\right)\]
such that $HC=BH$ (again, $H_1^1=0$).

Replacing the modules on the left column of (\ref{n3diag}) by $C_3$, \dots, $C_0$ and those on the right column by $B_3$, \dots, $B_0$, the desired homotopies are represented in the diagram

\centerline{\xymatrix @C=70pt @R=35pt {
C_3 \ar[r]^{f_3} \ar[d]_{\alpha_3} \ar@{==>}@/_4ex/[dd]_{h_3^2} \ar@{-->}@/_9ex/[ddd]_{h_3^3} \ar@{==>}[dr]_(.59){H_3^1} \ar@{==>}[ddr]_(.3){H_3^2} \ar@/^14ex/@{-->}[dddr]^(.6){H_3^3} & B_3 \ar[d]^(.45){\beta_3} \\
C_2 \ar[r]_(.4){f_2} \ar[d]_{\alpha_2} \ar@{-->}@/_4ex/[dd]_{h_2^2} \ar@/_1ex/@{==>}[dr]_{H_2^1} \ar@/_3ex/@{-->}[ddr]_(.53){H_2^2} & B_2 \ar[d]^{\beta_2} \\
C_1 \ar[r]_(.53){f_1} \ar[d]_(.4){\alpha_1} & B_1 \ar[d]^(.4){\beta_1} \\
C_0 \ar@{=}[r] & B_0
}.}

As in the $n=2$ case, the fact that the bottom rectangle commutes allows us to simplify the above somewhat. Given a choice of $H_2^1$, one can define $h_2^2:=-\beta_1 H_2^1$ and $H_2^2=0$. Similarly, given a choice of $H_3^2$, one can define $h_3^3:=\beta_1 H_3^2$ and $H_3^3=0$. Thus it suffices to define homotopies
\begin{align*}
h_3^2&:0\simeq\alpha_2\alpha_3, \qquad & \qquad H_3^1&:f_2\alpha_3\simeq\beta_3 f_3, \\
H_2^1&:f_1\alpha_2\simeq\beta_2 f_2,\qquad\text{and} & \qquad H_3^2&: f_1h_3^2+H_2^1\alpha_3+\beta_2 H_3^1\simeq0.
\end{align*}


We now describe explicitly what is needed in order to define these homotopies. Let us begin with $h_3^2$. As in \S\ref{n2case}, the composite $\alpha_2\alpha_3$ vanishes on the $\cA$ and $\cA\{-1\}$ summands, and so a homotopy is required only on the $\cA\{-2\}$ and $\cA\{-3\}$ summands. We will write $U$, $V$, and $W$ for the parameters on the three copies of $X$. As in the $n=2$ case, $\alpha_2\alpha_3$ vanishes on the summands of the form $[U]\cdot[V]$ and $[1-V]\cdot[1-W]$. However, 
\begin{equation*}\begin{split}
\alpha_2\alpha_3([U]\cdot[1-V]) & \\
& \hspace{-4ex}=\alpha_2(([a]\cdot[1-V])_{\mid aX^2}+([U]\cdot[1-V])_{\mid X\Delta_X} \\
& \hspace{4ex}-([U]\cdot[1-V])_{\mid X^2b}) \\
& \hspace{-4ex}= ([a]\cdot[1-a])_{\mid aaX}-([a]\cdot[1-V])_{\mid a\Delta_X}+([a]\cdot[1-V])_{\mid aXb} \\
&\hspace{-4ex}\quad +([a]\cdot[1-V])_{\mid a\Delta_X}-([1-b]\cdot[U])_{\mid Xbb} \\
&\hspace{-4ex}\quad -([a]\cdot[1-V])_{\mid aXb}+([1-b]\cdot[U])_{\mid Xbb} \\
&\hspace{-4ex}= ([a]\cdot[1-a])_{\mid aaX}
\end{split}\end{equation*}
Thus we define 
\[h_3^2([U]\cdot[1-V]):=-(T_{a,1-a})_{\mid aaX}.\]
 Also, we have 
\[h_3^2([1-U]\cdot[V])=-T_{1-a,a}:=T_{a,1-a}\]
 (note that, since $a$ is fixed throughout this discussion, the above choice will not cause compatibility problems), and $h_3^2$ is defined similarly on the other summands.

To see what is needed on the $\cA\{-3\}$ summands, we first note that, for instance
\[\alpha_2\alpha_3([U]\cdot[V]\cdot[1-W])=-[b]\cdot[1-b]\cdot[U].\]
Thus we may define 
\[h_3^2([U]\cdot[V]\cdot[1-W])=-T_{b,1-b}\cdot[U]\]
(note that there is a sign since $[U]\cdot[V]\cdot[1-W]$ is of odd degree). Similarly, one finds that 
\[\alpha_2\alpha_3([U]\cdot[1-V]\cdot[W])=[a]\cdot[1-a]\cdot[W]+[b]\cdot[1-b]\cdot[U].\]
Thus we may define
\[h_3^2([U]\cdot[1-V]\cdot[W])=T_{a,1-a}\cdot[W]+T_{b,1-b}\cdot[U].\]
The nonzero values of $h_3^2$ on the $\cA\{-2\}$ and $\cA\{-3\}$ summands are given as in
 Table~\ref{h32table}.
 
\begin{table}\label{h32table}
\begin{center}
\begin{tabular}{c|c}
\text{generators} & $h_3^2$ \\ \hline
$[U]\cdot[1-V]$ & $-T_{a,1-a}$, \\
$[1-U]\cdot[V]$ & $T_{a,1-a}$, \\
$[V]\cdot[1-W]$ & $T_{b,1-b}$, \\
$[1-V]\cdot[W]$ & $-T_{b,1-b}$, \\ \hline
$[U]\cdot[V][\cdot[1-W]$ & $-T_{b,1-b}\cdot[U]$, \\
$[U]\cdot[1-V]\cdot[W]$ & $T_{a,1-a}\cdot[W]+T_{b,1-b}\cdot[U]$, \\
$[U]\cdot[1-V]\cdot[1-W]$ & $T_{a,1-a}\cdot[1-W]$, \\
$[1-U]\cdot[V]\cdot[W]$ & $-T_{a,1-a}\cdot[W]$, \\
$[1-U]\cdot[V]\cdot[1-W]$ & $-T_{a,1-a}\cdot[1-W]-T_{b,1-b}\cdot[1-U]$, \\
$[1-U]\cdot[1-V]\cdot[W]$ & $T_{b,1-b}\cdot[1-U]$.
\end{tabular}
\end{center}
\caption{Values of $h_3^2$}\end{table}

We do not go into as much detail for $H_3^1$ and $H_2^1$ but merely claim that suitable definitions are
\[\begin{split}
H_3^1([U]\cdot[1-V]\cdot[W])&=-(T_{U,1-U}\cdot[W])_{\mid \Delta_X\times X}+([U]\cdot T_{V,1-V})_{\mid X\times\Delta_X}, \\
H_3^1([U]\cdot[1-V]\cdot[1-W])&=-(T_{U,1-U}\cdot[1-W])_{\mid \Delta_X\times X}, \\
H_3^1([U]\cdot[V]\cdot[1-W])&=-([U]\cdot T_{V,1-V})_{\mid X\times\Delta_X}, 
\qquad\text{and} \\
H_2^1([U]\cdot[1-V])&=T_{U,1-U}.
\end{split}\]
Finally, let us see what is needed to define $H_3^2$. We compute
\begin{equation*}\begin{split}
(f_1h_3^2+H^1_2\alpha_3+\beta_2H_3^1)([U]\cdot[V]\cdot[1-W]) & = \\
& \hspace{-17em} -T_{b,1-b}\cdot[U] +[a]\cdot T_{U,1-U} -[a]\cdot T_{U,1-U}+[U]\cdot T_{U,1-U}-[U]\cdot T_{b,1-b} \\
& = [U]\cdot T_{U,1-U}
\end{split}\end{equation*}
Thus $H_3^2([U]\cdot[V]\cdot[1-W])$ is given by $-T_{U,U,1-U}$, where $d(T_{U,U,1-U})=[U]\cdot T_{U,1-U}$. Note that since $[U]^2=0$, the class $[U]\cdot T_{U,1-U}$ is a representative for the Massey product $\langle[U],[U],[1-U]\rangle$.
Similarly, 
\[(f_1h_3^2+H^1_2\alpha_3-\beta_2H_3^1)([U]\cdot[1-V]\cdot[W]) = T_U\cdot[U]-[U]\cdot T_U=-2\cdot[U] \cdot T_U.\]
The class $H_3^2([U]\cdot[1-V]\cdot[W])$ is then given by a class $2T_{U,U,1-U}$. Note that since $T_{1-U,U}=-T_{U,1-U}$, we have that $2[U]\cdot T_{U,1-U}$ represents the Massey product $-\langle[U],[1-U],[U]\rangle$.  


We have sketched a proof of the following

\begin{prop}In order to build the $n+1$-truncated motivic fundamental group ring of $X=\bP^1-\{0,1,\infty\}$ in the case $n=3$, it suffices that the triple Massey products of $[t]$ and $[1-t]$ in 
\[\rH^{2,3}(X;\Q)\iso\CH^3(X,4)\otimes\Q\]
are defined and contain zero.
\end{prop}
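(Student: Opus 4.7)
The plan is to produce the diagram~(\ref{n3diag}) together with all required homotopies and then appeal to (iterated use of) Lemma~\ref{cofiber} to obtain a quasi-isomorphism between the assembled cell module and the totalization of the motivic cosimplicial complex. First I would use Mayer--Vietoris on each factor to construct the left-hand columns of (\ref{n3diag}), together with the componentwise quasi-isomorphisms $f_3,f_2,f_1$: this is routine, generalizing the explicit formulas already exhibited for $\cA(X)$ and $\cA(X\times X)$. Next I would define the $\alpha_i$ by mimicking \S\ref{n2case}, i.e., on each generator of the form $[U_{i_1}]\cdots[U_{i_k}]$ (with $U_{i_j}\in\{U,1-U,V,1-V,W,1-W\}$) by restriction at $a$ and $b$, omitting any ``diagonal'' contribution that cannot be expressed in the free model. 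The bottom rectangle of (\ref{n3diag}) then commutes strictly because the relevant restrictions land in constants, which permits the simplifications noted after the list of required homotopies: I may take $H_1^1=0$, and determine $h_2^2,h_3^3,H_2^2,H_3^3$ from $H_2^1,H_3^2$ via the relations $h_2^2=-\beta_1 H_2^1$, $h_3^3=\beta_1 H_3^2$, $H_2^2=0$, $H_3^3=0$.

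Thus it suffices to define the four homotopies $h_3^2$, $H_3^1$, $H_2^1$, $H_3^2$. A direct calculation (as carried out on representative summands in the excerpt) shows that the obstructions to $\alpha_2\alpha_3=0$, $f_2\alpha_3-\beta_3 f_3=0$, and $f_1\alpha_2-\beta_2 f_2=0$ are all supported on summands whose defect is, up to sign, a product of the form $[x]\cdot[1-x]$ for $x\in\{a,b,U,V,W\}$; hence the Steinberg relation~(\ref{SteinbergRltn}), in the form of chosen bounding cochains $T_{a,1-a},T_{b,1-b}\in\cA^{2,2}(\Spec k)$ and $T_{U,1-U},T_{V,1-V},T_{W,1-W}$ in the appropriate $\cA^{*,*}(X)$, allows one to define $h_3^2$, $H_3^1$, $H_2^1$ on each generator. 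Explicit formulas are given in Table~\ref{h32table} for $h_3^2$, with the analogous formulas on the remaining summands.

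The key new step, and the only place where higher data is needed, is the construction of $H_3^2$ bounding $f_1h_3^2+H_2^1\alpha_3+\beta_2 H_3^1$. Evaluating this expression on a generator such as $[U]\cdot[V]\cdot[1-W]$ or $[U]\cdot[1-V]\cdot[W]$, the already-chosen data combine (as the sample calculation in the excerpt illustrates) to produce cocycles of the shape $[U]\cdot T_{U,1-U}$ and $2\,[U]\cdot T_{U,1-U}$ in $\cA^{*,*}(X)$. Because $[U]\cdot[U]=0$ and $[U]\cdot[1-U]=0$, these are precisely representatives for the triple Massey products $\langle [U],[U],[1-U]\rangle$ and $-\langle[U],[1-U],[U]\rangle$ in $\rH^{2,3}(X;\Q)\cong\CH^3(X,4)\otimes\Q$. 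Thus a generalized Totaro cycle $T_{U,U,1-U}$ can be chosen exactly when these Massey products are defined and contain zero, which is the hypothesis. One has to check that the same analysis covers the remaining generators; here the only new triple Massey products to appear are cyclic permutations and sign variants of $\langle[U],[U],[1-U]\rangle$, all of which lie in the same cohomology group.

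Once $H_3^2$ is defined, repeated application of Lemma~\ref{cofiber} assembles $(\alpha_\bullet,h_\bullet^\bullet,f_\bullet,H_\bullet^\bullet)$ into a map $\Phi$ of cofibers whose source is a cell $\cA$-module and whose target is the totalization of~(\ref{theMotivicComplex}) for $n=3$. The componentwise quasi-isomorphisms $f_3,f_2,f_1,\id$ imply, via the five-lemma applied to the filtration by columns, that $\Phi$ is itself a quasi-isomorphism, yielding the cellular approximation. The main obstacle is ensuring the Massey product representatives really arise in the above computation rather than arbitrary cocycles; this is handled by choosing the lower-order bounding cochains (such as $T_{U,1-U}$) consistently across all places where they appear, so that the compatibility condition in the definition of an iterated Massey product (see \S\ref{Masseysec}) is satisfied automatically.
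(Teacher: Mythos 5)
Your proposal is correct and follows essentially the same route as the paper: Mayer--Vietoris to build the free models and the maps $f_i$, reduction (via strict commutativity of the bottom rectangle) to the four homotopies $h_3^2$, $H_3^1$, $H_2^1$, $H_3^2$, the first three supplied by the Steinberg relation, and the last by bounding cochains for the triple Massey products $\langle[U],[U],[1-U]\rangle$ and $-\langle[U],[1-U],[U]\rangle$ arising from the computation of $f_1h_3^2+H_2^1\alpha_3+\beta_2H_3^1$. Your closing remark about choosing the lower-order cochains $T_{U,1-U}$ consistently so that the Massey products are compatibly defined is exactly the compatibility condition the paper relies on in \S\ref{Masseysec}.
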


\subsection{The general case}\label{gensec}

From the discussion in the previous sections, it is not difficult to guess at the picture for general $n$. One starts with the diagram

\centerline{\xymatrix {
B_n \ar[r]^{\beta_n} & B_{n-1} \ar[r]^{\beta_{n-1}} & \dots \ar[r] & B_1 \ar[r]^{\beta_1} & B_0 \\
C_n \ar[u]^{f_n} \ar[r]^{\alpha_n} & C_{n-1} \ar[u]^{f_{n-1}} \ar[r]^{\alpha_{n-1}} & \dots \ar[r] & C_1 \ar[u]^{f_1} \ar[r]^{\alpha_1} & C_0 \ar@{=}[u],
}}

\vspace{1mm}
\noindent where the top row is the complex~(\ref{theMotivicComplex}), each $C_i$ is a free $\cA$-module, and each $f_i$ is a quasi-isomorphism. 
In addition, one needs homotopies 
\[h_i^j:C_i\to C_{i-j}[1-j] \qquad \text{ for each } 3\leq i\leq n \quad \text{ and } 2\leq j\leq n-1\]
and 
\[H_k^l:C_k\to B_{k-l}[-l] \qquad \text{for each } 2\leq k\leq n \quad \text{ and }1\leq l\leq n-1.\]
As in the previous cases, the right-hand square commutes on the nose, and no homotopies are required in order to deal with it. 

In order to build the $h^2_i$'s and the $H^1_j$'s, one only needs the (ordinary) Totaro cycles. In order to build the $h_i^3$'s and the $H_j^2$'s, one needs in addition certain triple Massey products to be defined and to contain zero. In order to define the $h_i^4$'s and the $H_j^3$'s, one needs certain $4$-fold Massey products to be defined and to contain zero. This pattern continues, so that in order to build the $h_1^{n-2}$ and the $H_j^{n-3}$'s, one needs certain $(n-2)$-fold Massey products to be defined and to contain zero, and in order to define $H_1^{n-2}$ one needs certain $(n-1)$-fold Massey products to be defined and to contain zero. Precise statements are given below.

In order to give precise statements, it will be convenient to introduce some notation. Given a function $f$ on $\G_m-\{1\}$, we will write $\Phi_f$ to mean either $[f]$ or $[1-f]$. Thus the product $\Phi^1_U\Phi^2_V$ can mean any of the following four cohomology classes:$[U][V]$, $[1-U][V]$, $[U][1-V]$, or $[1-U][1-V]$. Note that a set of generators of $C_n$ is given by the elements of the form $\Phi^1_{U_1}\cdots\Phi^n_{U_n}$.

\begin{prop}\label{CompMasseyProp}For $2\leq k\leq n$, the homotopy $h_n^k:C_n\to C_{n-k}[1-k]$ satisfies
\[\begin{split}
d_{n-k}(h_n^k\Phi^1_{U_1}\cdots\Phi^n_{U_n}) &= \\
 &\hspace{-15ex}(-1)^\lambda\bigl(\langle\Phi^1_a,\dots,\Phi^k_a\rangle \cdot \Phi^{k+1}_{U_{k+1}}\cdots\Phi^n_{U_n}-\langle \Phi^{n-k+1}_b\dots\Phi^n_b\rangle \cdot \Phi^1_{U_1}\cdots\Phi^{n-k}_{U_{n-k}}\bigr),
\end{split}\]
where $\lambda=n(k-1)+\frac{k(k-1)}2$. Thus $h_n^k$ can be defined by
\[\begin{split}
h_n^k(\Phi^1_{U_1}\cdots\Phi^n_{U_n}) &= \\
&\hspace{-10ex}(-1)^\lambda\bigl( T_{\Phi^1_a,\dots,\Phi^k_a}\cdot\Phi^{k+1}_{U_k}\cdots\Phi^n_{U_n}-T_{\Phi^{n-k+1}_b,\dots,\Phi^n_b}\cdot\Phi^1_{U_1}\cdots\Phi^{n-k}_{U_{n-k}}\bigr).
\end{split}\]
\end{prop}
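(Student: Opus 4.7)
The plan is to prove the proposition by induction on $k \geq 2$, with the base case being essentially the computation already made for $h_3^2$ in \S\ref{n3case} (suitably generalized to arbitrary $n$). For $k=2$, a direct evaluation of $\alpha_{n-1}\alpha_n$ on a generator $\Phi^1_{U_1}\cdots\Phi^n_{U_n}$ shows that the only surviving terms are the two "extremal" ones: the contribution where both of the first two factors specialize at $a$ (giving $\Phi^1_a\Phi^2_a\cdot\Phi^3_{U_3}\cdots\Phi^n_{U_n}$), and the corresponding one at $b$ in the last two slots, with opposite sign. By the Steinberg relation \Ref{SteinbergRltn}, each of these products is a coboundary of a Totaro cycle $T$, so one can define $h_n^2$ by the displayed formula with $\lambda=n+1$, and a short sign check confirms the formula for $d(h_n^2\Phi)$.

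For the inductive step, the governing identity comes from requiring $C^2=0$ for the matrix differential of \S\ref{cofibsec}; writing $h_i^1:=\alpha_i$ for convenience, the entry at the position mapping $C_n$ to $C_{n-k}$ forces
\[
d\circ h_n^k \;-\; (-1)^{k-1}h_n^k\circ d \;=\; -\sum_{j=1}^{k-1}(-1)^{\epsilon_j}\,h_{n-j}^{k-j}\circ h_n^j,
\]
with signs $\epsilon_j$ determined by the bar convention $\bar r=(-1)^{|r|}r$. On a generator, the second term on the left vanishes since each $\Phi^i_{U_i}$ is a cocycle in $\cA(X)$. Expanding each $h_n^j\Phi$ by the inductive formula splits it into a "left-end" summand with factor $T_{\Phi^1_a,\ldots,\Phi^j_a}$ and a "right-end" summand with factor $T_{\Phi^{n-j+1}_b,\ldots,\Phi^n_b}$; applying $h_{n-j}^{k-j}$ (again by induction) produces four kinds of terms, according to whether each $T$-factor is $a$-flavored or $b$-flavored.

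The heart of the proof is a telescoping cancellation: the two "mixed" families of terms (one $a$-factor and one $b$-factor, with some $\Phi_{U_i}$'s sandwiched between them) cancel pairwise between consecutive values of $j$, because the $b$-factor of one term from index $j$ matches the $b$-factor of the partner term from index $j+1$ up to a sign coming from $(-1)^{\epsilon_j}$ versus $(-1)^{\epsilon_{j+1}}$. What remains are the "pure $a$" and "pure $b$" families, which assemble into
\[
\sum_{i=1}^{k-1}\overline{T_{\Phi^1_a,\ldots,\Phi^i_a}}\cdot T_{\Phi^{i+1}_a,\ldots,\Phi^k_a}\cdot\Phi^{k+1}_{U_{k+1}}\cdots\Phi^n_{U_n}
\]
and the analogous sum at $b$. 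By definition (\S\ref{Masseysec}), these are precisely representatives of the $k$-fold Massey products $\langle\Phi^1_a,\ldots,\Phi^k_a\rangle$ and $\langle\Phi^{n-k+1}_b,\ldots,\Phi^n_b\rangle$, multiplied by the appropriate external $\Phi_{U_i}$'s. This gives the coboundary formula; since each Massey product is assumed to contain zero, one can choose bounding cochains to define $h_n^k$ by the stated formula.

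The main obstacle is bookkeeping of signs, both the sign $\lambda=n(k-1)+k(k-1)/2$ in the final formula and the signs $\epsilon_j$ from the bar convention. The exponent $n(k-1)$ tracks the Koszul sign of moving a degree-$(k-1)$ Totaro cycle past the $\Phi_{U_i}$'s (each of homological degree $1$), while the triangular $k(k-1)/2$ accumulates from iterating the bar convention across a length-$k$ Massey product. The verification that these match the $\epsilon_j$'s inherited from the cofiber matrix is routine but tedious; the telescoping described above is what makes the whole expression collapse to the announced form.
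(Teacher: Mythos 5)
Your proposal follows essentially the same route as the paper: induction on $k$ (the paper phrases it as a double induction on $k$ and $n$), the recursion coming from the matrix differential of the iterated cofiber, expansion of each $h_{n-j}^{k-j}h_n^j$ by the inductive formula, cancellation of the mixed $a$/$b$ terms, and assembly of the pure terms into the standard representative $\sum_i \overline{T_{\Phi^1_a,\dots,\Phi^i_a}}\,T_{\Phi^{i+1}_a,\dots,\Phi^k_a}$ of the $k$-fold Massey product. One detail is off: the mixed terms do not cancel between \emph{consecutive} values of $j$. The term $T_{\Phi^1_a,\dots,\Phi^j_a}\cdot T_{\Phi^{n-k+j+1}_b,\dots,\Phi^n_b}(\cdots)$ arising from $h^{k-j}_{n-j}h^j_n$ (an $a$-cycle of length $j$ against a $b$-cycle of length $k-j$) is cancelled by the corresponding mixed term of $h^{j}_{n-k+j}h^{k-j}_n$, i.e., the pairing is between the complementary indices $j$ and $k-j$, as the paper states; matching lengths forces this, since the $b$-factors at index $j+1$ have lengths $k-j-1$ and $j+1$, neither of which equals $k-j$ in general. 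With that correction the argument goes through as you describe.
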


\begin{proof}The proof is by double induction on $k$ and $n$. The base case is $k=n=2$, and we have
\[d_0h_2^2(\Phi^1_{U_1}\Phi^2_{U_2})=-\alpha_1\alpha_2(\Phi^1_{U_1}\Phi^2_{U_2})=-(\Phi^1_a\Phi^2_a-\Phi^1_b\Phi^2_b)\]
as desired. In fact, we similarly find that
\[d_{n-2}h_n^2(\Phi^1_{U_1}\dots\Phi^n_{U_n})=(-1)^{n-1}\bigl(\Phi^1_a\Phi^2_a\cdot\Phi^3_{U_3}\cdots\Phi^n_{U_n}-\Phi^{n-1}_b\Phi^n_b\cdot\Phi^1_{U_1}\cdots\Phi^{n-2}_{U_{n-2}}\bigr)\]
for any $n$.

For the inductive step, assume the formula holds for all $h^i_j$ with $j<n$ or $i<k$. Recall the matrices $C$, $B$, and $H$ from \S\ref{n3case}. The matrix equation $HC=BH$ gives the equation
\[\begin{split}
d_{n-k}h^k_n\Phi^1_{U_1}\cdots\Phi^n_{U_n} & =-(-1)^n\bigl( h_{n-1}^{k-1}\alpha_n\Phi^1_{U_1}\cdots\Phi^n_{U_n} - h_{n-2}^{k-2}h_n^2\Phi^1_{U_1}\cdots\Phi^n_{U_n}+ \\
&\qquad\qquad\qquad \cdots+(-1)^k\alpha_{n-k+1}h^{k-1}_n\Phi^1_{U_1}\cdots\Phi^n_{U_n}\bigr).
\end{split}\]
By induction, we know all of the terms in this sum. The general term is given by
\[\begin{split}
h^{k-j}_{n-j}h^j_n\bigl(\Phi^1_{U_1}\cdots\Phi^n_{U_n}\bigr)=&(-1)^{nk+j+\frac{k(k-1)}2} \cdot \\
& \hspace{-24ex}\biggl( T_{\Phi^1_a,\dots,\Phi^j_a}\bigl( T_{\Phi^{j+1}_a,\dots,\Phi^k_a}\Phi^{k+1}_{U_{k+1}}\cdots\Phi^n_{U_n}-T_{\Phi^{n-k+j+1}_b,\dots,\Phi^n_b}\Phi^{j+1}_{U_{j+1}}\cdots\Phi^{n-k+j}_{U_{n-k+j}}\bigr) \\
& \hspace{-24ex}-T_{\Phi^{n-j+1}_b,\dots,\Phi^n_b}\bigl( T_{\Phi^1_a,\dots,\Phi^{k-j}_a}\Phi^{k-j+1}_{U_{k-j+1}}\cdots\Phi^{k-j}_{U_{k-j}}-T_{\Phi^{n-k+1}_b,\dots,\Phi^{n-j}_b}\Phi^1_{U_1}\cdots\Phi^{n-k}_{U_{n-k}}\bigr)\biggr)
\end{split}\]
One checks that the terms involving a product of a Totaro cycle at $a$ with a Totaro cycle at $b$ cancel the similar mixed terms appearing in the expression for 
$h^j_{n-k+j}h^{k-j}_n(\Phi^1_{U_1}\Phi^n_{U_n})$, using that $h^{k-j}_{n-j}h^j_n$ appears with sign $(-1)^{n+j}$. Summing the terms gives the result.
\end{proof}

\begin{eg}\label{EgCompMasseyProp} We give some examples to illustrate the result. Proposition~\ref{CompMasseyProp} gives, for instance, the following equations
\[\begin{split}
h_4^2\bigl([U][1-V][W][1-X]\bigr) &=-\left(T_{a,1-a}[W][1-X]-T_{b,1-b}[U][1-V]\right), \\
h_4^3\bigl([U][1-V][W][1-X]\bigr) &= -\left(T_{a,1-a,a}[1-X]-T_{1-b,b,1-b}[U]\right), \\
h_5^2\bigl([U][1-V][W][1-X][1-Y]\bigr)&= \\
 &\hspace{-8em}\left(T_{a,1-a}[W][1-X][1-Y]-T_{1-b,1-b}[U][1-V][W] \right) \\
 &= T_{a,1-a}[W][1-X][1-Y], \\
h_5^3\bigl([U][1-V][W][1-X][1-Y]\bigr)&= \\
 &\hspace{-8em}-\left(T_{a,1-a,a}[1-X][1-Y]-T_{b,1-b,1-b}[U][1-V] \right), \quad\text{and}\\
h_5^4\bigl([U][1-V][W][1-X][1-Y]\bigr)&= \\
 &\hspace{-8em}-\left(T_{a,1-a,a,1-a}[1-Y]-T_{1-b,b,1-b,1-b}[U] \right).
\end{split}\]
We will use these formulae in \S\ref{TrimFatSect} to describe the associated minimal modules.
\end{eg}

One can similarly obtain inductively a formula for the $H_n^k$'s. In order to make a precise statement, it is convenient to introduce the following definition.

\begin{defn}We will say that the {\bf length} of an $n$-fold Massey product is $n$. Similarly, if $T_{x_1,\dots,x_n}$ is a Totaro cycle bounding an $n$-fold Massey product, we will say that it is a Totaro cycle of {\bf length} $n$. The {\bf reduced length} of a Massey product or Totaro cycle will be defined to be the length minus $1$.
\end{defn}

\begin{prop}For $1\leq k\leq n-1$, the homotopy $H_n^k:C_n\to B_{n-k}[-k]$ can be defined by
\[
H_n^k(\Phi^1_{U_1}\cdots\Phi^n_{U_n}) = 
(-1)^\mu \biggl( \sum \Phi^1_{U_1}\cdots T_{\Phi^i_{U_i},\dots,\Phi^j_{U_i}} \Phi^{j+1}_{U_{j+1}}\cdots T_{\Phi^k_{U_k}\cdots\Phi^l_{U_k}}\cdots\Phi^n_{U_n}\biggr)
\]
where $\mu=nk+\frac{k(k-1)}2$ and where the sum is over all such products such that the sum of the reduced lengths of the Totaro cycles is $k$. 
\end{prop}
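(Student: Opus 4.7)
The plan is to prove the formula by a double induction, the inner induction on $k$ for each fixed $n$, using the matrix equation $HC=BH$ introduced in \S\ref{n3case} to determine the coboundary of $H_n^k$. Comparing the entries of $HC$ and $BH$ at the appropriate position yields an equation of the form
\[ d(H_n^k) = \pm\bigl(f_{n-k}\, h_n^k + \beta_{n-k+1}\, H_n^{k-1} + H_{n-1}^{k-1}\,\alpha_n + \sum_{j=2}^{k-1} H_{n-j}^{k-j}\, h_n^j\bigr), \]
so the task reduces to showing that the proposed cochain has precisely this coboundary (that the right-hand side is exact is automatic from $HC=BH$ once it is verified term by term).

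For the base case $k=1$, the equation is $d(H_n^1)=\pm(\beta_n f_n - f_{n-1}\alpha_n)$. Unwinding the definition of $\alpha_n$, one finds that on each generator $\Phi^1_{U_1}\cdots\Phi^n_{U_n}$ the right-hand side is a sum of products in which exactly one pair of consecutive positions has been collapsed on a diagonal to produce a Steinberg pair $\Phi^i_{U_i}\Phi^{i+1}_{U_i}$; this is bounded by replacing that pair with a reduced-length-$1$ Totaro cycle $T_{\Phi^i_{U_i},\Phi^{i+1}_{U_i}}$ in the corresponding slot, matching the proposed formula at $k=1$ and recovering the formulae for $H_3^1$ and $H_2^1$ found in \S\ref{n3case}.

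For the inductive step, I would apply the Leibniz rule to the proposed expression for $H_n^k$. The factors $\Phi^i_{U_i}$ are cocycles, so the only contributions come from differentiating one of the Totaro factors; by the defining equation of a Massey product,
\[ d(T_{x_1,\ldots,x_m}) = \sum_{i=1}^{m-1} \overline{T_{x_1,\ldots,x_i}}\, T_{x_{i+1},\ldots,x_m}, \]
so each splitting of one Totaro factor into two produces a new term of exactly the same shape whose total reduced length is still $k$. The resulting sum must be matched with the right-hand side of the matrix equation: splittings of an interior Totaro factor match contributions from $\beta_{n-k+1}H_n^{k-1}$ (via restriction to the diagonal), while splittings at an extremal position whose fragment specializes to $\Phi^{\bullet}_a$ or $\Phi^{\bullet}_b$ match the terms from $H_{n-j}^{k-j}\, h_n^j$, using Proposition~\ref{CompMasseyProp} together with the inductive hypothesis on the $H$'s with smaller reduced length; the unmatched extremal fragments assemble into $f_{n-k}\, h_n^k$ and $H_{n-1}^{k-1}\,\alpha_n$. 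The hypothesis that the relevant $k$-fold Massey products of $[t]$ and $[1-t]$ are defined and contain zero is precisely what provides the Totaro cycles used above.

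The main obstacle will be the sign and combinatorial bookkeeping. The overall sign $(-1)^\mu = (-1)^{nk+k(k-1)/2}$ must be reconciled with the $(-1)^\lambda$ signs from the $h_n^j$'s, the permutation signs introduced by moving odd-degree Totaro factors past one another, the bars $\overline{(-)}$ appearing in the matrix form of the cofiber differential, and the alternating $(-1)^{n+j}$ weighting of the entries of $HC$. This verification is an elaboration of the sign check that closes the proof of Proposition~\ref{CompMasseyProp}, but is considerably more intricate because multiple Totaro factors occur simultaneously in $H_n^k$ and each Leibniz splitting must be paired with the correct image term on the right-hand side of the matrix identity.
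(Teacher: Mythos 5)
Your proposal follows the paper's proof exactly: a double induction driven by the matrix identity $HC=BH$, with the base case checked against the explicit formulas for $H_2^1$ and $H_3^1$, and the inductive step carried out by applying the Leibniz rule to the Totaro factors of the proposed cochain and matching the resulting splittings against the terms $\beta_{n-k+1}H_n^{k-1}$, $H_{n-1}^{k-1}\alpha_n$, $H_{n-j}^{k-j}h_n^j$, and $f_{n-k}h_n^k$ coming from the matrix equation (the paper compresses this matching into the remark that ``all terms involving $a$'s or $b$'s cancel''). One small slip in your bookkeeping: differentiating a Totaro factor of length $m$ splits it into two factors whose reduced lengths sum to $m-2$, so the Leibniz terms have total reduced length $k-1$ rather than $k$ --- which is in fact exactly why they can be matched against the reduced-length-$(k-1)$ terms on the right-hand side, as your subsequent matching correctly assumes.
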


\begin{proof}
We do not give the complete proof, as it is very similar to the proof of Proposition~\ref{CompMasseyProp}. Again the argument is by a double induction. The base case is easy to verify, and for the induction step one uses the equation
\[dH^i_j=(-1)^{i+j}\beta_{j-i+1}H^{i-1}_j+H^i_jd+(-1)^j\left(H^{i-1}_{j-1}\alpha_j+\dots+f_{j-i}h^i_j\right),\]
which once again comes from the matrix equation $HC=BH$. One then checks that all terms involving $a$'s or $b$'s cancel.
\end{proof}

\begin{eg} As in Example~\ref{EgCompMasseyProp}, we give some examples to illustrate the result. The proposition yields the equations
\[\begin{split}
H_4^2\bigl([U][1-V][W][1-X]\bigr) &=-T_{U,1-U}T_{W,1-W}-T_{U,1-U,U}[1-X] \\
 & \hspace{5em} -[U]T_{1-V,V,1-V} \\
H_4^3\bigl([U][1-V][W][1-X]\bigr) &=-T_{U,1-U,U,1-U} \\
H_5^2\bigl([U][1-V][W][1-X][1-Y]\bigr) &=-T_{U,1-U}T_{W,1-W}[1-Y] \\
& \hspace{-5em}-T_{U,1-U}[W]T_{1-X,1-X}-[U]T_{1-V,V}T_{1-X,1-X} \\
&\hspace{-5em}-T_{U,1-U,U}[1-X][1-Y]-[U]T_{1-V,V,1-V}[1-Y] \\
& \hspace{3em} -[U][1-V]T_{W,1-W,1-W} \\
H_5^3\bigl([U][1-V][W][1-X][1-Y]\bigr) &=T_{U,1-U}T_{W,1-W,1-W}+T_{U,1-U,U}T_{1-U,1-U} \\
 & \hspace{-5em}+T_{U,1-U,U,1-U}[1-Y]+[U]T_{1-V,V,1-V,1-V}, \text{ and} \\
 H_5^4\bigl([U][1-V][W][1-X][1-Y]\bigr) &=T_{U,1-U,U,1-U,1-U}
\end{split}\]
\end{eg}

The above proposition directly implies the following theorem.


\begin{thm}
In order to build the $n$-truncated motivic fundamental group ring of $X=\bP^1-\{0,1,\infty\}$ for general $n$, it suffices that the iterated Massey products$\langle \Phi^1_t,\dots,\Phi^k_t\rangle$ in 
\[\rH^{2,n}(X;\Q)\iso\CH^n(X,2n-2)\otimes\Q\]
are defined and contain zero.
\end{thm}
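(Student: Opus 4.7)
The plan is to assemble the cell module directly from the formulas of the two preceding propositions, verifying that the bidegrees of the required Totaro cycles are exactly those asserted and that nothing further is needed beyond the hypothesis on Massey products.

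First I would set up the full commuting-up-to-homotopy diagram
\[
\xymatrix{
C_n \ar[r]^{\alpha_n} \ar[d]_{f_n} & C_{n-1} \ar[r] \ar[d]_{f_{n-1}} & \cdots \ar[r] & C_0 \ar@{=}[d] \\
B_n \ar[r]_{\beta_n} & B_{n-1} \ar[r] & \cdots \ar[r] & B_0,
}
\]
where the $C_i$ are the free $\cA$-modules obtained by iterated Mayer-Vietoris applied to the $n=2,3$ cases in \S\ref{n2case} and \S\ref{n3case}, generators labeled $\Phi^{1}_{U_1}\cdots\Phi^{i}_{U_i}$, and $\alpha_i$ defined by the evident restriction-style formulas (the $f_i$ being Mayer-Vietoris quasi-isomorphisms). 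The strategy is then to build all secondary homotopies $h_n^k$ and $H_n^k$ and invoke Lemma~\ref{cofiber} iteratively so that the induced map on totalizations $\Tot(C_\bullet)\to\Tot(B_\bullet)$ is a quasi-isomorphism, producing the desired cellular approximation.

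Second, the two preceding propositions already supply explicit formulas for every $h_n^k$ and $H_n^k$ purely in terms of generalized Totaro cycles $T_{\Phi^{i}_{U_i},\dots,\Phi^{j}_{U_j}}$. Here I would note that each such Totaro cycle appearing is a bounding cochain for some iterated Massey product $\langle\Phi^{i}_{U_i},\dots,\Phi^{j}_{U_j}\rangle$ evaluated either at $U_i=a$, at $U_i=b$, or at the restriction to a diagonal $\Delta_X$ (where the coordinate is a parameter $t$). Since $X=\bP^1-\{0,1,\infty\}$ is defined over the prime field and all the formulas are pulled back from the universal case where $U_i$ is the coordinate $t$ on $X$, it suffices that the Massey products $\langle\Phi^{1}_t,\dots,\Phi^{k}_t\rangle$ be defined and contain zero in $\rH^{*,*}(X;\Q)$ for $2\leq k\leq n-1$. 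Counting bidegrees: a $k$-fold Massey product of classes in $\rH^{1,1}$ lies in $\rH^{2,k}(X;\Q)$, which via homotopy invariance and the Suslin-Voevodsky identification becomes $\CH^k(X,2k-2)\otimes\Q$, accounting for the stated target group.

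Third, with all $h_n^k$ and $H_n^k$ in hand, I would verify by induction on $n$ (and within each $n$ by induction on $k$) that the two matrix equations in \S\ref{n3case} giving the differentials on the cell modules $C = \Tot(C_\bullet)$ and $B = \Tot(B_\bullet)$, and the compatibility $HC=BH$, are satisfied. The base cases $n=2,3$ are done explicitly in \S\ref{n2case}--\S\ref{n3case}; the inductive step is exactly the cancellation identity used at the end of the proof of Proposition~\ref{CompMasseyProp}, namely that mixed terms pairing a Totaro cycle at $a$ with one at $b$ cancel because $h^{k-j}_{n-j}h^{j}_n$ and $h^{j}_{n-k+j}h^{k-j}_n$ appear with opposite signs in the matrix product. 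Given these identities, repeated application of Lemma~\ref{cofiber} on the successive cofibers of the filtration of $\Tot(B_\bullet)$ by rows produces a map of cell modules that is a quasi-isomorphism on each associated graded piece (by construction of the $f_i$), and therefore is itself a quasi-isomorphism.

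The main obstacle I expect is bookkeeping: keeping track of the precise signs $\lambda=n(k-1)+k(k-1)/2$ and $\mu=nk+k(k-1)/2$ while verifying that the $(n-1)$-fold sum appearing in the matrix identity $HC=BH$ cancels down to a single representative of $\langle\Phi^{1}_t,\dots,\Phi^{k}_t\rangle$. Everything else is a formal consequence of the propositions, together with the standard fact that iterated Mayer-Vietoris furnishes free $\cA$-module resolutions of $\cA(X^n)$ compatible with the restriction maps to diagonals and basepoints.
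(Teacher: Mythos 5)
Your proposal follows the paper's own route: the theorem is stated there as a direct consequence of the two preceding propositions giving explicit formulas for the homotopies $h_n^k$ and $H_n^k$ in terms of generalized Totaro cycles, and you assemble exactly those ingredients via the matrix identity $HC=BH$ and iterated application of Lemma~\ref{cofiber}. The only quibble is your stated range $2\leq k\leq n-1$: since $H_n^{n-1}$ involves a Totaro cycle of length $n$ (e.g.\ $T_{U,1-U,U,1-U,1-U}$ for $n=5$), one in fact needs the $n$-fold Massey products $\langle\Phi^1_t,\dots,\Phi^n_t\rangle$ landing in $\rH^{2,n}(X;\Q)$, which is precisely the group named in the statement.
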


Note that the fact that the $n$-fold iterated Massey product are defined implies in particular that the $k$-fold iterated Massey products are defined and contain zero for $k<n$.


\subsection{Trimming the fat}\label{TrimFatSect}

Rather than just obtaining a cellular approximation to the complex~(\ref{theMotivicComplex}), one would like to obtain an approximation which is a minimal $\cA$-module (\S\ref{MinModSect}). In order to do this, we must first ensure that $\cA$ is minimal, meaning that the differential lands in the decomposable elements. 
By Theorem~IV.2.4 of \cite{KM}, there is a quasi-isomorphism $\varphi:\cA_{min}\xrightarrow{\sim}\cA$, and the minimal model $\cA_{min}$ can be chosen so that $\cA_{min}^{1,1}=k^\times\otimes\Q$.
By Proposition~\ref{DerivedInvarProp} the pullback functor $\varphi^*:\sD(\cA)\to\sD(\cA_{min})$ is an equivalence of triangulated categories, so that the $t$-structure is carried along as well. 

The pullback $\varphi^*(C_i)$ of each free $\cA$-module will not be a free $\cA_{min}$-module, but we may replace each $\varphi^*(C_i)$ by a free $\cA_{min}$-module on generators in appropriate degrees. 
Since the maps $\alpha_n:C_n\to C_{n-1}$ send the generator of a cell to a sum of generators of cells and cocycles $[a]$ and $[b]$, and since $\cA_{min}^{1,1}=k^\times\otimes\Q$, it is clear how to define a complex of free $\cA_{min}$-modules modeling the complex $C_*$ of free $\cA$-modules.
We thus assume from now on that $\cA$ is a minimal dga.

 The explicit cellular approximations we have obtained lend themselves easily to finding minimal cellular approximations. We do not go into full details, but the resulting picture is 

\centerline{\xymatrix {
B_n \ar[r] & B_{n-1} \ar[r] & \dots \ar[r] & B_1 \ar[r] & B_0 \\
C_n \ar[r] \ar[u]^{\sim} \ar@{->>}[d] & C_{n-1} \ar[r] \ar[u]^{\sim} \ar@{->>}[d] & \dots \ar[r] & C_1 \ar[r] \ar[u]^{\sim} \ar@{->>}[d] & C_0 \ar@{=}[u] \ar@{->>}[d] \\
\cA\{-n\}^{\oplus 2^n} \ar[r] & \cA\{-(n-1)\}^{\oplus 2^{n-1}} \ar[r] & \dots \ar[r] & \cA\{-1\}^{\oplus 2} \ar[r] & \cA.
}}

\noindent The homotopies $h_i^k$ descend to the quotients and allow one to build a minimal cell module out of 
\[\cA\{-n\}^{\oplus 2^n}\to\dots\to\cA\{-1\}^{\oplus 2}\to\cA,\]
 and the epimorphisms \mbox{$C_i\twoheadrightarrow\cA\{-i\}^{2^i}$} assemble to give a quasi-isomorphism of cell modules. Ignoring the differential, the minimal module is given by 
 \[\cA(-n)^{\oplus 2^n}\oplus\cA(-n+1)^{\oplus 2^{n-1}}\oplus\dots\oplus \cA(-1)^{\oplus 2}\oplus \cA.\]
 In particular, it lives in $\cH=\sD(\cA)^{\geq 0}\cap\sD(\cA)^{\leq 0}$ and thus defines a mixed Tate motive.

\begin{eg} Let $n=1$. The cell module is the cofiber of 
\[\cA\{-1\}^{\oplus 2}\oplus \cA\to \cA^{\oplus 2}.\]
To obtain a minimal module, it suffices to kill the summand $\cA$ on the left, together with its image. Forgetting the differential, the minimal module is given by $\cA\oplus\cA(-1)^2$. The differential is given by the matrix
\[\left(\begin{array}{ccc} d & [b]-[a] & [1-b]-[1-a] \\ 0 & d & 0 \\ 0 & 0 & d
\end{array}\right)\]
In fact, letting $M_1$ denote the minimal module, we have an extension
\[\cA\to M_1\to \cA(-1)^{\oplus 2}\]
in $MTM_{KM}$.
\end{eg}

\begin{eg} Let $n=2$. The cell module is built from

\centerline{\xymatrix  @R=3ex {
\cA\{-2\}^{\oplus 4}\oplus\cA\{-1\}^{\oplus 4}\oplus\cA \ar[r] \ar@{=}[d] &\left(\cA\{-1\}^{\oplus 2}\oplus\cA\right)^{\oplus 3} \ar[r] \ar@{=}[d] & \cA^{\oplus 3}.  \ar@{=}[d] \\
C_2 & C_1 & C_0
}}

\noindent To build the associated minimal module, we first quotient out the copy of $\cA$ in $C_2$, which kills one of the copies in $C_1$. We then quotient the remaining copies of $\cA$ in $C_2$, which leaves only one copy in $C_0$. Lastly, we factor out the copies of $\cA\{-1\}$ in $C_2$, which leaves two copies in $C_1$. This produces a minimal module which, ignoring the differential, is given by $\cA\oplus\cA(-1)^{\oplus 2}\oplus\cA(-2)^{\oplus 4}$. Choosing the basis 
\[\{1, [U],[1-U],[U][V],[U][1-V],[1-U],[V],[1-U][1-V],\}\]
we have that the differential is given by the matrix in Figure~\ref{n2matrix}.
\begin{figure}
\[\left(\begin{array}{cccccccc} 
d & [b]-[a] & \begin{array}{c}[1-b] \\ \hspace{2ex}-[1-a]\end{array} & 0 & \begin{array}{c}T_{b,1-b} \\ \hspace{2ex}-T_{a,1-a}\end{array} & \begin{array}{c}T_{1-b,b} \\ \hspace{2ex}-T_{1-a,a} \end{array} & 0 \\
0 & d & 0 & [a]-[b] & -[1-b] & [1-a] & 0 \\
0 & 0 & d & 0 & [a] & -[b] & \begin{array}{c}[1-a] \\ \hspace{2ex}-[1-b]\end{array} \\
0 & 0 & 0 & d & 0 & 0 & 0 \\
0 & 0 & 0 & 0 & d & 0 & 0 \\
0 & 0 & 0 & 0 & 0 & d & 0 \\
0 & 0 & 0 & 0 & 0 & 0 & d
\end{array}\right)\]
\caption{The differential for the minimal module, $n=2$.}\label{n2matrix}
\end{figure}
\end{eg}

\begin{eg} One can give a similar description of the minimal module when $n=3$. Forgetting the differential, it is given by 
\[\cA\oplus\cA(-1)^{\oplus 2}\oplus\cA(-2)^{\oplus 4}\oplus\cA(-3)^{\oplus 8}.\]
We extend our previously chosen basis to a basis here by taking the elements 
\[\begin{split}
[U][V][W], &[U][V][1-W], [U][1-V][W], [U][1-V][1-W], [1-U][V][W], \\
&[1-U][V][1-W], [1-U][1-V][W], [1-U][1-V][1-W].\end{split}\]
 The differential is then given by the matrix in Figure~\ref{n3matrix}.
\begin{sidewaysfigure}
\scalebox{0.75}{
\addtolength{\arraycolsep}{-0.6ex}
$\left(\begin{array}{ccccccccccccccc} 
d \vspace{2ex} & [b]-[a] & \begin{array}{c}[1-b] \\ \hspace{2ex}-[1-a]\end{array} & 0 & \begin{array}{c}T_{b,1-b} \\ \hspace{2ex}-T_{a,1-a}\end{array} & \begin{array}{c}T_{1-b,b} \\ \hspace{2ex}-T_{1-a,a} \end{array} & 0 & 0 & \begin{array}{c} T_{a,a,1-a} \\ \hspace{-1ex}-T_{b,b,1-b}\end{array} & \begin{array}{c} T_{a,1-a,a} \\ \hspace{-1ex}-T_{b,1-b,b} \end{array} & \begin{array}{c} T_{a,1-a,1-a} \\ \hspace{-1ex}-T_{b,1-b,1-b}\end{array} & \begin{array}{c} T_{1-a,a,a} \\ \hspace{-1ex}-T_{1-b,b,b}\end{array} & \begin{array}{c} T_{1-a,a,1-a} \\ \hspace{-1ex} -T_{1-b,b,1-b} \end{array} & \begin{array}{c} T_{1-a,1-a,a} \\ \hspace{-1ex}-T_{1-b,1-b,b} \end{array} & 0 \\
0 \vspace{2ex}  & d & 0 & [a]-[b]  & -[1-b] & [1-a] & 0 & 0 & T_{b,1-b} & \begin{array}{c} T_{1-b,b} \\ \hspace{-1ex}-T_{a,1-a} \end{array} & 0 & T_{1-a,a} & 0 & 0 & 0 \\
0 \vspace{2ex}   & 0 & d & 0 & [a] & -[b] & \begin{array}{c}[1-a]  \\ \hspace{2ex}-[1-b]\end{array} & 0 & 0 & 0 & -T_{a,1-a} & 0 & \begin{array}{c} T_{b,1-b} \\ \hspace{-1ex}-T_{1-a,a} \end{array} & T_{1-b,b} & 0 \\
0 \vspace{2ex}   & 0 & 0 & d & 0 & 0 & 0 & [b]-[a] & [1-b] & 0 & 0 & -[1-a] & 0 & 0 & 0 \\
0 \vspace{2ex}   & 0 & 0 & 0 & d & 0 & 0 & 0 & -[a] & [b] & [1-b] & 0 & -[1-a] & 0 & 0 \\
0 \vspace{2ex}   & 0 & 0 & 0 & 0 & d & 0 & 0 & 0 & -[a] & 0 & [b] & [1-b] & -[1-a] & 0 \\
0 \vspace{2ex} & 0 & 0 & 0 & 0 & 0 & d & 0 & 0 & 0 & -[a] & 0 & 0 & [b] & \begin{array}{c}[1-b] \\ \hspace{2ex}-[1-a]\end{array} \\
0 \vspace{2ex}  & 0 & 0 & 0 & 0 & 0 & 0 & d & 0 & 0 & 0 & 0 & 0 & 0 & 0 \\
0 \vspace{2ex}  & 0 & 0 & 0 & 0 & 0 & 0 & 0 & d & 0 & 0 & 0 & 0 & 0 & 0 \\
0 \vspace{2ex}  & 0 & 0 & 0 & 0 & 0 & 0 &  0 & 0 & d & 0 & 0 & 0 & 0 & 0 \\
0 \vspace{2ex}   & 0 &  0 & 0 & 0 & 0 & 0 & 0 & 0 & 0 & d & 0 & 0 & 0 & 0 \\
0 \vspace{2ex}  & 0 & 0 & 0 & 0 & 0 & 0 &  0 & 0 & 0 & 0 & d & 0 & 0 & 0 \\
0 \vspace{2ex}  & 0 & 0 & 0 & 0 & 0 & 0 &  0 & 0 & 0 & 0 & 0 & d & 0 &  0 \\
0 \vspace{2ex}  & 0 &  0 & 0 & 0 & 0 & 0 & 0 & 0 & 0 & 0 & 0 & 0 & d & 0 \\
0 & 0 & 0 & 0 &  0 & 0 & 0 & 0 & 0 & 0 & 0 & 0 & 0 & 0 & d
\end{array} \right) $ }
\caption{The differential for the minimal module, $n=3$.}\label{n3matrix}
\end{sidewaysfigure}
\end{eg}

\subsection{Vanishing of Massey Products}\label{VanshMssySect}

Recall that we have the following noncommutative diagram of dg-$\cA$-modules:

\centerline{\xymatrix {
B_n \ar[r] & B_{n-1} \ar[r] & \dots \ar[r] & B_1 \ar[r] & B_0 \\
C_n \ar[r] \ar[u]^{\sim} & C_{n-1} \ar[r] \ar[u]^{\sim} & \dots \ar[r] & C_1 \ar[r] \ar[u]^{\sim} & C_0. \ar@{=}[u] \\
}}

\noindent The object in which we have been interested is the totalization, or convolution in the sense of \S{IV.2} of \cite{GM},
of the complex $B_*$ in the triangulated category $\sD(\cA)$. Since $B_*$ is an honest complex of dg-$\cA$-modules, we can produce a totalization by taking the total complex of the associated double complex. We have approximated $B_*$ by $C_*$, in the sense that the vertical maps are quasi-isomorphisms. Thus $C_*$ is isomorphic to $B_*$ in $\sD(\cA)$, and it follows that $C_*$ has a totalization. At this point, we are using commutativity of the diagram in $\sD(\cA)$, which only requires the homotopies $H^1_i$. Recall from above that the ordinary Steinberg relation suffices to define these.

It is well-known (cf. \S{IV.2} of \cite{GM}) that $C_*$ has a totalization if and only if the Toda bracket of the maps in $C_*$ is defined and contains $0$. 
Since $\sD(\cA)\simeq h\Cell_{\cA}$ (Theorem~\ref{CellEqDer}), 
Toda brackets in $\sD(\cA)$ can be computed as compositional Massey products (\cite{BoKa}). We thus have that these compositional Massey products contain $0$.
Finally, our Proposition~\ref{CompMasseyProp} gives an explicit description of these compositional Massey products in terms of Massey products of the Steinberg symbols $[t]$ and $[1-t]$. Despite the above discussion, it does not follow that the Massey products of Steinberg symbols are defined and contain zero; rather, we can only conclude that {\em some} representative for the compositional Massey product is null.

Note also that if the ground field $k$ is a number field, 
then according to Example~\ref{CohomNumFldEg} the groups $\rH^{2,*}(\Spec k;\Q)$ vanish. This guarantees that a Massey product of $[a]$ and $[1-a]$ will vanish {\em as long as it is defined}.


\subsection{Bloch-Totaro cycles}\label{hightotsec}

Here and in \S\ref{34MasseyProds},
we discuss bounding cochains for the Massey products of $[a]$'s and $[1-a]$'s. For this purpose it is more convenient to work with Bloch's alternating cycle complex (\S\ref{altcyclsec}) $A$ rather than the Suslin-Voevodsky variant $\cA$. The construction of cell modules, and therefore also of minimal modules, given above is equally valid over $A$ as over $\cA$, so there is no loss in making the transition.

Recall that in his thesis \cite{T}, Totaro wrote down cochains $T_{a,1-a}\in A^{1,1}$ bounding the elements $[a]\cdot[1-a]$. In \S\ref{BlchCyclSect}, we wrote $\square=\A^1$, but for the purposes of this section it is convenient to use the identification $\A^1\iso \bP^1-\{1\}$ given by $x\mapsto \frac{x}{x-1}$. We thus agree to write $\square$ for $\bP^1-\{1\}$ from now on. Note that under this identification, the map $k^\times\to A^{1,1}$ is given by $a\mapsto a\subset\square$. 

Recall (\S\ref{altcyclsec}) that $A^{p,q}$ is a quotient of the $\Q$-vector space with generators given by closed, irreducible, admissible, codimension $q$ subvarieties $W\subseteq \square^{2q-p}$. The Totaro cycle $T_{a,1-a}\in A^{1,2}$ is given by the subvariety
\[ \left[x,1-x,1-\frac{a}x\right]\subseteq\square^3.\]
Later, Bloch (\cite{B2}, \S3) generalized this construction to give cycles
\[ \rho_n(a)=\left[x_1,1-x_1,1-\frac{x_2}{x_1},x_2,1-\frac{x_3}{x_2},\dots,
x_{n-1},1-\frac{a}{x_{n-1}}\right]\subseteq\square^{2n-1}\]
in $A^{1,n}$. One has $\rho_2(a)=T_{a,1-a}$ and $d\rho_n(a)=\rho_{n-1}(a)\cdot a$. Thus we see that $\rho_n(a)$ is a model for $T_{a,a,\dots,a,1-a}$, a bounding cochain for the Massey product $\langle a, a, \dots, 1-a\rangle$. 

\subsection{$3$-fold and $4$-fold Massey products}\label{34MasseyProds}

As the automorphism $z\mapsto 1-z$ of
\mbox{$\bP^1-\{0,1,\infty\}$} 
takes $[t]$ to $[1-t]$, it suffices to consider Massey products whose first factor is $[a]$. 

For $n=3$ the Massey product $\langle a, a, a\rangle$ is trivially zero, and we have seen that the Bloch-Totaro cycle gives a bounding cochain $T_{a,a,1-a}$ for $\langle a, a, 1-a\rangle$. We similarly have a bound for $\langle a, 1-a, 1-a\rangle=-\langle 1-a, 1-a, a\rangle$. Finally, $\langle a, 1-a, a\rangle $ is represented by a cocyle of the form 
\[T_{a,1-a}[a]+[a]T_{1-a,a}=-2[a]T_{a,1-a},\]
 so $-2T_{a,a,1-a}$ bounds $\langle a, 1-a, a\rangle$.

For $n=4$ the Massey product $\langle a, a, a, 1-a\rangle$ is again killed by a Bloch-Totaro cycle $T_{a,a,a,1-a}$. The relation $T_{a,1-a,a}=-2T_{a,a,1-a}$ shows that $-3[a]T_{a,a,1-a}$ represents the Massey product $\langle a, a, 1-a, a\rangle$, so that again the Bloch-Totaro cycle can be used to bound it. Symmetry gives 
\[\langle a, 1-a, a, a\rangle =-\langle a, a, 1-a, a\rangle\] 
and 
\[\langle a, 1-a, 1-a, a\rangle=-\langle a, 1-a, 1-a, a\rangle,\]
so that the latter is trivially zero. Of the remaining cases, 
\[\langle a, 1-a, 1-a, 1-a\rangle =-\langle 1-a, 1-a, 1-a, a\rangle\]
is killed by a Bloch-Totaro cycle, and 
\[\langle a, 1-a, a, 1-a\rangle = 2\langle a, a, 1-a, 1-a\rangle,\]
so it remains only to deal with $\langle a, a, 1-a, 1-a\rangle$. One possible representative for the latter Massey product is given by
\begin{equation}\begin{split}
&\frac12 \left[1-\frac{a}t,t,1-\frac{t}u,u,1-u\right]\cdot[1-a] \\
&+ \frac12\left[1-\frac{1-a}{1-t},t,1-\frac{1-t}{1-u},u,1-u\right]\cdot[1-a] \\
&+ \frac12[a]\cdot\left[1-\frac{1-a}t,1-\frac{t}u,1-u,u,t\right] \\
&+ \frac12[a]\cdot\left[1-\frac{a}{1-t},1-\frac{1-t}{1-u},1-u,u,t\right].
\end{split}\notag\end{equation}
This defining cocycle was found by using the methodology of \cite{GGL} and \cite{FJ}, but we have not been able to bound this Massey product using their methods. We have hope that the ``binary'' case of \cite{Jaf} will produce bounding cochains for the above $n$-fold Massey products, though at present the algebraic cycles that are produced in loc. cit. do not all satisfy the admissibility requirement.






\bibliographystyle{alphanum}
\bibliography{ThesisRefs}

\begin{thebibliography}{MVW}

\bibitem[BBD]{BBD}
A.~A. Be{\u\i}linson, J.~Bernstein, and P.~Deligne.
\newblock Faisceaux pervers.
\newblock In {\em Analysis and topology on singular spaces, I (Luminy, 1981)},
  volume 100 of {\em Ast\'erisque}, pages 5--171. Soc. Math. France, Paris,
  1982.

\bibitem[BK1]{BK}
Spencer Bloch and Igor K{\v{r}}{\'{\i}}{\v{z}}.
\newblock Mixed {T}ate motives.
\newblock {\em Ann. of Math. (2)}, 140(3):557--605, 1994.

\bibitem[BK2]{BoKa}
A.~I. Bondal and M.~M. Kapranov.
\newblock Framed triangulated categories.
\newblock {\em Mat. Sb.}, 181(5):669--683, 1990.

\bibitem[Blo1]{B1}
Spencer Bloch.
\newblock Algebraic cycles and higher {$K$}-theory.
\newblock {\em Adv. in Math.}, 61(3):267--304, 1986.

\bibitem[Blo2]{B2}
Spencer Bloch.
\newblock Algebraic cycles and the {L}ie algebra of mixed {T}ate motives.
\newblock {\em J. Amer. Math. Soc.}, 4(4):771--791, 1991.

\bibitem[Blo3]{B}
Spencer Bloch.
\newblock Letter to {H}. {E}snault and {M}. {L}evine.
\newblock December 2006.

\bibitem[BMS]{BMS}
A.~Be{\u\i}linson, R.~MacPherson, and V.~Schechtman.
\newblock Notes on motivic cohomology.
\newblock {\em Duke Math. J.}, 54(2):679--710, 1987.

\bibitem[Bor]{Bor}
Armand Borel.
\newblock Stable real cohomology of arithmetic groups.
\newblock {\em Ann. Sci. \'Ecole Norm. Sup. (4)}, 7:235--272 (1975), 1974.

\bibitem[Del]{DelP1}
P.~Deligne.
\newblock Le groupe fondamental de la droite projective moins trois points.
\newblock In {\em Galois groups over $\mathbb{Q}$ (Berkeley, CA, 1987)},
  volume~16 of {\em Math. Sci. Res. Inst. Publ.}, pages 79--297. Springer, New
  York, 1989.

\bibitem[DG]{DG}
Pierre Deligne and Alexander~B. Goncharov.
\newblock Groupes fondamentaux motiviques de {T}ate mixte.
\newblock {\em Ann. Sci. \'Ecole Norm. Sup. (4)}, 38(1):1--56, 2005.

\bibitem[EL]{EL}
H.~Esnault and M.~Levine.
\newblock {Tate motives and the fundamental group}.
\newblock {\em Arxiv preprint arXiv:0708.4034}, 2007.

\bibitem[FJ]{FJ}
Hidekazu Furusho and Amir Jafari.
\newblock Algebraic cycles and motivic generic iterated integrals.
\newblock {\em Math. Res. Lett.}, 14(6):923--942, 2007.

\bibitem[FS]{FS}
Eric~M. Friedlander and Andrei Suslin.
\newblock The spectral sequence relating algebraic {$K$}-theory to motivic
  cohomology.
\newblock {\em Ann. Sci. \'Ecole Norm. Sup. (4)}, 35(6):773--875, 2002.

\bibitem[FV]{FV}
Eric~M. Friedlander and Vladimir Voevodsky.
\newblock Bivariant cycle cohomology.
\newblock In {\em Cycles, transfers, and motivic homology theories}, volume 143
  of {\em Ann. of Math. Stud.}, pages 138--187. Princeton Univ. Press,
  Princeton, NJ, 2000.

\bibitem[GGL]{GGL}
H.~Gangl, A.B. Goncharov, and A.~Levin.
\newblock {Multiple polylogarithms, polygons, trees and algebraic cycles}.
\newblock {\em Arxiv preprint math.NT/0508066}, 2005.

\bibitem[GM]{GM}
Sergei~I. Gelfand and Yuri~I. Manin.
\newblock {\em Methods of homological algebra}.
\newblock Springer Monographs in Mathematics. Springer-Verlag, Berlin, second
  edition, 2003.

\bibitem[Jaf]{Jaf}
A.~Jafari.
\newblock {Algebraic cycles and motivic iterated integrals II}.
\newblock {\em Arxiv preprint arXiv: 0806.4637}, 2008.

\bibitem[KM]{KM}
Igor K{\v{r}}{\'{\i}}{\v{z}} and J.~P. May.
\newblock Operads, algebras, modules and motives.
\newblock {\em Ast\'erisque}, (233):iv+145pp, 1995.

\bibitem[Lev1]{L1}
Marc Levine.
\newblock Bloch's higher {C}how groups revisited.
\newblock {\em Ast\'erisque}, (226):10, 235--320, 1994.
\newblock $K$-theory (Strasbourg, 1992).

\bibitem[Lev2]{L2}
Marc Levine.
\newblock Mixed motives.
\newblock In {\em Handbook of $K$-theory. Vol. 1, 2}, pages 429--521. Springer,
  Berlin, 2005.

\bibitem[May]{M}
J.~P. May.
\newblock Matric {M}assey products.
\newblock {\em J. Algebra}, 12:533--568, 1969.

\bibitem[MVW]{MVW}
Carlo Mazza, Vladimir Voevodsky, and Charles Weibel.
\newblock {\em Lecture notes on motivic cohomology}, volume~2 of {\em Clay
  Mathematics Monographs}.
\newblock American Mathematical Society, Providence, RI, 2006.

\bibitem[NS]{NS}
Yu.~P. Nesterenko and A.~A. Suslin.
\newblock Homology of the general linear group over a local ring, and
  {M}ilnor's {$K$}-theory.
\newblock {\em Izv. Akad. Nauk SSSR Ser. Mat.}, 53(1):121--146, 1989.

\bibitem[Sus]{S}
Andrei~A. Suslin.
\newblock Higher {C}how groups and etale cohomology.
\newblock In {\em Cycles, transfers, and motivic homology theories}, volume 143
  of {\em Ann. of Math. Stud.}, pages 239--254. Princeton Univ. Press,
  Princeton, NJ, 2000.

\bibitem[Tot]{T}
Burt Totaro.
\newblock Milnor {$K$}-theory is the simplest part of algebraic {$K$}-theory.
\newblock {\em $K$-Theory}, 6(2):177--189, 1992.

\end{thebibliography}

\end{document}